
\documentclass[11pt]{amsart}

\usepackage{amsmath, amssymb, amsthm, amsfonts, bbm, dsfont}
\usepackage{graphicx}

\usepackage{hyperref}
\RequirePackage{doi}

\usepackage{a4wide}

\numberwithin{equation}{section}
\theoremstyle{plain}
\newtheorem{theorem}[subsubsection]{Theorem}
\newtheorem{theoremc}[subsection]{Theorem*}
 \newtheorem{lemma}[subsubsection]{Lemma}
 \newtheorem{prop}[subsubsection]{Proposition}
 \newtheorem{cor}[subsubsection]{Corollary}
 \newtheorem{conj}[subsubsection]{Conjecture}

 \theoremstyle{definition}

\newtheorem{remark}[subsubsection]{Remark}
\newtheorem{exam}[subsubsection]{Example}


\usepackage{hyperref}
\usepackage{pictexwd}
\usepackage{stackengine}
\usepackage{mathtools}
\usepackage{amssymb}
\usepackage{dsfont}
\usepackage{mathrsfs}
\usepackage{mathabx}



\newcommand{\rmL}{\mathrm{L}}


\newcommand{\CC}{\mathbb{C}}
\newcommand{\cc}{\mathbb{C}}

\newcommand{\QQ}{\mathbb{Q}}
\newcommand{\ZZ}{\mathbb{Z}}
\newcommand{\zz}{\mathbb{Z}}
\newcommand{\bbS}{\mathbb{S}}



\newcommand{\rmk}{\mathrm{K}}

\newcommand{\calF}{\mathcal{F}}
\newcommand{\calf}{\mathcal{F}}
\newcommand{\calG}{\mathcal{G}}
\newcommand{\calg}{\mathcal{G}}
\newcommand{\calD}{\mathcal{D}}
\newcommand{\calO}{\mathcal{O}}
\newcommand{\calR}{\mathcal{R}}
\newcommand{\calS}{\mathcal{S}}
\newcommand{\calz}{\mathcal{Z}}
\newcommand{\cb}{\mathcal{B}}
\newcommand{\calB}{\mathcal{B}}
\newcommand{\calL}{\mathcal{L}}

\newcommand{\frg}{\mathfrak{g}}

\newcommand{\frb}{\mathfrak{b}}

\newcommand{\frn}{\mathfrak{n}}


\newcommand{\scz}{\mathscr{Z}}
\newcommand{\scX}{\mathscr{X}}
\newcommand{\scc}{\mathscr{C}}
\newcommand{\scC}{\mathscr{C}}
\newcommand{\scZ}{\mathscr{Z}}

\newcommand{\tilM}{\widetilde{M}}

\newcommand{\tilD}{\widetilde{D}}




\newcommand{\Ind}{\textup{Ind}}
\newcommand{\ind}{\textup{ind}}

\newcommand\Lie{\textup{Lie}\ }

\newcommand\Mod{\textup{Mod}}

\newcommand\Spec{\mathop{Spec}\ }

\newcommand\st{\textup{st}}

\newcommand{\Tr}{\textup{Tr}}
\newcommand{\tr}{\textup{tr}}

\newcommand\Hom{\textup{Hom}}

\newcommand{\ot}{\otimes}
\newcommand{\ti}{\times}

\newcommand\GL{\textup{GL}}
\newcommand\gl{\mathfrak{gl}}

\newcommand{\Ad}{\textup{Ad}}


\newcommand{\bl}{\bullet}

\newcommand{\quash}[1]{}

\newcommand{\pt}{\textup{pt}}

\newcommand{\wt}{\widetilde}

\newcommand{\und}{\underline}

\newcommand{\ChE}{Chevalley-Eilenberg }
\newcommand{\Dr}{\mathrm{Dr}}
\newcommand{\dr}{\mathrm{Dr}}
\newcommand{\rlin}{\mathrm{lin}}


\newcommand{\Br}{\mathfrak{Br}}
\newcommand{\FT}{\mathrm{FT}}

\newcommand{\Coh}{\mathrm{Coh}}


\usepackage{tikz}
\usetikzlibrary{shapes,fit,intersections}
\usetikzlibrary{decorations.markings}
\usetikzlibrary{decorations.pathreplacing}
\usetikzlibrary{patterns}
\usetikzlibrary{matrix,arrows}
\usepgflibrary{decorations.pathmorphing}

\usepackage{tikz-cd}

\newcommand{\Wr}{\overline{W}}
\newcommand{\Hilb}{\textup{Hilb}}
\newcommand{\FHilb}{\mathrm{FHilb}}

\newcommand{\MFs}{\mathrm{MF}^{sc}}
\newcommand{\calXr}{\overline{\mathcal{X}}}
\newcommand{\calX}{\mathcal{X}}
\newcommand{\cx}{\mathcal{X}}
\newcommand{\MF}{\mathrm{MF}}
\newcommand{\calZ}{\mathcal{Z}}
\newcommand{\calC}{\mathcal{C}}
\newcommand{\calY}{\mathcal{Y}}
\newcommand{\Fl}{\mathrm{Fl}}

\newcommand{\frh}{\mathfrak{h}}

\newcommand{\CE}{\mathrm{CE}}
\newcommand{\HHH}{\mathrm{HHH}}

\newcommand{\odel}{\stackon{$\otimes$}{$\scriptstyle\Delta$}}



\newcommand{\HC}{\mathrm{HC}}
\newcommand{\CH}{\mathrm{CH}}


\newcommand{\fgt}{\mathrm{fgt}}


\def\rloc{\mathrm{loc}}

\def\rst{\mathrm{st}}

\def\rper{\mathrm{per}}

\def\gl{\mathfrak{gl}}



\thanks{The work of A.O. was supported in part by  the NSF CAREER grant DMS-1352398, the NSF FRG grant DMS-1760373 grant and Simons Foundation
Fellowship No.561855}

\title{Notes on matrix factorizations and knot homology}

\author{A. Oblomkov}
\address{
A.~Oblomkov\\
Department of Mathematics and Statistics\\
University of Massachusetts at Amherst\\
Lederle Graduate Research Tower\\
710 N. Pleasant Street\\
Amherst, MA 01003 USA
}
\email{oblomkov@math.umass.edu}

\begin{document} 
\maketitle
\begin{abstract}
  These are the notes of the lectures delivered by the author at CIME in June 2018. The main purpose of the
  notes is to provide an overview of the techniques used in the construction of the triply graded link homology.
  The homology is space of global sections of a particular sheaf on the Hilbert scheme of points on the plane.
  Our construction relies on existence on the natural push-forward functor for the equivariant matrix factorizations,
  we explain the subtleties on the construction in these notes. We also outline a proof of the Markov moves for
  our homology as well as some explicit localization formulas for knot homology of
  a large class of links.
  
\end{abstract}

\section{Introduction}
\label{sec:introduction}

The discovery of the knot homology   \cite{Khovanov00}   of the links in the three-sphere, motivated search for the homological
invariants of the three-manifolds. Heegard-Floer homology were discovered soon after Khovanov's seminal work, this homology
categorifies the simplest case of WRT invariants (the invariants at the fourth root of unity).  More general WRT invariants
are beyond of the reach of currently available technique. Thus it is very important to reveal as much structure of the
Khovanov homology as it is possible.

The mathematical construction of WRT invariants   relies on special properties  JW projectors at the root of unity, thus it is natural to
search for the analogues of the projectors in the knot homology theory. If the algebraic variety is endowed with the action of
the torus with the zero-dimensional locus, the algebraic geometry offer a natural decomposition of category of the coherent sheaves
into the mutually orthogonal pieces \cite{HalpernLeistner18}, hence we have a natural analog of the JW projectors. In the paper \cite{OblomkovRozansky18a}
we constructed a map from the braid group to the category of coherent sheaves on  the free Hilbert scheme of points on the plane such that
that Markov moves properties hold for the vector space of the global sections of the sheaf. Thus we have geometric candidate for  JW projectors for such knot homology.

The quest for the geometric interpretation of JW projectors was the main motivation for the author of the notes to develop the
connection between sheaves on the Hilbert scheme of points and knot homology. The localization type formulas were first
encountered by the author in the joint work with Jake Rasmussen and  Vivek Shende \cite{OblomkovRasmussenShende12} where the homology of the torus knots were connected
with the topology of the Hilbert schemes of points on the homogeneous plane singularities (see also \cite{GorskyOblomkovRasmussenShende14}).
However, back in 2012 it was a  total mystery to the author how one would expand the relation in \cite{OblomkovRasmussenShende12}, \cite{GorskyOblomkovRasmussenShende14} beyond the torus knots.

The connection was demystified by Lev Rozansky who was armed with the
physics intuition as well as very deep understanding of already existing knot homology theories. As it turned out the searched after
knot homology has a natural interpretation within the framework of the Kapustin-Saulina-Rozansky topological quantum
field theory for the cotangent bundles
to the Lie algebras  as targets \cite{OblomkovRozansky18b}. A purely mathematical theory underlying the physical predictions
is laid out in the series of our joint papers \cite{OblomkovRozansky16},\cite{OblomkovRozansky17},\cite{OblomkovRozansky17a},
\cite{OblomkovRozansky18},\cite{OblomkovRozansky18a}. To provide an introduction to the technique of these paper
is the main goal of this note.

\subsection{Main result}
\label{sec:main-result}

Let us state a consequence of the results from the papers that requires the minimal amount of new notations. We need some
notations, though. Throughout the paper we  use notation \(D^{per}_G(X)\) for the derived category of the two-periodic
\(G\)-equivariant complexes of coherent sheaves on \(X\), where \(G\) is a group acting on \(X\). For us particularly important  
case of the pair \(X,G\) is \(\Hilb_n(\CC^2),T_{sc}=\cc^*\ti\cc^*\) with scaling action of \(T_{sc}\) on \(\cc^2\). The dual
\(\mathcal{B}\) to the  universal
quotient bundle \(\calB^{\vee}\), \(\calB^\vee|_I=\cc[x,y]/I\) will be used in our construction of the knot homology.

We also use notation
\(\Br_n\) for the braid group on \(n\) strands. For an element \(\beta\in \Br_n\) we can form a link in the three-sphere
\(L(\beta)\) by closing the braid in  the most natural way.

\begin{theorem}\label{thm:HilbC2}\cite{OblomkovRozansky18a} There is a constructive procedure that assigns to a braid
  \(\beta\in \Br_n\) an object \(S_\beta\in D^{per}_{T_{sc}}(\Hilb_n(\cc^2))\) such that
  \begin{enumerate}
    \item \(S_{\beta\cdot \FT}=S_{\beta}\ot \det(\calB)\) where \(\FT\) is the full twist on \(n\) strands
  \item The triply graded vector space \(\HHH(\beta):=H^*(S_\beta\ot \Lambda^\bullet \calB)\) is an
    isotopy invariant of the closure \(L(\beta)\).
  \item  The character of representation of the anti-diagonal torus \(\cc^*_a\subset T_{sc}\) on the spaces
    \(H^*(S_\beta\ot \Lambda^i \calB)\) is the HOMFLYPT polynomial:
    \begin{equation}\label{eq:HOMFLY}
      \sum_i a^i\chi_q(\cc^*_a,H^*(S_\beta\ot \Lambda^i\calB))=\mathrm{HOMFLYPT}(L(\beta)).
    \end{equation}
  \end{enumerate}
  
\end{theorem}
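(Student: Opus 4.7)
The plan is to construct $S_\beta$ by first building a braid group action in a category of equivariant matrix factorizations on a finite dimensional affine model, and then transporting the resulting objects to $\Hilb_n(\CC^2)$ through a push-forward along a Hilbert–Chow-type map. Concretely, I would work in $\MF^{T_{sc}}_{G}(G \times B^2, W)$ for $G = \GL_n$ with Steinberg-type potential $W$ whose critical locus parametrizes pairs $(g, b_1, b_2)$ with $g b_1 = b_2 g$ and $g$ generically free, and assign to each braid generator $\sigma_i$ an explicit Koszul-type matrix factorization supported on the codimension-one Schubert cell. A key preliminary step is to check that these satisfy the braid relations up to homotopy; this is a local Soergel-bimodule-style verification, carried out strand by strand near each rank-two parabolic.

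Having a braid group homomorphism $\beta \mapsto \calC_\beta \in \MF^{T_{sc}}_{G}(G\times B^2, W)$, I would define $S_\beta \in D^{per}_{T_{sc}}(\Hilb_n(\CC^2))$ as $\pi_*\bigl(\calC_\beta \otimes K\bigr)$, where $\pi$ is the tautological map from the free locus of $G\times B^2$ (modulo the diagonal $B$-action) to $\Hilb_n(\CC^2)$, and $K$ is the Koszul complex that reduces the matrix factorization to a genuine two-periodic complex. The well-definedness of this push-forward on the free locus is exactly the subtlety flagged in the abstract, and the first technical step is to verify properness/$K$-theoretic finiteness of $\pi$ so that $\pi_*$ lands in the correct derived category and commutes with the $T_{sc}$-equivariant structure.

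For part (1), the full twist $\FT$ corresponds to the matrix factorization whose restriction to the diagonal computes the Koszul complex of the tautological ideal; one then identifies this contribution after $\pi_*$ with the determinant line bundle $\det(\calB)$, using $\calB^\vee|_I = \CC[x,y]/I$ and a direct inspection of the Chern character. For part (3), I would compute the character of $\cc^*_a$ on $H^*(S_\beta \otimes \Lambda^\bullet \calB)$ by passing through the trace map $\MF \to \text{Hecke}$ that realizes HOMFLYPT as the Markov trace, and checking compatibility of $\pi_*$ with this trace on the level of characters — the $\Lambda^\bullet \calB$ factor accounts exactly for the $a$-variable, and the $q$-grading comes from the $\cc^*_q$ factor of $T_{sc}$.

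The main obstacle is part (2), specifically Markov move II (stabilization). Markov I (conjugation invariance) will follow formally from the cyclicity of $\pi_*$ combined with the fact that the closure is encoded geometrically by passing from $G\times B^2$ to the adjoint quotient. Stabilization, however, requires comparing $S_{\beta\sigma_n^{\pm 1}}$ for $\beta \in \Br_n$ with $S_\beta$ after the rank jump $n \rightsquigarrow n+1$, and this demands an explicit computation of $\pi_*$ applied to the elementary generator tensored with the stabilization embedding. I expect the cleanest route is to reduce to a two-strand local model, compute both sides explicitly as Koszul complexes on $\Hilb_{n+1}$ versus $\Hilb_n$, and match them via a Lefschetz-type localization argument using the $T_{sc}$-fixed points. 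Once stabilization is established, isotopy invariance of $\HHH(\beta)$ follows from Markov's theorem, and parts (1) and (3) fit in as direct corollaries of the explicit construction.
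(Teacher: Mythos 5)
Your outline correctly identifies the high-level architecture (equivariant matrix factorizations on an affine model, a braid group homomorphism built from Koszul-type generators, a bridge to $\Hilb_n(\CC^2)$, and Markov II as the hard step), but it departs from the paper's actual argument at two places where the proposed shortcut would not go through.

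First, the passage from matrix factorizations to $D^{per}_{T_{sc}}(\Hilb_n(\CC^2))$ is not a push-forward $\pi_*(\calC_\beta\otimes K)$ along a Hilbert--Chow-type map. A matrix factorization lives over an affine variety with a nonzero potential, and there is no map from that variety to $\Hilb_n$ whose push-forward lands in periodic coherent complexes on the Hilbert scheme. In the paper the bridge is the Chern functor $\CH^{st}_{loc}$, which is a composite of three genuinely distinct steps: a Fourier--Mukai-type kernel transform $\CH$ into a Drinfeld-style category $\MF_{\Dr}$ on $\scC=\frg\times G\times\frg$ with potential $W_{\Dr}=\Tr(X(Z-\Ad_gZ))$; a linearization/localization step replacing $G$ by its closure in $\frg$ (Proposition~\ref{prop:loc-iso}, which is where the stability condition $\bullet = st$ is essential); and a linear Koszul duality $\mathrm{KSZ}^*_\frg$ that turns a category of matrix factorizations with potential linear in a factor into $D^{per}$ of a derived critical locus, which after stability is exactly $\Hilb_n(\CC^2)$. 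Without the Koszul-duality step there is no functor at all, so the plan as stated has a hole at its center. Relatedly, the ``subtleties'' about equivariant push-forward referred to in the abstract concern the weak versus strong equivariance dichotomy for the non-reductive group $U^2\subset B^2$: the iterative construction of $j_*$ (Lemma~\ref{lem:ext} and its relatives) requires homotopies that need not be chosen equivariantly for $U$, which is why the paper works with the Chevalley--Eilenberg-weakened notion of equivariance. You do not flag or address this; a naive strongly equivariant push-forward would not be well-defined.

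Second, your proposed Markov II argument by $T_{sc}$-fixed-point localization would not work as described. The differentials in $\mathbb{S}_\beta$ have nonzero $\mathbf{t}$-weight and are only invariant under the anti-diagonal torus $T_a$, so $T_{sc}$-localization is not available at the level of the periodic complex; this is exactly why the paper only applies localization to the decategorified $K$-theory picture (Theorem~\ref{thm:HOMFLY}) and to sufficiently positive JM elements where a parity conjecture collapses the spectral sequence. The actual Markov II argument in \cite{OblomkovRozansky16} instead exploits the nested structure of the free flag Hilbert scheme: the forgetful map $\pi:\FHilb^{free}_n\to\cc\times\FHilb^{free}_{n-1}$ has $\mathbb{P}^{n-1}$ fibers, the matrix factorization $\bar\calC_{\beta\cdot\sigma^{\pm 1}}$ has a specific ladder-shaped deformed Koszul structure (the diagram~\eqref{dia:big}), and the vanishing $H^*(\mathbb{P}^{n-1},\calO(-l))=0$ for $1\le l\le n-1$ kills all but one rung after $\pi_*$, giving the stabilization identity directly. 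Finally, for part (1) the paper does not argue by Chern character inspection; it uses the structural result Theorem~\ref{thm:kill-JM} that $\Phi^{aff}(\Delta_i)=\Phi^{aff}(1)\langle\chi_i,0\rangle$, so the full twist $\FT=\prod\delta_i$ becomes a line-bundle twist, which $\CH^{st}_{loc}$ then sends to tensoring by $\det(\calB)$ (Corollary~\ref{cor:pos}).
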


The  constructive procedure in the statement of theorem relies on the theory of matrix factorizations and in this note
we try to present a gentle introduction into the aspects of the theory of matrix factorizations that are necessary
for our theory. The  author of the notes learned theory of matrix factorizations from discussions with Lev Rozansky,
as result the exposition here is quite biased.

The first
construction of the triply-graded categorification of the HOMFLYPT invariant appeared in the seminal work of
Mikhail Khovanov and Lev Rozansky \cite{KhovanovRozansky08b}. It is natural to conjecture that the homology
discussed in these notes coincide with the Khovanov-Rozansky homology.

\subsection{Outline}
\label{sec:outline}

After defining and motivating the category of matrix factorizations in section \ref{exm:xy} we spend some time discussing
the most common type of matrix factorizations, Koszul matrix factorizations in section \ref{sec:kosz-matr-fact}.
The Koszul matrix factorizations are in many regards are analogous to the complete intersection rings and in this section 
we make this analogy more precise by providing a method for constructing a matrix factorization from a complete
intersection (see lemma \ref{lem:ext}).

Next we discuss Knorrer periodicity in section~\ref{sec:knorrer-periodicity} which is the most basic equivalence relation
between the categories of matrix factorizations. After that we explain how one would perform
push-forward and pull-back between the categories of matrix factorizations, see section \ref{sec:functoriality}. Finally,
in the section \ref{sec:equiv-matr-fact} we introduce the equivariant matrix factorizations, in particular we
explain the difference between the strongly and weakly equivariant matrix factorizations, later we only
work with the weakly equivariant matrix factorizations since the weak equivariance allows us to
define the equivariant push-forward. 

In the section \ref{sec:braid-groups-matrix} we explain the key point of our construction, the homomorphism from the braid group
\(\Br_n\) to the category of matrix factorizations. First in the section~\ref{sec:convolution-product} we introduce our main space \(\cx\)
with a potential \(W\) and define a convolution algebra structure
\(\star\) on the category
\(\MF_{\GL_n\ti B^2}(\cx,W)\), here \(B\subset \GL_n\) is a subgroup of upper-triangular matrices.
There is a slightly smaller space \(\bar{\cx}\) with  potential \(\Wr\) such that  Knorrer
periodicity identifies \(\MF_{\GL_n\ti B^2}(\cx,W)\) with \(\MF_{B^2}(\bar{\cx},\Wr)\) and it intertwines the convolution product
\(\star\) with the convolution product \(\bar{\star}\), we provide details in the section~\ref{sec:knorrer-reduction}.
After setting notations for the ordinary and affine  braid groups in section \ref{sec:braid-groups-1}  we state main properties of
the homomorphisms:
\[\Phi: \Br_n\to \MF_{ B^2}(\bar{\cx}^{st},W),\quad \Phi^{aff}:\Br_n^{aff}\to\MF_{B^2}(\bar{\cx},W),\]
the pull-back along \(j_{st}:\cx^{st}\to\cx\) intertwines these homomorphisms. We postpone the details of the construction of
homomorphisms \(\Phi,\Phi^{aff}\) till section~\ref{sec:geom-real-affine}.

In section~\ref{sec:knot-invariants} we explain how one can use the homomorphism \(\Phi\) to construct the triply-graded homology.
The free Hilbert scheme \(\FHilb_n^{free}\) consists of the \(B\)-conjugacy classes \(\FHilb_n^{free}=\wt{\FHilb}_n^{free}/B\)
pairs of matrices with cyclic vector such
that the monomials of the matrices applied to the vector span \(\CC^n\).
There is an embedding of  the \(B\)-cover \(\wt{\FHilb}_n^{free}\) of the free Hilbert scheme into the stable version of our space
\(\wt{\FHilb}_n^{free}\to \bar{\cx}^{st}\) and we define the  homology group:
\[\mathbb{H}^i(\beta):=\mathbb{H}^*((\Phi(\beta)\ot \Lambda^i\calB)^B),\]
where \(\calB\) is the tautological vector bundle over the free Hilbert scheme. It is shown in \cite{OblomkovRozansky16} that the
graded dimension
total sum
\[\HHH(\beta)=\oplus_i H^i(\beta),\]
is a triply graded knot invariant of the closure \(L(\beta)\). We explain in the section~\ref{sec:oj-trace-markov} why this
invariant specializes to the HOMFLYPT invariant after we forget about one of the gradings.
Here \(H^i(\beta)\) is \(\mathbb{H}^{b+i}(\beta)\) with \(b=b(\beta)\) being some specific function of \(\beta\).

The free Hilbert scheme \(\FHilb_n^{free}:=\wt{\FHilb}_n^{free}/B\) is smooth and it contains the usual flag Hilbert
scheme \(\FHilb_n\subset \FHilb_n^{free}\) which is very singular and not even a local complete intersection.
The  relation of our homology with the honest flag Hilbert scheme is the following:
\[\mathbb{S}_\beta=j_e(\Phi(\beta))^B\in D^{per}_{T_{sc}}(\FHilb^{free}_n), \quad \mathrm{supp}\left(\mathcal{H}(\mathbb{S}_\beta)\right)\subset \FHilb_n,\]
where \(\mathcal{H}(\mathbb{S}_\beta)\) is the sheaf of \(\FHilb_n^{free}\) which is the homology of the
two-periodic complex \(\mathbb{S}_\beta\).

The  most non-trivial part of the statement from \cite{OblomkovRozansky16} is the fact that the homology \(\HHH(\beta)\)
do not change under the Markov move that decreases the number of strands in the braid. 
In the section~\ref{sec:markov-relations} we give a sketch of a proof the Markov move invariance, we rely in this section
on the material of section~\ref{sec:geom-real-affine} where the details of the construction of the braid group action are given.

In the section~\ref{sec:sample-computation} we do a simplest computation in the convolution algebra of the category of matrix
factorizations in the case \(n=2\). We show that in \(\MF_{ B^2}(\bar{\cx}^{st},\Wr)\) we have an isomorphism
\begin{equation}\label{eq:blb-sqr}
\calC_\bl\star\calC_\bl\simeq \mathbf{q}^4\calC_\bl\oplus\mathbf{q}^{2}\calC_\bl,\end{equation}
which is the geometric counter-part of the fact that the square of the
non-trivial Soergel bimodule for \(n=2\) is equal to a double of itself \cite{Soergel00}.

Finally, in the section~\ref{sec:chern-funct-local} we define the categorical Chern functor:
\[\CH_{loc}^{st}:\MF_{\GL_n\ti B^2}(\cx^{st},W)\to D^{per}_{T_{sc}}(\Hilb_n(\CC^2)).
\]
We also discuss the properties of the conjugate functor \(\HC_{loc}^{st}\) (see \cite{OblomkovRozansky18c} for
the original construction )which is monoidal.
The  sheaf \(S_\beta\) in the theorem~\ref{thm:HilbC2} is given by:
\[S_\beta=\CH_{loc}^{st}(\Phi(\beta)).\]

The  advantage of the sheaf \(S_\beta\) over \(\mathbb{S}_\beta\) is that it is a \(T_{sc}\)-equivariant periodic complex
of sheaves on the smooth manifold \(\Hilb_n(\CC^2)\) thus we can hope to use \(T_{sc}\)-localization technique for
computation of the knot homology. There are some technical issues with using the localization method directly as we discuss
in \ref{sec:local-form}. We also explain how these technical issue could be circumvented and in particular how one can apply
this technique to compute the homology of the sufficiently positive elements of Jucy-Murphy algebra. This formula
was conjectured in \cite{GorskyNegutRasmussen16}.

\subsection{Other results}
\label{sec:other-results}
We also would like to mention that many relevant aspects of matrix factorizations are not covered in these notes. The  reader
could consult papers the original papers of Orlov for the connections with mirror symmetry \cite{Orlov04} and paper
\cite{Dyckerhoff11} for some further discussion of the foundations of the theory of matrix factorizations and of course
the seminal paper of Khovanov and Rozansky \cite{KhovanovRozansky08b} where the first construction of a triply graded homology of the links
was proposed.
The  constructions in these notes are motivated by the physical theory of Kapustin, Saulina, Rozansky \cite{KapustinSaulinaRozansky09}, the reader is encouraged to read wonderful, basically purely mathematical paper
\cite{KapustinRozansky10} where the role of matrix factorizations in the theory is explained.

Let us also mention that  there is a slightly different perspective on the geometric interpretation
of the knot homology due to Gorsky, Negu\c{t}, Hogencamp and Rasmussen \cite{GorskyNegutRasmussen16}, \cite{GorskyHogancamp17}. Their approach
takes the theory of Soergel bimodules and the corresponding link homology construction \cite{KhovanovRozansky08b}
as a starting point of theory, rather than the categories of matrix factorizations discussed in  these notes.
Finally, let us mention the recent work of Hogencamp and Elias on categorical diagonalization \cite{EliasHogancamp16},\cite{EliasHogancamp17},\cite{EliasHogancamp18} which provides a categorical setting for the
localozation in the category of coherent sheaves.

These notes by no means were intended as a comprehensive survey  of the theory of matrix factorization or
of the theory of knot homology. It is a merely is a slightly extended version of the  three lectures that
the author delivered at 2018 CIME. Thus the author asks for an apology from the colleagues whose
contributions to the fields are not covered in the notes.

{\bf Acknowledgments:} First of all I would like to thank my coathor and friend Lev Rozansky for teaching
everything that is in these notes. All results in these notes are contained in our joint papers.
I also would like to thank Andrei Negu\c{t} and Tina Kanstrup for discussion related to the content
of the notes. I am very grateful to an anonymous referee for many great suggestion on improving the
first draft of the notes.
I am very greatful to CIME foundation for opportunity to teach the course at
the Summer school. The participants of the course provided valuable feed-back on the material.
I was also partially supported by NSF and Simons foundation.

\section{Matrix factorizations}
\label{sec:matr-fact}

In this section we remind some basic facts about matrix factorizations. There are many excellent exposition on matrix factorizations \cite{Eisenbud80},\cite{Orlov04},\cite{Dyckerhoff11} and  we choose not to concentrate on usual matrix factorizations, instead we aim  to define  equivariant matrix
factorizations and subtleties that arise in attempt to define such. We also discuss Koszul matrix factorizations and the (equivariant)
push-forward functor from \cite{OblomkovRozansky16}.

\subsection{Motivation and examples}


Given an affine variety $\mathcal{Z}$ and a function $F$ on it we define \cite{Eisenbud80} the homotopy category $\MF(\mathcal{Z},F)$ of matrix factorizations whose objects are complexes of projective $R=\CC[\calZ]$-modules \(M^0,M^1\)
$M=M^0\oplus M^1$ equipped with the differential \[D=(D^0,D^1)\in \Hom_R(M^0,M^1)\oplus  \Hom_R(M^1,M^0)\] such that $D^2=F$.
Thus $\MF(\mathcal{Z},F)$ is a triangulated category as explained in subsection 3.1 of \cite{Orlov04}. We first discuss
the objects of this category, then discuss various properties of the morphism spaces.

It is convenient to think about a matrix factorization \((M^0\oplus M^1,D)\) as two-periodic curved complex:
\[\dots\xrightarrow{D^1} M^0\xrightarrow{D^0} M^1\xrightarrow{D^1} M^0\xrightarrow{D^0}M^1\xrightarrow{D^1}\dots,\quad D^2=F.\]
Let us look at several
basic examples of matrix factorizations and discuss briefly a motivation for the definition of the matrix factorizations by
Eisenbud \cite{Eisenbud80}.

\begin{exam}\label{exm:x5} \(\calZ=\cc\), \(R=\cc[x]\) and \(F=x^5\). The two-periodic complex
\[\dots\xrightarrow{x^2} \underline{R}\xrightarrow{x^3} R\xrightarrow{x^2} R\xrightarrow{x^3}R\xrightarrow{x^2}\dots\]
is an example of an object in \(\MF(\cc,x^5).\) Here and everywhere below we underline to indicate zeroth homological
degree.
\end{exam}

\begin{exam}\label{exm:xy} \(\calZ=\cc^2\), \(R=\cc[x,y]\), \(F=xy\). The two-periodic complex
  \[\dots\xrightarrow{x} \underline{R}\xrightarrow{y} R\xrightarrow{x} R\xrightarrow{y}R\xrightarrow{x}\dots\]
  is an example of an object in \(\MF(\cc^2,xy)\).
\end{exam}

The  last example has the following geometric interpretation. A module over a quotient ring \(Q=\cc[x,y]/(xy)\), in general,
does not have  a finite free resolution. In particular, \(M=\cc[x]=Q/(y)\) is a module over \(Q\) with an infinite
free resolution:
\[0\leftarrow M\xleftarrow{y}Q\xleftarrow{x}Q\xleftarrow{y}Q\xleftarrow{x}\dots.\]
This resolution has a two-periodic (half-infinite) tail which is a reduction of the matrix factorization from the example~\ref{exm:xy}.
As explained in \cite{Eisenbud80} this phenomenon is more general.



We felt obliged to mention these results on matrix factorizations to honor the origins of the subjects. For further development of Eisenbud
theory the reader is encouraged to look at \cite{Eisenbud80} as well as \cite{Orlov04},\cite{Orlov09},\cite{Orlov12} where the connection
with B-model theory is developed. However, the hypersurfaces defined by the potentials from \cite{OblomkovRozansky16} do not
have a clear geometric interpretation and it is unclear to us how to make use of Eisenbud's theory in our case. Instead, more elementary
homological aspect of the matrix factorizations is important to us. Roughly stated, the very important observation is
that all important homological information about the category of matrix factorizations is contained in a neighborhood of the critical
locus. We explain more rigorous statement below.

It is a good place to define morphisms in the category of matrix factorizations. Suppose we have two objects
\(\calf_1=(M_1,D_1),\calf_2=(M_2,D_2)\in \MF(\calZ,F)\) then we define:
\[\und{\Hom}(\calf_1,\calf_2):=\{\Psi\in \Hom_R(M_1,M_2)|\Psi\circ D_1=D_2\circ \Psi\}.\]

Since the modules \(M_i\) are \(\zz_2\)-graded we have a decomposition \[\und{\Hom}(M_1,M_2)=\oplus_{i\in\zz_2}\und{\Hom}^i(M_1,M_2)\]
where \(\und{\Hom}^i(M_1,M_2)\subset \Hom_R^i(M_1,M_2):=\Hom_R(M_1^0,M_2^i)\oplus \Hom_R(M_1^1,M_2^{i+1}).\)

We say that an element \(\Psi\in \und{\Hom}^0(\calF_1,\calF_2)\) is homotopic to zero:
\(\Psi\sim 0\) if there is \(h\in \Hom^1(M_1,M_2)\) such that \(\Psi=h\circ D_1+ D_2\circ h\). Finally, we define the space of morphisms
as a set of equvalence classes with respect to the homotopy equivalence:
\[\Hom(\calf_1,\calf_2):=\und{\Hom}^0(\calf_1,\calf_2)/\sim\]

Now that we defined the objects and morphisms between the objects we can state Orlov's theorem

\begin{theorem}\cite{Orlov04} \(\MF(\calz,F)\) has a structure of the triangulated category.
\end{theorem}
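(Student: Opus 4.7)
The plan is to equip $\MF(\calZ,F)$ with a shift functor, a cone construction, and a class of distinguished triangles, then verify Verdier's axioms (TR1)--(TR4). Throughout I will exploit the fact that the objects are $\ZZ/2$-graded curved complexes, so everything mirrors the familiar construction for the homotopy category of ordinary chain complexes, with the caveat that the ``differential'' squares to $F\cdot\id$ rather than $0$.

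First I would define the shift $[1]$ on an object $\calF=(M^0\oplus M^1,D)$ by swapping the parity components and negating the differential: $\calF[1]=(M^1\oplus M^0,-D)$, where on the level of underlying modules $M[1]^i:=M^{i+1}$. One checks $(-D)^2=D^2=F$, so $\calF[1]$ is again an object of $\MF(\calZ,F)$; and $[1]\circ[1]=\id$ up to canonical isomorphism, reflecting the two-periodicity. Next, to a closed morphism $\Psi:\calF_1\to\calF_2$ I would associate the \emph{mapping cone}
\[
\mathrm{Cone}(\Psi)\;=\;\bigl(M_1[1]\oplus M_2,\;D_\Psi\bigr),
\qquad
D_\Psi=\begin{pmatrix}-D_1 & 0\\ \Psi & D_2\end{pmatrix}.
\]
A direct matrix calculation using $D_i^2=F$ and $\Psi D_1=D_2\Psi$ yields $D_\Psi^2=F\cdot\id$, so the cone lies in $\MF(\calZ,F)$. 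The class of distinguished triangles is then defined to consist of all triangles isomorphic (in the homotopy category) to the standard one
\[
\calF_1\xrightarrow{\Psi}\calF_2\xrightarrow{\iota}\mathrm{Cone}(\Psi)\xrightarrow{\pi}\calF_1[1],
\]
with $\iota,\pi$ the obvious inclusion and projection.

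The remaining work is to verify (TR1)--(TR4). For (TR1), the identity triangle $\calF\xrightarrow{\id}\calF\to 0\to \calF[1]$ is distinguished because $\mathrm{Cone}(\id)$ is null-homotopic via the explicit contracting homotopy $h=\bigl(\begin{smallmatrix}0&\id\\0&0\end{smallmatrix}\bigr)$, which satisfies $hD_{\id}+D_{\id}h=\id$; and every morphism embeds into a distinguished triangle by construction. For (TR2), rotation, one produces an explicit homotopy equivalence $\mathrm{Cone}(\iota)\simeq \calF_1[1]$; the homotopies are the standard ones, whose verifications only use $D_i^2=F$ (the $F$-contributions cancel, so the classical formulas still work). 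Axiom (TR3) follows from the usual diagram chase combined with the fact that chain maps lifted through cones are well-defined up to homotopy.

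The main technical point, and the step I would expect to require the most care, is the octahedral axiom (TR4). Here one must produce, for a composable pair $\Psi:\calF_1\to\calF_2$, $\Phi:\calF_2\to\calF_3$, a distinguished triangle
\[
\mathrm{Cone}(\Psi)\longrightarrow \mathrm{Cone}(\Phi\Psi)\longrightarrow \mathrm{Cone}(\Phi)\longrightarrow \mathrm{Cone}(\Psi)[1]
\]
together with explicit homotopies making the octahedron commute. The ordinary-complex proof (see, e.g., Weibel) writes down a specific block morphism and a specific null-homotopy; one simply copies it and checks that the curvature terms $F\cdot\id$ cancel in the same way that $0$-terms cancel in the classical setting. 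Because the algebra is $\ZZ/2$-graded and the differentials are only required to square to a central scalar, every sign and every composition that appears in the classical argument still makes sense, so the verification reduces to routine (if tedious) bookkeeping. Once (TR1)--(TR4) are checked, $\MF(\calZ,F)$ is a triangulated category as claimed.
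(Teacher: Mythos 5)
The paper does not give its own proof of this statement; it simply defers to subsection 3.1 of Orlov's paper. Your sketch is the standard argument used there — shift by parity swap with sign flip, mapping cones with the block differential, and verification of (TR1)--(TR4) exactly as for the homotopy category of ordinary complexes, the point being that the curvature contributions $F\cdot\id$ cancel on both sides of every identity one needs — so your approach is correct and agrees in substance with the cited source.
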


  To complete our discussion of the homological properties of category of matrix factorizations with respect to their critical
  locus let us observe that an element \(f\in R\) naturally gives an element of \(\Hom(\calF,\calF)\). For simplicity let us
  also assume that \(\calZ\subset \cc^m\). Then we have a well-defined ideal \(I_{crit}\subset R\) generated by \(\frac{\partial F}{\partial x_i}\)
  \(i=1,\dots,m\) and \(x_i\) are coordinates on \(\cc^m\).

 \begin{prop}\label{prop:crit}
    For any \(\calf\in\MF(\calz,F)\) and \(f\in I_{crit}\) we have:
    \[\und{\Hom}^0(\calF,\calf)\ni f\sim 0.\]
  \end{prop}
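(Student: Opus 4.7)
Fix a matrix factorization $\calF=(M,D)$ and write $R=\CC[\calz]$ as a quotient of $\tilde R = \CC[x_1,\dots,x_m]$. The strategy is to construct an explicit null-homotopy for each generator $\partial_i F$ of $I_{crit}$ and then extend by $R$-linearity to an arbitrary $f \in I_{crit}$.

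The heart of the argument is the Leibniz rule applied to the defining relation $D^2 = F\cdot \mathrm{id}_M$. In a free $\tilde R$-presentation, $D$ is literally a matrix of polynomials and its entry-wise derivative $\partial_i D$ is a well-defined odd $R$-linear endomorphism of $M$. Differentiating $D^2=F\cdot\mathrm{id}_M$ with respect to $x_i$ gives
\[
(\partial_i D)\circ D \;+\; D\circ (\partial_i D) \;=\; (\partial_i F)\cdot \mathrm{id}_M ,
\]
which is exactly the null-homotopy relation $\Psi = h\circ D + D\circ h$ recalled just before the proposition, with $\Psi = (\partial_i F)\cdot\mathrm{id}_M$ and $h = h_i := \partial_i D \in \und{\Hom}^1(\calF,\calF)$. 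Hence each generator $\partial_i F$ acts by a null-homotopic endomorphism of $\calF$. To extend: any $f \in I_{crit}$ can be written $f = \sum_i g_i\,\partial_i F$ with $g_i \in R$, and since multiplication by $g_i$ is even and commutes with $D$, the combination $h := \sum_i g_i\, h_i$ satisfies
\[
D\circ h + h\circ D \;=\; \sum_i g_i\bigl(D\circ h_i + h_i\circ D\bigr) \;=\; \sum_i g_i(\partial_i F)\cdot\mathrm{id}_M \;=\; f\cdot\mathrm{id}_M ,
\]
so $f\cdot\mathrm{id}_M \sim 0$, as required.

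The only real obstacle is making the symbol $\partial_i D$ intrinsic when $M$ is merely projective, rather than free, since entry-wise differentiation a priori depends on a choice of basis. This is resolved in either of two equivalent ways: (i) pass to a free $\tilde R$-lift $\tilde M$ of $M$ with a lifted differential $\tilde D$ satisfying $\tilde D^2 \equiv \tilde F\cdot\mathrm{id}\pmod{I(\calz)}$, differentiate there, and reduce mod $I(\calz)$; or (ii) fix any $R$-linear connection $\nabla_i$ on $M$ covering $\partial_i$ and set $h_i := [\nabla_i, D]$, which is automatically $R$-linear and odd and satisfies the same Leibniz identity. Either device makes the computation above rigorous, and once it is in place the proposition follows immediately from the two displayed lines.
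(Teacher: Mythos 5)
Your proof is correct and uses the same key idea as the paper: differentiate $D^2 = F\cdot\mathrm{id}_M$ to obtain $(\partial_i D)D + D(\partial_i D) = (\partial_i F)\cdot\mathrm{id}_M$, so $h_i = \partial_i D$ is the null-homotopy. You add two careful details the paper's terse proof leaves implicit — extending from the generators $\partial_i F$ to arbitrary $f\in I_{\mathrm{crit}}$ by $R$-linearity, and making $\partial_i D$ intrinsic for a merely projective $M$ — but the argument is essentially identical.
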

\begin{proof}
    If it is enough to show the statement for \(f=\frac{\partial F}{\partial x_i}\). Thus the statement follows since:
    \[\frac{\partial F}{\partial x_i}=\frac{\partial D}{\partial x_i}D+D\frac{\partial D}{\partial x_i},\]
    and \(\frac{\partial D}{\partial x_i}\) provides the needed homotopy.
  \end{proof}

  The  last proposition  implies that category of matrix factorizations is model for the coherent sheaves on possibly singular
  critical locus of the potential \(F\). When the potential is linear in by some set of variables than there is an equivalence
  between with the DG category of the critical locus (see section \ref{sec:line-kosz-dual} for more discussion). Another manifestation of this
  principle is the shrinking lemma, see lemma~\ref{lem:shrink}   below.
  
\subsection{Koszul matrix factorizations}
\label{sec:kosz-matr-fact}

The  matrix factorizations from examples \ref{exm:xy} and \ref{exm:x5} are examples of so called Koszul matrix factorizations which we
discuss in this subsection. Suppose we have a presentation of the potential as sum \(F=\sum_{i=1}^n a_ib_i\). Then we define Koszul
matrix factorization \(\rmk[\vec{a},\vec{b}]\in \MF(\calz,F)\) as
\[\rmk[\vec{a},\vec{b}]:=(\Lambda^\bullet V,D),\quad D=\sum_i a_i\theta_i+b_i\frac{\partial}{\partial \theta_i},\]
where \(V=\langle \theta_1,\dots,\theta_n\rangle\). The  examples \ref{exm:x5}, \ref{exm:xy} are \(\rmk[x^2,x^3]\) and \(\rmk[x,y]\), respectively. 

The Koszul matrix factorizations are tensor products of the simplest Koszul matrix factorizations. Indeed, given two
matrix factorizations \(\calf_1\in \MF(\calz,F_1)\),  \(\calf_2\in \MF(\calz,F_2)\) the tensor product
\(\calf_1\otimes\calf_2\in \MF(\calz,F_1+F_2)\) is the matrix factorization \((M_1\otimes M_2,D_1\otimes 1+1\otimes D_2)\).
Thus we have \(\rmk[\vec{a},\vec{b}]=\otimes_{i=1}^n\rmk[a_i,b_i]\).

An object of the category of matrix  factorizations with the zero potential is a two-periodic complex of coherent sheaves.
We denote by \(D^{per}(\calz)\) the derived category of the two-periodic complexes of coherent sheaves. Given
two matrix factorizations \(\calf_1\in \MF(\calz,F)\),  \(\calf_2\in \MF(\calz,-F)\) their tensor product is
an element of \(D^{per}(\calz)\) and proposition~\ref{prop:crit} implies:

\begin{cor} For  \(\calf_1\in \MF(\calz,F)\),  \(\calf_2\in \MF(\calz,-F)\) homology of
  the two-periodic complex \(\calf_1\otimes \calf_2 \) are supported on the zero locus of
  \(I_{crit}\).
  \end{cor}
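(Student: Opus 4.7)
The plan is to deduce this directly from Proposition~\ref{prop:crit}. First I would observe that the potential of the tensor product is $F+(-F)=0$, so $\calf_1\otimes\calf_2$ genuinely is a two-periodic complex of coherent sheaves and it makes sense to speak of its cohomology sheaves. Also, since $\partial(-F)/\partial x_i = -\partial F/\partial x_i$, the critical ideals for $F$ and for $-F$ coincide, so the statement $f\in I_{crit}$ is unambiguous.

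The key step is to show that every $f\in I_{crit}$ acts by the zero morphism on the cohomology of $\calf_1\otimes\calf_2$. For this, apply Proposition~\ref{prop:crit} to $\calf_1\in\MF(\calz,F)$: there exists $h\in\und{\Hom}^1(\calf_1,\calf_1)$ such that $f\cdot\mathrm{id}_{\calf_1} = h\circ D_1 + D_1\circ h$ as endomorphisms of the underlying module $M_1$. Tensoring with $\mathrm{id}_{\calf_2}$ and denoting the tensor product differential by $D=D_1\otimes 1 + 1\otimes D_2$, a direct Koszul-sign computation shows that $h\otimes\mathrm{id}_{\calf_2}$ is a homotopy realising multiplication by $f$ on $\calf_1\otimes\calf_2$ as a null-homotopic endomorphism of two-periodic complexes. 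Consequently $f$ acts as zero on the cohomology sheaves.

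Since this holds for every generator $f\in I_{crit}$, the cohomology sheaves of $\calf_1\otimes\calf_2$ are annihilated by $I_{crit}$, hence set-theoretically supported on the zero locus $V(I_{crit})$, which is the claim.

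The only point requiring a little care is the sign bookkeeping in the tensor-product differential: one has to verify that $h\otimes\mathrm{id}$ (rather than some signed variant) is the correct homotopy, but since $\mathrm{id}_{\calf_2}$ commutes with everything up to the standard Koszul sign and $h$ is of odd parity, the verification is routine. No subtler obstacle arises.
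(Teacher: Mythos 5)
Your proposal is correct and takes the natural route: the paper gives no explicit argument (it states the corollary as an immediate consequence of Proposition~\ref{prop:crit}), and applying the proposition to one tensor factor and lifting the homotopy via $h\otimes\mathrm{id}$ is precisely the intended deduction. The Koszul-sign check you flag does go through — in $D\circ(h\otimes 1)+(h\otimes 1)\circ D$ the cross terms $1\otimes D_2$ and $h\otimes 1$ cancel in pairs because $h$ is odd, leaving $(D_1h+hD_1)\otimes 1=f\cdot\mathrm{id}$ — so nothing is missing.
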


  Now let us discuss a method for constructing interesting Koszul matrix factorizations. Let us first recall some basic
  properties of the usual Koszul complexes. The sequence \(f_1,\dots,f_m\in R\) is called {\it regular} if
  \(f_i\) is not a zero-divisor in the quotient \(R/(f_1,\dots,f_{i-1} )\) for \(i=1,\dots,n\). It is known that the regularity
  does not depend on the order of the elements. There is an equivalent way to define regularity with the help of
  Koszul complexes. The  Koszul complex of \(\vec{f}\) is:
  \[\rmk[\vec{f}]=(\Lambda^\bullet V,D),\quad D=\sum_if_i\frac{\partial}{\partial \theta_i}.\]
  \begin{prop} The sequence \((f_1,\dots,f_m)\) is regular if and only if:
    \[H^i(\rmk[\vec{f}])=0, \quad i>0,\quad,\quad H^0(\rmk(\vec{f}))=R/(f_1,\dots,f_m).\]
  \end{prop}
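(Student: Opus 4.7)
The plan is to induct on $m$, exploiting the multiplicative decomposition $\rmk[\vec{f}] = \rmk[f_1,\dots,f_{m-1}] \otimes_R \rmk[f_m]$ of Koszul complexes (a consequence of the tensor-product formula for Koszul matrix factorizations, now read with zero potential as an honest complex). The base case $m=1$ is the two-term complex $R \xrightarrow{f_1} R$: its higher cohomology is the annihilator of $f_1$ in $R$ and its degree-zero cohomology is $R/(f_1)$, so the claimed description is equivalent to $f_1$ being a non-zero-divisor, i.e.\ to regularity of $(f_1)$.

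For the inductive step I would note that tensoring with $\rmk[f_m]$ realises the mapping cone of multiplication by $f_m$. This yields the long exact sequence
\[
\cdots \to H^{i-1}(\rmk[\vec{f}]) \to H^{i}(\rmk[f_1,\dots,f_{m-1}]) \xrightarrow{\,f_m\,} H^{i}(\rmk[f_1,\dots,f_{m-1}]) \to H^{i}(\rmk[\vec{f}]) \to \cdots,
\]
which is the engine that moves regularity from level $m-1$ to level $m$ in both directions.

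Forward direction: assume $(f_1,\dots,f_m)$ is regular. By induction $H^i(\rmk[f_1,\dots,f_{m-1}])=0$ for $i>0$ and $H^0=R/(f_1,\dots,f_{m-1})$. Feeding this into the long exact sequence kills $H^i(\rmk[\vec{f}])$ for $i\ge 2$, and at the bottom leaves
\[
0 \to H^{1}(\rmk[\vec{f}]) \to R/(f_1,\dots,f_{m-1}) \xrightarrow{\,f_m\,} R/(f_1,\dots,f_{m-1}) \to H^{0}(\rmk[\vec{f}]) \to 0.
\]
Regularity says $f_m$ acts injectively on the quotient, so $H^{1}$ vanishes and $H^{0}$ is $R/(f_1,\dots,f_m)$, as desired.

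Converse direction: given the cohomological vanishing and the identification of $H^0$, I would read the long exact sequence backwards. It shows that multiplication by $f_m$ is surjective on each $H^i(\rmk[f_1,\dots,f_{m-1}])$ for $i>0$; in the local/Noetherian setting with the $f_i$ inside the maximal ideal, which is the regime relevant to the complete-intersection picture motivating this proposition and Lemma~\ref{lem:ext}, Nakayama's lemma forces these modules to vanish, after which the inductive hypothesis yields regularity of $(f_1,\dots,f_{m-1})$ and the bottom of the sequence delivers injectivity of $f_m$ on $R/(f_1,\dots,f_{m-1})$. The main obstacle is precisely this direction: Koszul vanishing characterises regularity cleanly only under such a local hypothesis, so the honest plan is to make (or tacitly adopt) that hypothesis, which is harmless in the geometric applications here.
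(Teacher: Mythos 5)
The paper does not actually prove this proposition; it is invoked as a known fact from commutative algebra, so there is no in-text argument to compare yours against. Your proof is the standard one via the long exact sequence of the mapping cone of $f_m$ on $\rmk[f_1,\dots,f_{m-1}]$, and it is correct in substance. Two remarks.

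A small bookkeeping slip: with the homological indexing the Koszul complex naturally carries (the paper's differential $\sum f_i\,\partial/\partial\theta_i$ lowers exterior degree), the connecting homomorphism of $\mathrm{Cone}(f_m)$ \emph{lowers} degree, so the displayed segment should read
\[
H^{i+1}(\rmk[\vec f]) \longrightarrow H^{i}(\rmk[f_1,\dots,f_{m-1}]) \xrightarrow{\,f_m\,} H^{i}(\rmk[f_1,\dots,f_{m-1}]) \longrightarrow H^{i}(\rmk[\vec f]),
\]
whereas you wrote $H^{i-1}(\rmk[\vec f])$ in the first slot. The four-term exact sequence you extract at the bottom is nonetheless correct, so nothing downstream is affected.

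More importantly, you are right to single out the converse as the delicate direction, and right that a local or graded hypothesis is genuinely needed: the proposition as literally stated is false in general. For instance, in $R=k[x,y,z]$ the sequence $z(1-x),\ y(1-x),\ x$ has the same Koszul homology as the regular sequence $x,\ y(1-x),\ z(1-x)$ (Koszul homology is permutation-invariant), hence $H^i=0$ for $i>0$ and $H^0=R/(x,y,z)$; but $y(1-x)$ annihilates the nonzero class of $z$ in $R/(z(1-x))$, so the sequence in the given order is not regular. The paper is tacitly in the local/graded regime; indeed the sentence immediately before the proposition also asserts permutation-invariance of regularity, which likewise fails without that assumption, as the same example shows. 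So your move of adopting the local hypothesis and applying Nakayama (to the Koszul homology modules, which are finitely generated and annihilated by $(f_1,\dots,f_{m-1})$, hence finite over the local quotient, with $f_m$ in the maximal ideal) is exactly what is required and is faithful to the setting the paper has in mind.
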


  Given a finite complex of \((C_\bl,d)\)  of free \(R\)-modules we denote by \([C_\bl]_{per}\) the two-periodic folding of the
  complex. It is an element of \(\MF(\calz,0)\). Suppose \(F\in (f_1,\dots,f_m)\) and
  the sequence \(\vec{f}\) is regular. Then the lemma  below shows that there is an essentially unique way to deform
  the complex \([\rmk[\vec{f}]]_{per}\) to an element of \(\MF(\calz,F)\). We outline a proof of the lemma to
  demonstrate the key deformation theory technique that is used in many constructions of \cite{OblomkovRozansky16}.

  \begin{lemma} \label{lem:ext}Suppose \(F\in (f_1,\dots,f_m)\) and
    the sequence \(\vec{f}\) is regular.Then the Koszul complex
    \[C_\bl=\rmk[\vec{f}]=\{C_0\xleftarrow{d_1^+}C_1\xleftarrow{d_2^+}\dots \xleftarrow{d_m^+}C_m\}\]
    could be completed with the opposite differentials \(d_i^-:C_\bl\to C_{\bl+2i-1}\), \(i>0\) such that
    \[(C_\bl,d^++d^-)\in \MF(\calz,F).\]
  \end{lemma}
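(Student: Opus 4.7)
The plan is to solve $(d^+ + d^-)^2 = F \cdot \id$ order by order in $d^- = \sum_{i \geq 1} d_i^-$. Since $\calZ$ is affine and $F \in (f_1,\dots,f_m)$, I would write $F = \sum_i g_i f_i$ for some $g_i \in R$ and set
\[
d_1^- := \sum_i g_i\, \theta_i \wedge (-) : C_\bullet \to C_{\bullet+1}.
\]
The three terms of $(d^+ + d_1^-)^2$ are handled as follows: $(d^+)^2 = 0$ is the Koszul-complex property; the anticommutator $\{d^+, d_1^-\} = \sum_{i,j} f_i g_j \{\partial_{\theta_i}, \theta_j\} = \sum_i f_i g_i = F \cdot \id$ using that $\partial_{\theta_i}$ is an odd derivation with $\{\partial_{\theta_i}, \theta_j\} = \delta_{ij}$; and $(d_1^-)^2 = 0$ by antisymmetry of the wedge product. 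Thus $D := d^+ + d_1^-$ already satisfies $D^2 = F$, and one can take $d_i^- = 0$ for all $i \geq 2$.

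To frame this as the deformation-theory argument that the excerpt advertises as the template for later constructions, I would instead build the corrections inductively. Assume $D_{\leq k} := d^+ + \sum_{i \leq k} d_i^-$ satisfies $D_{\leq k}^2 = F \cdot \id + E_k$ with the error $E_k$ supported in the part of $\End(C_\bullet)$ that raises the exterior degree by at least $2k$. The graded Jacobi identity applied to $D_{\leq k}^3$ forces $[d^+, E_k] = 0$, so $E_k$ is a cocycle for $\mathrm{ad}(d^+)$ on $\End(C_\bullet)$. The regularity hypothesis on $\vec f$ ensures that $\rmk[\vec f]$ is a free resolution of $R/(\vec f)$, and a standard argument then shows that $(\End(C_\bullet), \mathrm{ad}(d^+))$ is acyclic in the positive exterior degrees where $E_k$ lives; choosing $d_{k+1}^-$ to be a primitive of $-E_k$ continues the induction.

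The main obstacle in this general deformation setup is bookkeeping: one must verify that the obstruction $E_k$ lands precisely in the graded component where regularity provides vanishing. In the linear-in-$\theta$ construction above this difficulty evaporates because $(d_1^-)^2 = 0$ identically and the series terminates at first order; but the inductive picture is what will be needed in the matrix-factorization constructions later in the notes, where the correction may have to be extended through several orders before it closes up, and where the role of regularity of the sequence $\vec f$ becomes essential rather than incidental.
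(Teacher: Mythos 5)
Your direct construction is correct, and it is genuinely simpler than the proof in the paper: once you write $F=\sum_i g_i f_i$ and set $d_1^-=\sum_i g_i\theta_i\wedge(-)$, $d_i^-=0$ for $i\geq 2$, the object $(C_\bullet,d^++d_1^-)$ is exactly the Koszul matrix factorization $\rmk[\vec g,\vec f]$ already introduced in the section preceding the lemma, and your verification of $(d^++d_1^-)^2=F$ is the standard one. Notably, this argument does not use the regularity of $\vec f$ at all --- existence of a completion only needs $F\in(f_1,\dots,f_m)$; regularity is what the paper needs for the \emph{uniqueness} statement (the subsequent lemma) and for the more general extension problems where no Koszul-type ansatz is available. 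The paper deliberately bypasses the one-line proof and runs the order-by-order deformation argument precisely because, as it says, it is ``demonstrating the key deformation theory technique'' needed later, where the starting complex is not Koszul and one only knows $\Hom^{<0}_{d^+}(C_\bullet,C_\bullet)\sim 0$; you correctly identified this and your sketch of the inductive step (lowest-degree obstruction is an $\ad(d^+)$-cocycle, killed because $C_\bullet$ is a bounded complex of projectives resolving $R/(\vec f)$, hence $\Hom_{K(R)}(C_\bullet,C_\bullet[r])=0$ for $r>0$) matches the paper's argument in substance, modulo the minor bookkeeping point that what is actually shown at each stage is that the \emph{lowest-degree component} of the error is a cocycle, which suffices. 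So: your shortcut is right and worth knowing, and your reconstruction of the paper's intended argument is also essentially correct.
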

  \begin{proof}
    We will construct the differentials \(d_i\) iteratively. Since  the sequence is regular we have
    a homotopy equivalence:
    \begin{equation}\label{eq:hmtp}(C_\bl,d^+)\sim Q=R/(f_1,\dots,f_m).\end{equation}
    Let us also introduce notation for the graded pieces of the space of homomorphisms:
    \[\Hom^i(C_\bl,C_\bl)=\oplus_j \Hom(C_{j},C_{-i+j}).\]  The element \(F\) is an endomorphism of \((C_\bl,d^+)\) and because of
    \eqref{eq:hmtp} it is homotopic to zero by the lemma assumptions. Thus there is a homotopy \(h^{(-1)}\in \Hom^{-1}(C_\bl,C_\bl)\) such that
    \(F=h^{(1)}\circ d^++d^+\circ h^{(1)}\).  Let us set \(D^{(1)}=d^++h^{(1)}\).

    The  differential \(D^{(1)}\) is the first order approximation for our desired extension. It is not differential of a matrix
    factorization if \(n>1\) since:
    \[(D^{(1)})^2=F+(h^{(1)})^2.\]
    However the correction term \((h^{(1)})^2\) is actually an element of \(\Hom^{-2}_{d^+}(C_\bl,C_\bl)\), that is it commutes with
    the differential \(d^+\):
    \[d^+\circ h^{(1)}\circ h^{(1)}=F h^{(1)}-h^{(1)}\circ d^+\circ h^{(1)}=F h^{(1)}+h^{(1)}\circ h^{(1)}\circ d^+-h^{(1)}F=h^{(1)}\circ h^{(1)}\circ d^+.\]
    Thus again by \eqref{eq:hmtp} there is a homotopy \(h^{(3)}\in \Hom^{-3}(C_\bl,C_\bl)\) such that \(h^{(1)}=d_+\circ h^{(3)}+ h^{(3)}\circ d_+\).
    We define the next approximation to the needed differential \(D^{(3)}=d^++h^{(1)}+h^{(3)}\). Again \(D^{(3)}\) is not a differential
    of a matrix factorization if \(n>3\):
    \begin{multline*}
     (D^{(1)}+h^{(3)})^2=F+(h^{(1)})^2+D^{(1)}\circ h^{(3)}+ h^{(3)}\circ D^{(1)}+(h^{(3)})^2=F+h^{(1)}\circ h^{(3)}+h^{(1)}\circ h^{(3)}+(h^{(3)})^2.
    \end{multline*}
    The correction term belongs to \(\Hom^{<-3}(C_\bl,C_\bl)\) and the degree four piece of this term is
    \( h^{(1)}\circ h^{(3)}+h^{(3)}\circ h^{(1)}\).  Let us check that \( h^{(1)}\circ h^{(3)}+h^{(3)}\circ h^{(1)}\in \Hom^{-4}_{d^+}(C_\bl,C_\bl)\):
    \begin{gather*}
      d^+\circ h^{(1)}\circ h^{(3)}=F h^{(3)}-h^{(1)}\circ d^+\circ h^{(3)}=F h^{(3)} -(h^{(1)})^3-h^{(1)}\circ h^{(3)}\circ d^+,\\
      d^+\circ h^{(3)}\circ h^{(1)}= h^{(1)}\circ h^{(1)}\circ h^{(1)}-h^{(3)}\circ d^+\circ h^{(1)}=(h^{(1)})^3-h^{(3)}F-h^{(3)}\circ h^{(1)}\circ d^+.
    \end{gather*}
    By the same argument as before homomorphism     \( h^{(1)}\circ h^{(3)}+h^{(3)}\circ h^{(1)}\) is homotopic to zero and let denote
    by \(h^{(5)}\in \Hom^{-5}(C_\bl,C_\bl)\). The next approximation for our differential is \(D^{(5)}=d^++h^{(1)}+h^{(3)}+h^{(5)}\) and
    \[(D^{(5)})^2-F\in \Hom^{<-5}(C_\bl,C_\bl).\]
    Similar method could be applied to show that correction term of degree six is homotopic to zero and thus we have the next
    order correction. Clearly, this iterative procedure terminates since our complex is of finite length. More formal proof of
    the lemma is given in lemma 2.1 in \cite{OblomkovRozansky16}.
  \end{proof}

  \begin{remark}
    The only assumption on the complex \((C_\bl,d^+)\) that we used is that
    \begin{equation}\label{eq:hom0}
      \Hom^{<0}_{d^+}(C_\bl,C_\bl)\sim 0.
      \end{equation} Thus we can strengthen our
    lemma a little bit by replacing regularity Koszul complex by the condition \eqref{eq:hom0}
  \end{remark}

  It is natural to ask how canonical is the matrix factorization \((C_\bl,d^++d^-)\) constructed in the previous lemma.
  Clearly, our method relies on a existence of various homotopies which are not unique. However, one can show that
  outcome of the iterative procedure in the proof is unique up to an isomorphism. We invite reader to try to apply the
  iterative method of the previous lemma to show lemma below, a formal proof could be found in lemma 3.7 in \cite{OblomkovRozansky16}.

  \begin{lemma}
    Let \((C_\bl,d^+)\) be a complex of free modules with non-trivial terms in degrees from \(0\) to \(l\ge 0\) such that
    \(\Hom_{d^+}^{<0}(C_\bl,C_\bl)\sim 0\). Suppose we have two matrix factorizations
    \[\calf=(C_\bl,d^++d^-),\tilde{\calf}=(C_\bl,d^++\tilde{d}^-)\in \MF(\calz,F),\]
    where \(d^-=\sum_{i\ge 0} d^-_i\), \(\tilde{d}^-=\sum_{i\ge 0} \tilde{d}^-_i\),
    \(d^-_i,\tilde{d}^-_i\in \Hom^{-2i-1}(C_\bl,C_\bl)\) and \(F\sim 0\) as endomorphism of
    \( (C_\bl,d^+)\). Then there is \(\Psi=1+\sum_{i>0}\Psi_i\), \(\Psi_i\in \Hom^{-i}(C_\bl,C_\bl)\) such that
    \[\Psi\circ (d^++d^-)\circ \Psi^{-1}=d^++\tilde{d}^-.\]
  \end{lemma}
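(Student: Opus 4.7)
The plan is to construct $\Psi = 1 + \sum_{i>0}\Psi_i$ iteratively in ascending order of $i$, using the hypothesis $\Hom^{<0}_{d^+}(C_\bl,C_\bl)\sim 0$ at each stage to kill the leading obstruction to the conjugation identity $\Psi\circ(d^+ + d^-) = (d^+ + \tilde{d}^-)\circ\Psi$. This is the uniqueness counterpart of the existence scheme in Lemma~\ref{lem:ext}, and it runs along the same deformation-theoretic lines.

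Concretely, I set $\Psi^{(0)} = 1$ and assume inductively that $\Psi^{(k)} = 1 + \Psi_1 + \cdots + \Psi_k$ has been constructed so that
$$R^{(k)} := \Psi^{(k)}\circ(d^+ + d^-) - (d^+ + \tilde{d}^-)\circ\Psi^{(k)} \in \Hom^{\leq -(k+2)}(C_\bl,C_\bl).$$
Let $E_k\in\Hom^{-(k+2)}(C_\bl,C_\bl)$ denote its leading component. The crucial step is that $E_k$ is $[d^+,\cdot]$-closed. This I would verify by composing $R^{(k)}$ with $D = d^+ + d^-$ on the right and $\tilde{D} = d^+ + \tilde{d}^-$ on the left, with the graded sign appropriate to the parity; the $D^2$ and $\tilde{D}^2$ terms then combine into $\Psi^{(k)}D^2 - \tilde{D}^2\Psi^{(k)} = \Psi^{(k)}F - F\Psi^{(k)} = 0$, using centrality of $F$. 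Extracting the degree $-(k+1)$ piece, where only the $d^+$-parts of $D$ and $\tilde{D}$ contribute, yields $[d^+,E_k]=0$. The hypothesis then furnishes $\Psi_{k+1}\in\Hom^{-(k+1)}(C_\bl,C_\bl)$ with $[d^+,\Psi_{k+1}] = E_k$, and setting $\Psi^{(k+1)} := \Psi^{(k)} + \Psi_{k+1}$ pushes the error down into $\Hom^{\leq -(k+3)}$.

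The induction terminates after at most $l$ steps, because $(C_\bl,d^+)$ is concentrated in cohomological degrees $[0,l]$ so $\Hom^{-i}(C_\bl,C_\bl)=0$ for $i>l$. The limit $\Psi = 1 + \Psi_1 + \cdots + \Psi_l$ is then a well-defined endomorphism, and it is invertible since $\Psi - 1$ is of strictly negative cohomological degree and hence nilpotent on the finite-length complex $C_\bl$, so $\Psi^{-1} = \sum_{k\geq 0}(1-\Psi)^k$ is a finite sum.

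The main obstacle I expect is the sign and degree bookkeeping in the verification that $[d^+,E_k]=0$: one must carefully isolate the leading piece of $R^{(k)}D - (\pm)\tilde{D}R^{(k)}$ and confirm that the lower-order cross terms from $d^-$ and $\tilde{d}^-$ do not interfere with the identity at the top degree. Once this is pinned down, the rest of the argument is mechanical iteration parallel to Lemma~\ref{lem:ext}.
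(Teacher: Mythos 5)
Your overall approach is the right one, and it is exactly what the paper intends (the paper itself says to run the iterative method of Lemma~\ref{lem:ext}, and defers the formal argument to lemma~3.7 of \cite{OblomkovRozansky16}). The identity $R^{(k)}D+\tilde D R^{(k)} = \Psi^{(k)}F - F\Psi^{(k)}=0$, the extraction of the top-degree piece to see that the leading error is a $[d^+,\cdot]$-cycle, the appeal to $\Hom^{<0}_{d^+}\sim 0$ to kill it, the termination by finite length, and the nilpotence argument for invertibility are all correct. You even anticipate that degree bookkeeping is the delicate point; unfortunately that is exactly where the sketch is internally inconsistent, in a way that would not survive being written out.

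Two concrete problems. First, the base case: $R^{(0)}=d^--\tilde d^-$ has leading piece $d_0^--\tilde d_0^-\in\Hom^{-1}(C_\bl,C_\bl)$, not $\Hom^{-2}$, so the claimed hypothesis $R^{(0)}\in\Hom^{\le -2}$ fails already at $k=0$. Second, and more seriously, the degree of the corrector is wrong by two: in the paper's grading $d^+\in\Hom^{1}$, so if $E_k\in\Hom^{-(k+2)}$ then solving $[d^+,\Psi_{k+1}]=E_k$ forces $\Psi_{k+1}\in\Hom^{-(k+3)}$, not $\Hom^{-(k+1)}$. As written, adding a $\Psi_{k+1}\in\Hom^{-(k+1)}$ would only disturb $R$ in degree $-k$, which was already clean, and would leave $E_k$ untouched; the induction would stall. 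The consistent bookkeeping is: $R^{(k)}$ is supported in odd degrees $\le -(2k+1)$ with leading piece $E_k\in\Hom^{-(2k+1)}$; the corrector is $\Psi_{2k+2}\in\Hom^{-(2k+2)}$; and indeed only even-indexed $\Psi_i$ can be nonzero, since odd ones would break the $\ZZ_2$-parity that an isomorphism of matrix factorizations must preserve. With those replacements the argument closes up exactly as you describe.
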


  Because of the previous lemma we will use notation \(\rmk^F(f_1,\dots,f_m)\in \MF(\calz,F)\) for a matrix factorization
  from the lemma~\ref{lem:ext}.

\subsection{Knorrer periodicity}
\label{sec:knorrer-periodicity}

The  critical locus of the potential \(F=xy\) is a point \(x=y=0\) so according to our principle we expect that
the category of matrix factorizations with the potential \(xy\) is equivalent to the category of matrix factorizations
on the point. It is indeed the case and the equivalence is known under the name {\it Knorrer periodicity} and we explain
the details below.

Let us denote the Koszul matrix factorization \(\rmk[x,y]\in \MF(\cc^2,xy)\) by \(\rmk\). Then there is an exact
functor between  triangulated categories:
\[\Phi:\MF(\pt,0)\to \MF(\cc^2,xy),\quad (M,D)\mapsto (M\ot\cc[x,y],D)\ot\rmk.\]
The functor in the inverse direction is the restriction functor:
\[\Psi: \MF(\cc^2,xy)\to \MF(\pt,0),\quad (M,D)\mapsto (M|_{x=0},D|_{x=0}).\]

These functors are mutually inverse. Indeed, to show that \(\Psi\circ \Phi=1\) we observe that
\(\rmk|_{x=0}=[\cc[y]\xleftarrow{y}\cc[y]]\) which is a sky-scarper at \(y=0\). We leave it as an exercise
to a reader to show \(\Phi\circ\Psi=1\).

More generally, if \(\calz=\calz_0\ti \cc^2_{x,y}\) and \(F_0\in \cc[\calz_0]\) then there is a functor:
\(\Phi: \MF(\calz_0,F_0)\to \MF(\calz,F_0+xy)\) given by tensoring with the Koszul complex \(\rmk[x,y]\).

\begin{theorem}\cite{Orlov04} The functor \(\Phi\) is an equivalence of triangulated categories.
\end{theorem}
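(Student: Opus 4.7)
The plan is to construct an explicit quasi-inverse $\Psi:\MF(\calz, F_0+xy)\to\MF(\calz_0,F_0)$ generalizing the restriction functor of the point case, and then to check the two natural isomorphisms $\Psi\circ\Phi\simeq\id$ and $\Phi\circ\Psi\simeq\id$. Proposition~\ref{prop:crit} is the crucial input: since $x=\partial_y(F_0+xy)$ and $y=\partial_x(F_0+xy)$ both lie in the critical ideal, multiplication by $x$ and $y$ is null-homotopic on every object of $\MF(\calz,F_0+xy)$, which is exactly what is needed for a restriction functor to descend well to the homotopy category.

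First I would define $\Psi$ as the restriction of the underlying module to $\{x=0\}\subset\calz$ with residual differential $D|_{x=0}$; since $(D|_{x=0})^2=F_0$, this lands in $\MF(\calz_0\times\cc_y, F_0)$, and one identifies the essential image with $\MF(\calz_0,F_0)$ via a further Knorrer-type reduction along the $y$-direction (the potential is independent of $y$, and $y$ acts null-homotopically). The identity $\Psi\circ\Phi\simeq\id$ is then a direct calculation: the two-periodic complex $\rmk[x,y]|_{x=0}$ is the Koszul resolution $[\cc[y]\xleftarrow{y}\cc[y]]$ of the skyscraper at $y=0$, so $\Psi\Phi(\calF_0)\simeq\calF_0$, reproducing the argument given in the text for $\calz_0=\pt$.

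The main obstacle is the opposite identity $\Phi\circ\Psi\simeq\id$, which requires showing that every object of $\MF(\calz,F_0+xy)$ is naturally a Koszul tensor product over $\cc[x,y]$. Given $\calF=(M,D)\in\MF(\calz,F_0+xy)$, Proposition~\ref{prop:crit} produces odd endomorphisms $h_x,h_y$ satisfying $[D,h_x]=x$ and $[D,h_y]=y$. The strategy is to use $h_x$ and $h_y$ as Clifford-type creation and annihilation operators exhibiting $\calF$ as $\Psi(\calF)\ot\rmk[x,y]$; the formal relations $h_x^2\sim h_y^2\sim 0$ and $h_xh_y+h_yh_x\sim 1$ hold only up to further null-homotopies, and must be strictified order by order.

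The hard part will be this strictification. Each obstruction to an exact Clifford relation lies in a piece of $\und{\Hom}^\bullet(\calF,\calF)$ which is acyclic for the auxiliary differential (by the same mechanism that underlies the vanishing $\Hom^{<0}_{d^+}\sim 0$ exploited in Lemma~\ref{lem:ext}), and can be absorbed into a higher-order correction via an iterative deformation argument modeled on that lemma; the procedure terminates on each finitely generated piece because the Koszul algebra $\rmk[x,y]$ has only two nonzero exterior degrees. Once the Clifford action is strictified, the spectral decomposition of its generators splits $\calF$ as $\Psi(\calF)\ot\rmk[x,y]=\Phi\Psi(\calF)$, and the uniqueness-up-to-isomorphism of the iterative construction supplies the naturality of the equivalence.
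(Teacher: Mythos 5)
The paper does not actually prove this theorem: it cites Orlov~\cite{Orlov04} and, for the case $\calz_0=\pt$, only observes that $\rmk[x,y]|_{x=0}$ is a skyscraper (giving $\Psi\circ\Phi\simeq\id$) and leaves $\Phi\circ\Psi\simeq\id$ as an exercise. Your overall plan is the natural one in that spirit, and you correctly identify $\Phi\circ\Psi\simeq\id$ as the genuine content; your Clifford-operator strategy is a recognizable, honest approach, and it is genuinely different from Orlov's argument, which passes through the singularity category $D_{sg}$ rather than building an explicit quasi-inverse on matrix factorizations.

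However, the key step has real gaps. The claim that each obstruction ``lies in a piece of $\und{\Hom}^\bullet(\calF,\calF)$ which is acyclic, by the same mechanism as $\Hom^{<0}_{d^+}\sim 0$'' is not correct as stated: the complex $(\und{\Hom}(\calF,\calF),[D,-])$ is far from acyclic (its cohomology is the graded endomorphism algebra of $\calF$ in the homotopy category, which contains at least the identity). The reason the specific obstruction cocycles $h_x^2$, $h_y^2$, $h_xh_y+h_yh_x-1$ are null-homotopic has nothing to do with acyclicity; it is the elementary fact that $\partial^2_y(D^2)=\partial^2_x(D^2)=0$ and $\partial_x\partial_y(D^2)=1$ since $D^2=F_0+xy$ is affine-linear in $x$ and in $y$, which produces explicit homotopies $\partial_y^2D$, $\partial_x^2D$, $\partial_x\partial_yD$. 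This is a different mechanism from Lemma~\ref{lem:ext}, where one exploits a genuine grading (homological degree of a Koszul complex) with vanishing negative $\Hom$; here no such grading is specified, so the iterative ``strictification'' lacks the filtration that makes the order-by-order correction converge, and the asserted termination ``because $\rmk[x,y]$ has only two nonzero exterior degrees'' is not an argument (the exterior degree of the Koszul factor does not bound the order of the corrections you are introducing on the module $M$). A further unaddressed point: even with exact Clifford relations, one must verify that the resulting splitting $M\simeq \ker(h_x)\ot\Lambda[\theta]$ is by free modules, compatible with $D$, and natural in $\calF$ to get a functorial isomorphism $\Phi\Psi\simeq\id$; this needs to be spelled out. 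To make the proposal rigorous you should either introduce the $(x,y)$-adic filtration and carefully track the filtration degree through the correction procedure, or follow Orlov's route through singularity categories, which avoids the explicit quasi-inverse altogether.
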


\subsection{Functoriality}
\label{sec:functoriality}
Now we will use previously developed technique to define the push-forward functor for matrix factorizations.
First we discuss a construction of the push-forward for  embedding map:
\(j:\calz_0\hookrightarrow \calz\) where \(\calz=\Spec(S)\) and \(\calz_0=\Spec(R)\), \(R=S/I\).

\begin{theorem}\cite{OblomkovRozansky16} \label{thm:push}Suppose we have \(F\in S\), \(F_0=j^*(F)\) and
  \(I=(f_1,\dots,f_m)\) where \(f_i\) form a regular sequence. Then there is well-defined
  functor of triangulated categories:
  \[j_*: \MF(\calz_0,F_0)\to \MF(\calz,F)\]
\end{theorem}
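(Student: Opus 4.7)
The strategy is to model $j_*(\calF_0)$ on the Koszul-type resolution of $\calF_0$ along the regular sequence $\vec f$, and then promote the result to an honest matrix factorization with potential $F$ by the obstruction-theoretic technique of Lemma~\ref{lem:ext}.

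\emph{Construction on objects.} Given $\calF_0=(M_0,D_0)\in\MF(\calz_0,F_0)$, lift the underlying (free/projective) $R$-module to a free $\zz/2$-graded $S$-module $\tilde M$, and lift $D_0$ to an $S$-linear odd endomorphism $\tilde D_0\in\End_S(\tilde M)$. Since $D_0^{\,2}=F_0\equiv F\bmod I$, we have $\tilde D_0^{\,2}-F\in I\cdot\End_S(\tilde M)$. Form $N:=\tilde M\otimes_S\Lambda^\bullet V$ with $V=\langle\theta_1,\dots,\theta_m\rangle$ placed in odd degree, and take the first-order candidate
\[
D^{(1)} \;=\; \tilde D_0 \;+\; \sum_{i=1}^m f_i\, \frac{\partial}{\partial\theta_i}.
\]
The Koszul signs force the cross terms $\{\tilde D_0,\ f_i\,\partial/\partial\theta_i\}$ to vanish, so $(D^{(1)})^{2}=\tilde D_0^{\,2}$, which differs from $F$ by an element of $I\cdot\End_S(\tilde M)\subset I\cdot\End_S(N)$. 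A leading correction in $\theta$-degree $-1$ (obtained by writing the error as $\sum_i g_i f_i$ and adding an odd operator built from the $g_i$ and $\theta_i$) kills the top-degree error but introduces a new one of strictly lower $\theta$-degree; the regularity of $\vec f$, i.e.\ the fact that $\rmk[\vec f]$ resolves $R$, guarantees that this new error is homotopic to zero in the internal $\Hom$-complex, so the iteration of Lemma~\ref{lem:ext} can continue. The process terminates after at most $m$ steps because $\Lambda^\bullet V$ has length $m$, producing a differential $D$ on $N$ with $D^2=F$; set $j_*(\calF_0):=(N,D)$.

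\emph{Morphisms and triangulated structure.} A closed morphism $\psi_0\colon\calF_0\to\calF_0'$ is lifted to an $S$-linear $\tilde\psi\colon\tilde M\to\tilde M'$ that intertwines $\tilde D_0$ and $\tilde D_0'$ only modulo $I$. The same descending-degree obstruction argument extends $\tilde\psi$ to a closed morphism $j_*\psi_0\colon(N,D)\to(N',D')$, and null-homotopies are lifted and corrected in the same way. Distinguished triangles are preserved because the construction commutes with mapping cones term-by-term, so $j_*$ is exact.

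\emph{Main obstacle: well-definedness.} \emph{A priori} the output depends on the choices of $\tilde M$, of $\tilde D_0$, and of the null-homotopies produced at each stage of the iteration. The natural tool is the uniqueness-up-to-isomorphism lemma stated right after Lemma~\ref{lem:ext}: any two valid choices are linked by an automorphism of $N$ of the form $1+(\text{terms of lower }\theta\text{-degree})$. The hard part of the proof will be assembling these isomorphisms coherently --- compatibly with composition of morphisms and with homotopy equivalence --- so that $j_*$ descends to a well-defined exact functor between the homotopy categories $\MF(\calz_0,F_0)\to\MF(\calz,F)$.
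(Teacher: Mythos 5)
Your proposal is correct and takes essentially the same route as the paper: lift $(M_0,D_0)$ to a free $S$-module with an odd endomorphism, tensor with the Koszul resolution of $R$ along $\vec f$, and complete the differential by the iterative obstruction method of Lemma~\ref{lem:ext}, with uniqueness controlled by the follow-up lemma and the extension to morphisms and homotopies handled by the same descending-degree argument (the paper defers that last part to lemma 3.7 of \cite{OblomkovRozansky16}).
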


Given an element \(\calf=(M,D)\in \MF(\calz_0,F_0)\) 
let us explain the construction of the element \(j_*(\calf)=\widetilde{F}\in \MF(\calz,F)\).
Since \(M=R^n\) for some \(n\) we can lift it to the module \(\tilM=S^n\) as well as the differential
to a \(\zz_2\)-graded endomorphism \(\tilD\in \Hom_S(S^n,S^n)\), \(\tilD|_{\calz_0}=D\).
Since \(f_i\) form a regular sequence we can form Koszul complex \(\rmk(f_1,\dots,f_m)=(\Lambda^\bl \cc^n\ot S,d_K)\)
which is a resolution of \(S\)-module \(R\). The technique similar to the method of lemma~\ref{lem:ext}
yields

\begin{lemma}\cite{OblomkovRozansky16} There are \(d_{ij}:\tilM\ot \Lambda^\bl \cc^n\ot S\), \(i-j\in 1+\zz_{\ge 0}\) such that
  \[\widetilde{\calf}=(\Lambda^\bl\cc^n\ot S,d_K+\tilD+d^-)\in \MF(\calz,F)\]
  and the element \(\widetilde{\calf}\) is unique up  to isomorphism.
\end{lemma}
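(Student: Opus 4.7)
The plan is to adapt the iterative deformation argument of Lemma~\ref{lem:ext} to the doubled complex $E = \tilM \otimes \Lambda^\bullet \cc^n$, filtered by exterior degree, taking $d_K + \tilD$ as the unperturbed differential and $F$ as the target square. The only input needed is acyclicity of $\Hom^{<0}_{d_K}(E, E)$: since $f_1,\dots,f_m$ is regular, $(\Lambda^\bullet\cc^n\otimes S, d_K)$ is a free $S$-resolution of $R$, and tensoring with the free module $\tilM$ preserves this, so $(E, d_K)$ is quasi-isomorphic to the original MF module $M$, giving the required acyclicity statement of the form needed in Lemma~\ref{lem:ext}.

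For existence, first compute $(d_K + \tilD)^2 = \{d_K, \tilD\} + \tilD^2$. Because $\tilD^2 - F$ vanishes on $\calz_0$ it lies in $I\cdot\End(\tilM)$, and its entries written in terms of the $f_i$ exhibit it as $\{d_K,\cdot\}$ of an endomorphism of strictly lower exterior degree, modulo corrections of still lower degree; the cross term $\{d_K,\tilD\}$ already lies in strictly lower exterior degree. Hence $(d_K+\tilD)^2 - F$ is $d_K$-closed on the associated graded and of strictly lower exterior degree, so the acyclicity above produces a primitive $d^-_1$. The new error $(d_K+\tilD+d^-_1)^2 - F$ is then $d_K$-closed of still lower exterior degree, and the iteration continues exactly as in the proof of Lemma~\ref{lem:ext}. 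Since $E$ has exterior length $m$, the procedure terminates after finitely many steps, assembling the total correction $d^- = \sum_i d^-_i$ with the degree pattern $i - j \in 1 + \zz_{\ge 0}$ asserted in the statement.

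For uniqueness, given two completions $\widetilde{\calf}$ and $\widetilde{\calf}'$ with corrections $d^-$ and $\tilde{d}^-$ (both restricting to $\calf$ on $\calz_0$), construct an intertwining isomorphism $\Psi = 1 + \sum_{k\ge 1}\Psi_k$ with $\Psi_k$ strictly decreasing exterior degree, inductively: at each order the intertwining identity $\Psi\circ(d_K+\tilD+d^-) = (d_K+\tilD+\tilde d^-)\circ\Psi$ collapses to an equation of the form $\{d_K,\Psi_k\} = (\text{known $d_K$-closed term of lower exterior degree})$, solved by the same acyclicity. This is precisely the mechanism of the uniqueness lemma stated immediately after Lemma~\ref{lem:ext}.

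The main obstacle is purely bookkeeping: one has to set up the exterior-degree filtration on $E$ and $\End(E)$ so that $\tilD$ and $F$ sit in filtration degree zero while $d_K$ and every correction strictly decreases it, and then verify at each stage that the current obstruction is $d_K$-closed on the correct associated-graded piece (not merely on the total complex), so that the acyclicity of $\Hom^{<0}_{d_K}$ delivers the next primitive. Once this filtration discipline is fixed, the entire argument is a mechanical repetition of the deformation-theoretic iteration from Lemma~\ref{lem:ext}; the detailed formal execution is given in \cite{OblomkovRozansky16}.
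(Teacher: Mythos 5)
Your proposal correctly reconstructs the iterative deformation-theoretic argument that the paper gestures at (and defers to Lemma~3.7 of \cite{OblomkovRozansky16} for details): filter by Koszul degree, take $d_K$ as the leading differential, use the fact that $(E,d_K)$ is a free $S$-resolution of $M$ so that $\Hom^{<0}_{d_K}(E,E)$ is acyclic, kill the zeroth-order obstruction by using that $\tilD^2 - F$ restricts to zero on $\calz_0$, and iterate — exactly the mechanism of Lemma~\ref{lem:ext} and of the uniqueness lemma that follows it. One minor bookkeeping slip: with the usual Koszul sign convention $\{d_K,\tilD\}$ is actually identically zero (the two operators act on disjoint tensor factors), not merely of lower degree, and the corrections $d^-$ \emph{raise} the Koszul exterior degree (they are primitives for $\{d_K,-\}$, which lowers it), so your ``strictly lower exterior degree'' should read ``strictly higher''; neither point affects the substance of the argument.
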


To complete proof of the theorem~\ref{thm:push} we need to show that the construction of \(j^*\) extends to the
spaces of the morphisms between the objects and to the space of homotopies between the morpshism, it is shown
in lemma 3.7 of \cite{OblomkovRozansky16} and we refer interested reader there for the technical details.

Unlike push-forward the pull-back functor is rather elementary. Suppose we have \(f:\calz\to\calz_0\) a morphism
of affine varieties and \(F=f^*(F_0)\), \(F_0\in \cc[\calz_0]\).  Since pull-back of a free module is free, we have
a well-defined functor:
\[f^*: \MF(\calz_0,F_0)\to \MF(\calz,F).\]

Finally, let us remark that the above defined pull-back and push-forward functors satisfy the base change relation
for commuting squares of maps.

\subsection{Equivariant matrix factorizations}
\label{sec:equiv-matr-fact}

Matrix factorization is a natural object attached to a function on the affine manifold. However limiting
yourself to only affine manifold is  frustrating, so one would like to
develop a theory of matrix factorizations on quasi-projective manifolds. There are some
proposals in the literature for such theory, see for example \cite{PolishchukVaintrob11}.

In our work \cite{OblomkovRozansky16} we chose
an approach that is probably more limited than the one from \cite{PolishchukVaintrob11} but has an advantage of being
computation friendly. So in \cite{OblomkovRozansky16} to explore matrix factorizations on the manifolds
that are group quotients of the affine manifolds, we develop theory of equivariant matrix factorizations.
In this section  we motivate our definitions and outline the ingredients of the construction from \cite{OblomkovRozansky16}.

Suppose the affine manifold \(\calz\) has an action of an algebraic group \(H\) and \(F\in \cc[\calz]^H\). Then one can say
the matrix factorization 
\(\calf=(M,D)\in \MF(\calz,F)\) is {\it strongly \(G\)-equivariant} if \(
M\) is endowed with \(H\)-representation structure and the differential \(D\) is \(H\)-equivariant. Let us denote
the set of strongly equivariant matrix factorizations by \(\MF_H^{str}(\calz,F)\). By requiring the morphism between the
objects and the homotopies between the morphisms to be \(H\)-equivariant we can provide \(\MF_H^{str}(\calz,F)\) with the
structure of the triangulated category.

However, the notion of strong equivariance turns out to be too restrictive. Indeed, one of the key tools in our arsenal
is the extension lemma~\ref{lem:ext} together with the push-forward functor. So we would like to have analog of
lemma~\ref{lem:ext} in the equivariant setting, for the \(H\)-equivariant ideal \(I=(f_1,\dots,f_m)\) with \(f_i\) forming a
regular sequence. Unfortunately, the proof of the lemma fails in the strongly equivariant setting because we can not guarantee
that the homotopies in the iterative construction of proof are equivariant. If \(H\) is reductive, we can save the proof
by averaging along the maximal compact subgroup of \(H\). But for non-reductive group we need a weaker notion of
equivariance that relies on the \ChE complex explained below.

Let $\frh$ be the Lie algebra of  \(H\). Chevalley-Eilenberg complex
 $\CE_\frh$ is the complex $(V_\bullet(\frh),d)$ with $V_p(\frh)=U(\frh)\otimes_\CC\Lambda^p \frh$ and differential $d_{ce}=d_1+d_2$ where:
 \def\dtheta{d}
 $$ d_1(u\otimes x_1\wedge\dots \wedge x_p)=\sum_{i=1}^p (-1)^{i+1} ux_i\otimes x_1\wedge\dots \wedge \hat{x}_i\wedge\dots\wedge x_p,$$
 $$ d_2(u\otimes x_1\wedge\dots \wedge x_p)=\sum_{i<j} (-1)^{i+j} u\otimes [x_i,x_j]\wedge x_1\wedge\dots \wedge \hat{x}_i\wedge\dots\wedge \hat{x}_j\wedge\dots \wedge x_p,$$

 Let us denote by $\Delta$ the standard map $\frh\to \frh\otimes \frh$ defined by $x\mapsto x\otimes 1+1\otimes x$.
 Suppose $V$ and $W$ are modules over the Lie algebra $\frh$ then we use notation
 $V\odel W$ for  the $\frh$-module which is isomorphic to $V\otimes W$ as a vector space, the $\frh$-module structure being defined by  $\Delta$. Respectively, for a given $\frh$-equivariant matrix factorization $\calF=(M,D)$ we denote by $\CE_{\frh}\odel \calF$
 the $\frh$-equivariant matrix factorization $(CE_\frh\odel\calF, D+d_{ce})$. The $\frh$-equivariant structure on $\CE_{\frh}\odel \calF$ originates from the
 left action of $U(\frh)$ that commutes with right action on $U(\frh)$ used in the construction of $\CE_\frh$.

 A slight modification of the standard fact that $\CE_\frh$ is the resolution of the trivial module implies that \(\CE_\frh\stackon{$\otimes$}{$\scriptstyle\Delta$} M\) is a free resolution of the
$\frh$-module $M$.

Now we about to define a new category whose objects we refer to as {\it weakly equivariant matrix factorizations}. The objects of this category \(\MF_{\frh}(\calZ,W)\) are  triples:
\[\mathcal{F}=(M,D,\partial),\quad (M,D)\in\MF(\calZ,W) \]
where $M=M^0\oplus M^1$ and $M^i=\CC[\calZ]\otimes V^i$, $V^i \in \Mod_{\frh}$,
$\partial\in \oplus_{i>j} \Hom_{\CC[\calZ]}(\Lambda^i\frh\otimes M, \Lambda^j\frh\otimes M)$ and $D$ is an odd endomorphism
$D\in \Hom_{\CC[\calZ]}(M,M)$ such that
$$D^2=F,\quad  D_{tot}^2=F,\quad D_{tot}=D+d_{ce}+\partial,$$
where the total differential $D_{tot}$ is an endomorphism of $\CE_\frh\odel M$, that commutes with the $U(\frh)$-action.


Note that we do not impose the equivariance condition on the differential $D$ in our definition of matrix factorizations. On the other hand, if $\calF=(M,D)\in \MF^{str}(\calZ,F)$ is a matrix factorization with
$D$ that commutes with $\frh$-action on $M$ then $(M,D,0)\in \MF_\frh(\calZ,F)$.

There is a  forgetful map for the objects of the categories  $\mathrm{Ob}(\MF_\frh(\calZ,F))\to
\mathrm{Ob}(\MF(\calZ,F)$ that forgets about the correction differentials:
$$\calF=(M,D,\partial)\mapsto \calF^\sharp:=(M,D).$$

Given two $\frh$-equivariant matrix factorizations $\calF=(M,D,\partial)$ and $\tilde{\calF}=(\tilde{M},\tilde{D},\tilde{\partial})$ the space of morphisms $\Hom(\calF,\tilde{\calF})$ consists of
homotopy equivalence classes of elements $\Psi\in \Hom_{\CC[\calZ]}(\CE_\frh\odel M, \CE_\frh\odel \tilde{M})$ such that $\Psi\circ D_{tot}=\tilde{D}_{tot}\circ \Psi$ and $\Psi$ commutes with
$U(\frh)$-action on $\CE_\frh\odel M$. Two maps $\Psi,\Psi'\in \Hom(\calF,\tilde{\calF})$ are homotopy equivalent if
there is \[ h\in  \Hom_{\CC[\calZ]}(\CE_\frh\odel M,\CE_\frh\odel\tilde{M})\] such that $\Psi-\Psi'=\tilde{D}_{tot}\circ h- h\circ D_{tot}$ and $h$ commutes with $U(h)$-action on  $\CE_\frh\odel M$.

 Given two $\frh$-equivariant matrix factorizations $\calF=(M,D,\partial)\in \MF_\frh(\calZ,F)$ and $\tilde{\calF}=(\tilde{M},\tilde{D},\tilde{\partial})\in \MF_\frh(\calZ,\tilde{F})$
 we define $\calF\otimes\tilde{\calF}\in \MF_\frh(\calZ,F+\tilde{F})$ as the equivariant matrix factorization $(M\otimes \tilde{M},D+\tilde{D},\partial+\tilde{\partial})$.



We define an embedding-related push-forward in the case when the subvariety $\calZ_0\xhookrightarrow{j}\calZ$
is the common zero of an ideal $I=(f_1,\dots,f_n)$ such that the functions $f_i\in\CC[\calZ]$ form a regular sequence. We assume that the Lie algebra $\frh$ acts on $\calZ$ and $I$ is $\frh$-invariant. In section~3 of \cite{OblomkovRozansky16} we use technique
similar to the proof of lemma~\ref{lem:ext} to show that there is a well-defined functor:
\[
j_*\colon \MF_{\frh}(\calZ_0,W|_{\calZ_0})\longrightarrow
\MF_{\frh}(\calZ,W),
\]
for any $\frh$-invariant element $W\in\CC[\calZ]^\frh$.

For our construction of the convolution algebras we also need to define the equivariant  push-forward along a projection.
 Suppose \(\calZ=\mathcal{X}\times\mathcal{Y}\), both \(\calZ \) and \(\mathcal{X}\) have \(\frh\)-action and
the projection \(\pi:\mathcal{Z}\rightarrow\mathcal{X}\) is \(\frh\)-equivariant. Then
for any $\frh$ invariant element $w\in\CC[\calX]^\frh$ there is a functor
\(\pi_{*}\colon \MF_{\frh}(\calZ, \pi^*(w))\rightarrow \MF_{\frh}(\mathcal{X},w)
\)
which simply forgets the action of $\CC[\calY]$.



Finally, let us discuss the quotient map. The complex \(\CE_\frh\) is a resolution of the trivial \(\frh\)-module by free modules. Thus the correct derived
version of taking \(\frh\)-invariant part of the matrix factorization \(\mathcal{F}=(M,D,\partial)\in\MF_\frh(\calZ,W)\), \(W\in\CC[\calZ]^\frh\) is
\[\CE_\frh(\mathcal{F}):=(\CE_\frh(M),D+d_{ce}+\partial)\in\MF(\calZ/H,W),\]
where \(\calZ/H:=\mathrm{Spec}(\CC[\calZ]^\frh )\) and use the general definition for an \(\frh\)-module \(V\):
\[\CE_\frh(V):=\Hom_\frh(\CE_\frh,\CE_\frh\odel V).\]

\section{Braid groups and matrix factorizations}
\label{sec:braid-groups-matrix}

In this section we explain a construction for an action of the finite and affine braid groups on the particular
categories of the matrix factorizations from \cite{OblomkovRozansky16}. First we explain a construction for the convolution algebra on our
categories of matrix factorizations. Then we explain a categorization of the homomorphism from the affine
braid group to the finite braid group from \cite{OblomkovRozansky17}.

\subsection{Convolution product}
\label{sec:convolution-product}

Let us first motivate the definition of the space that host our categories of matrix factorizations. Somewhat abusing notations we
introduce space \(\sqrt{\cx}=\gl_n\ti\GL_n\ti\frn\) where \(\frn\) stands for the Lie algebra of strictly upper-triangular matrices.
We omit the sub-index since the size of the  matrices is clear from the context, we also use \(G\) and
\(\frg\) for \(\GL_n\) and \(\gl_n\) in this situation.

The space \(\sqrt{\cx}\) has the action
of the group of upper-triangular matrices \(B\) and \(G\):
\[(h,b)\cdot (X,g,Y)=(\Ad_h(X),hgb,\Ad_b^{-1}Y), \quad (h,b)\in G\times B.\]

The  flag variety \(\Fl\) is a quotient \(G/B\)  since every full flag can be
moved into the standard flag by \(G\)-action and \(B\) is the stabilizer group of
the standard flag . The  group \(B\) acts on the tangent space to \(\Fl\) at the point of
standard flag and as \(B\)-module the tangent space is equal \(\frn\).
Thus the \(B\)-quotient of \(\sqrt{\cx}\) is naturally isomorphic to the cotangent bundle to the
flag variety: \[\sqrt{\cx}/B=\frg\ti T^*\Fl\]
Thus \(G\)-action on \(\sqrt{\cx}\) induces the \(G\)-action on \(\frg\ti T^*\Fl\).

The space \(T^*\Fl\) is symplectic and the \(G\)-action respects preserves the symplectic form. Thus there is moment map
\(\mu: T^*\Fl\to \frg^*\). The trace identifies \(\frg\) with \(\frg^*\) and we can think of the moment map as \(\frg\)-linear \(B\)-invariant  function:
\[\mu: \sqrt{\cx}\to \cc,\quad \mu(X,g,Y)=\Tr(X\Ad_gY).\]

Now we can define our main space where the convolution algebra dwells. The space \(\sqrt{\cx}\) has \(B\)-invariant projection
to the first factor and our main space is the fibered product:
\[\cx:=\sqrt{\cx}\ti_{\frg}\sqrt{\cx}=\frg\ti G\ti \frn\ti G\ti \frn.\]

The space \(\cx\) has a action of \(G\times B^2\) that is induced from the \(G\ti B\) action on \(\sqrt{\cx}\),
respectively the projections \(p_1,p_2\) on two copies of \(\sqrt{\cx}\) are \(G\times B^2\)-equivariant.
The  group \(B\) is a semi-direct product \(B=T\ltimes U\) of the torus \(T\) and the group of upper-triangular
matrices \(U\).

We define our main
category to be:
\[\MF_n:=\MF_{G\times B^2}(\cx,W),\quad W=p_1^*(\mu)-p_2^*(\mu),\]
where we require the weak \(U^2\)-equivariance and strong \(G\ti T^2\)-equivariance in our category.
The strong \(G\times T^2\)-equivariance implies that all differentials in the complexes are \(G\times T^2\)-invariant.
We can combine strong \(G\ti T^2\)-equivariance with the weak \(U^2\)-equivariance since the \ChE complex for \(U^2\)
is \(G\ti T^2\)-invariant.

There is an action of \(T_{sc}=\cc^*\ti\cc^*=\cc^*_a\ti \cc^*_t\) on the space \(\sqrt{\cx}\) and the induced action on  \(\cx\):
\[(\lambda,\mu)\cdot (X,g,Y)=(\lambda^2\cdot X,g,\lambda^{-2}\mu^{2}Y).\]
The potential \(W\) is not \(T_{sc}\)-invariant, it weight \(2\) with respect to the torus \(\cc^*_t\). We require
the differentials in a curved complex from \(\MF_n\) to have weight \(1\) with respect to \(\cc^*_t\) and it has
weight \(0\) with respect to \(\cc^*_a\). To simplify notations we do  not use  any extra indices to indicate such \(T_{sc}\)-equivariance.
We also use notation
\[\mathbf{q}^k\mathbf{t}^l\cdot \calF\]
for the matrix factorization \(\calF\) with the \(k\)-twisted \(\CC_a^*\)-action and \(l\)-twisted \(\cc_t^*\)-action.

Since the space \(\cx\) has \(B^2\)-action we can also twist a matrix factorization \(\calF\) by a representation of this group.
Given a characters \(\chi_l\) and \(\chi_r\) of the left and right factor in \(B^2\), the twisted matrix factorization is
denoted by
\[\calF\langle \chi_l,\chi_r\rangle.\]

To define convolution product in category \(\MF_n\) we introduce the convolution space \(\cx_{con}\) which is a fibered product:
\[\cx_{con}:=\sqrt{\cx}\ti_\frg\sqrt{\cx}\ti_\frg\sqrt{\cx}=\frg\ti (G\ti n)^2.\]

There are three \(G\ti B^3\)-equivariant maps \(\pi_{12},\pi_{23},\pi_{13}\) and the convolution product is defined by the predictable
formula:
\[\calf\star\calg:=\pi_{13*}(\CE_{\frn^{(2)}}(\pi_{12}^*(\calf)\ot\pi_{23}^*(\calg))^{T^{(2)}}).\]

Since the projections \(\pi_{ij}\) are smooth we can apply base change formula and the standard argument, that could be found in
\cite{ChrisGinzburg}. One derives with the use of the base change that thus defined product is associative.

\subsection{Knorrer reduction}
\label{sec:knorrer-reduction}
We can apply the Knorrer periodicity discussed in  section~\ref{sec:knorrer-periodicity} to  reduce the size of our working space \(\cx\).
Indeed, the pair of space and potential:
\[\bar{\cx}=\frb\ti G\ti \frn,\quad \Wr(X,g,Y)=\Tr(X\Ad_g(Y))\]
is \(B^2\)-equivariant with respect to the action:
\[(b_1,b_2)\cdot (X,g,Y)=(\Ad_{b_1}X,b_1gb_2,\Ad_{b_2}^{-1}Y).\]
Thus we can define the category of weakly \(U^2\)-equivariant and strongly \(T^2\)-equivariant matrix factorizations:
\[\overline{\MF}_n:=\MF_{B^2}(\bar{\cx},\Wr).\]
To illustrate some of our methods we provide a proof for the equivalence in 
\begin{prop}\label{prop:Kn}
  There is an equivalence of categories:
  \[\Psi:\overline{\MF}_n\to \MF_n.\]
\end{prop}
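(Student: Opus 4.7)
The strategy is to split the equivalence into two parts: first, use the free $G$-action on $\cx$ to reduce $G\ti B^2$-equivariance of $\MF_n$ to $B^2$-equivariance on a $G$-slice, and second, apply Knorrer periodicity to eliminate the extra variables relative to $\bar{\cx}$. A dimension count supports the picture: $\dim\cx - \dim G = 3n^2 - n$ exceeds $\dim\bar{\cx} = 2n^2$ by exactly $2\dim\frn = n(n-1)$, which is the number of paired variables that should disappear under Knorrer.

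For step one, the $G$-action on $\cx$ is free on either $G$-factor, so the closed immersion $\cx_0 := \{g_2 = e\} \hookrightarrow \cx$ is a section. Pullback along this section identifies $\MF_{G\ti B^2}(\cx,W) \simeq \MF_{B^2}(\cx_0, W|_{\cx_0})$, where $\cx_0 \cong \frg \ti G \ti \frn \ti \frn$ with coordinates $(X,g,Y_1,Y_2)$ (here $g$ plays the role of $g_1$), and the restricted potential is
\[
W|_{\cx_0} = \Tr(X\Ad_g Y_1) - \Tr(X Y_2),
\]
while $B^2$ acts by its original action on $\cx$ composed with the $G$-translation needed to return to the slice $\{g_2 = e\}$.

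For step two, decompose $\frg = \frb \oplus \frn^-$ (as vector spaces) and split $X = X_\frb + X_{\frn^-}$. Since $\Tr(X_\frb Y_2) = 0$ (upper times strictly upper has zero diagonal) and $\Tr(X_{\frn^-} Z) = \langle X_{\frn^-}, Z_\frn\rangle$, where $\langle\cdot,\cdot\rangle$ is the trace pairing between $\frn^-$ and $\frn$ and $(\cdot)_\frn$ denotes projection along $\frb^-$, the potential becomes
\[
W|_{\cx_0} = \Tr(X_\frb\Ad_g Y_1) + \langle X_{\frn^-}, (\Ad_g Y_1)_\frn - Y_2\rangle.
\]
The substitution $Y_2' := Y_2 - (\Ad_g Y_1)_\frn$ provides a biregular change of variables $\cx_0 \cong \bar{\cx}\ti\frn^-\ti\frn$ under which the potential splits as $\Wr(X_\frb,g,Y_1) - \langle X_{\frn^-}, Y_2'\rangle$. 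The desired functor $\Psi$ is then the composition: the Knorrer periodicity functor $\overline{\MF}_n \to \MF_{B^2}(\bar{\cx}\ti\frn^-\ti\frn, \Wr - \langle\cdot,\cdot\rangle)$ (tensoring with the Koszul matrix factorization of the nondegenerate pairing, once for each of the $\dim\frn$ coordinate pairs), followed by transport along the change of variables to $\cx_0$, followed by the $G$-equivariant extension $\cx \cong G \ti \cx_0$. Invertibility of $\Psi$ then reduces to invertibility of Knorrer periodicity.

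The main obstacle is $B^2$-equivariance of the identification $\cx_0 \cong \bar{\cx}\ti\frn^-\ti\frn$. Neither the splitting $\frg = \frb \oplus \frn^-$ nor the change of variables $Y_2 \mapsto Y_2'$ is strictly equivariant under the unipotent factor $U^2 \subset B^2$: $\Ad_U$ does not preserve $\frb^-$ inside $\frg$, so projection along $\frb^-$ fails to commute with $\Ad_U$. This is precisely where the weakly equivariant framework of Section~\ref{sec:equiv-matr-fact} is essential: the failure of strict $U^2$-equivariance can be absorbed into correction differentials $\partial$ built iteratively via the Chevalley-Eilenberg resolution, exactly in the spirit of Lemma~\ref{lem:ext}. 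The $T^2$-equivariance, by contrast, holds on the nose because the torus respects $\frg = \frb \oplus \frn^-$. The technical heart of the proof is therefore the verification that Knorrer periodicity extends to the weakly $U^2$-equivariant category and is compatible with the non-equivariant coordinate transport, producing an honest equivalence of triangulated categories.
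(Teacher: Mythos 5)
Your approach is essentially the same as the paper's: reduce $G$-equivariance by passing to a $G$-slice (you take $g_2=e$; the paper uses the equivalent quotient map $q(X,g_1,Y_1,g_2,Y_2)=(\Ad_{g_1}^{-1}X,Y_1,g_1^{-1}g_2,Y_2)$), then split $X$ into upper- and lower-triangular pieces and remove the resulting quadratic term by Kn\"orrer periodicity. You also correctly flag that the splitting of $X$ and the linear change of variables $Y_2\mapsto Y_2'$ are only $T^2$-equivariant, not strictly $U^2$-equivariant, so the Kn\"orrer step must be carried out in the weakly equivariant category --- a subtlety the paper's brief sketch does not make explicit but which is indeed the reason the weak-equivariance framework is used.
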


\begin{proof}
  First we observe that  the group \(G\) acts freely on the space \(\cx\) hence  we can take quotient by this group. The quotient can be
  implemented with help of the map:
  \[\cx\xrightarrow{q}\cx^\circ:=\frg\ti \frn\ti G\ti \frn,\quad q(X,g_1,Y_1,g_2,Y_2)=(\Ad_{g_1}^{-1}X,Y_1,g_1^{-1}g_2,Y_2).\]
  The potential \(W^\circ(X,Y_1,g,Y_2)=\Tr(X(Y_1-\Ad_gY_2))\) is the pull-back \(W_0=q^*(W)\) and the pull-back provides an
  equivalence \(q^*: \MF_n\simeq \MF_{B^2}(\cx^0,W_0)\).

  To complete our proof we fix notations for the truncation of a square matrix \(X\):
  \[X=X_++X_{--},\quad X_+\in \frn,\quad X_{--}^t\in \frb.\]
  The potential \(W^\circ\) can be written as a sum of \(\Wr\) and a quadratic term and thus we can apply the Knorrer periodicity:
  \begin{multline*}
    \Tr(X(Y_1-\Ad_gY_2))=    \Tr((X_++X_{--})(Y_1-\Ad_gY_2))=-\Tr(X_+(\Ad_gY_2))+\\
    \Tr(X_{--}(Y_1-\Ad_gY_2))=-\Tr(X_+(\Ad_gY_2))+
    \Tr(X_{--}(Y_1-\Ad_gY_2)_+).
  \end{multline*}
  The entries of the matrices \(X_{--},Y_1-(\Ad_gY_2)_+\) are the coordinates in the direction transversal to the
  subspace \(\bar{\cx}\) with coordinates \(X_+,g,Y_2\) and the Knorrer periodicity allows us to remove the quadratic
  term in the last formula.
\end{proof}

It is explained in \cite{OblomkovRozansky16} that the category \(\overline{\MF}_n\) has a monoidal structure \(\bar{\star}\) such that
that the functor \(\Psi\) sends it to the monoidal structure \(\star\).

\subsection{Braid groups}
\label{sec:braid-groups-1}

The affine braid group $\Br_n^{aff}$
is the group of braids whose strands may also wrap around a `flag pole'.
The group is generated by the standard generators $\sigma_i$, $i=1,\dots,n-1$
and  a braid $\Delta_n$ that wraps the last stand of the braid around the flag pole:

\begin{equation*}\label{AffineBraidGenerators}
\sigma_i =
\beginpicture
\setcoordinatesystem units <.5cm,.5cm>         
\setplotarea x from -5 to 3.5, y from -2 to 2    
\put{${}^{i+1}$} at 0 1.2      %
\put{${}^{i}$} at 1 1.2      %
\put{$\bullet$} at -3 .75      %
\put{$\bullet$} at -2 .75      %
\put{$\bullet$} at -1 .75      %
\put{$\bullet$} at  0 .75      
\put{$\bullet$} at  1 .75      %
\put{$\bullet$} at  2 .75      %
\put{$\bullet$} at  3 .75      %
\put{$\bullet$} at -3 -.75          %
\put{$\bullet$} at -2 -.75          %
\put{$\bullet$} at -1 -.75          %
\put{$\bullet$} at  0 -.75          
\put{$\bullet$} at  1 -.75          %
\put{$\bullet$} at  2 -.75          %
\put{$\bullet$} at  3 -.75          %
\plot -4.5 1.25 -4.5 -1.25 /
\plot -4.25 1.25 -4.25 -1.25 /
\ellipticalarc axes ratio 1:1 360 degrees from -4.5 1.25 center
at -4.375 1.25
\put{$*$} at -4.375 1.25
\ellipticalarc axes ratio 1:1 180 degrees from -4.5 -1.25 center
at -4.375 -1.25
\plot -3 .75  -3 -.75 /
\plot -2 .75  -2 -.75 /
\plot -1 .75  -1 -.75 /
\plot  2 .75   2 -.75 /
\plot  3 .75   3 -.75 /
\setquadratic
\plot  0 -.75  .05 -.45  .4 -0.1 /
\plot  .6 0.1  .95 0.45  1 .75 /
\plot 0 .75  .05 .45  .5 0  .95 -0.45  1 -.75 /
\endpicture
\qquad\hbox{and}\qquad
\Delta_n =
~~\beginpicture
\setcoordinatesystem units <.5cm,.5cm>         
\setplotarea x from -5 to 3.5, y from -2 to 2    
\put{$\bullet$} at -3 0.75      %
\put{$\bullet$} at -2 0.75      %
\put{$\bullet$} at -1 0.75      %
\put{$\bullet$} at  0 0.75      
\put{$\bullet$} at  1 0.75      %
\put{$\bullet$} at  2 0.75      %
\put{$\bullet$} at  3 0.75      %
\put{$\bullet$} at -3 -0.75          %
\put{$\bullet$} at -2 -0.75          %
\put{$\bullet$} at -1 -0.75          %
\put{$\bullet$} at  0 -0.75          
\put{$\bullet$} at  1 -0.75          %
\put{$\bullet$} at  2 -0.75          %
\put{$\bullet$} at  3 -0.75          %
\plot -4.5 1.25 -4.5 -0.13 /
\plot -4.5 -0.37   -4.5 -1.25 /
\plot -4.25 1.25 -4.25  -0.13 /
\plot -4.25 -0.37 -4.25 -1.25 /
\ellipticalarc axes ratio 1:1 360 degrees from -4.5 1.25 center
at -4.375 1.25
\put{$*$} at -4.375 1.25
\ellipticalarc axes ratio 1:1 180 degrees from -4.5 -1.25 center
at -4.375 -1.25
\plot -2 0.75  -2 -0.75 /
\plot -1 0.75  -1 -0.75 /
\plot  0 0.75   0 -0.75 /
\plot  1 0.75   1 -0.75 /
\plot  2 0.75   2 -0.75 /
\plot  3 0.75   3 -0.75 /
\setlinear
\plot -3.3 0.25  -4.1 0.25 /
\ellipticalarc axes ratio 2:1 180 degrees from -4.65 0.25  center
at -4.65 0
\plot -4.65 -0.25  -3.3 -0.25 /
\setquadratic
\plot  -3.3 0.25  -3.05 .45  -3 0.75 /
\plot  -3.3 -0.25  -3.05 -0.45  -3 -0.75 /
\endpicture.
\end{equation*}
The defining relations for this generators are
\begin{gather*}
   \sigma_{n-1}\cdot \Delta_n\cdot \sigma_{n-1}\cdot \Delta_n=  \Delta_n\cdot \sigma_{n-1}\cdot \Delta_n\cdot\sigma_{n-1},\\
   \sigma_i\cdot \Delta_n=\Delta_n\cdot \sigma_i,\quad i<n-1,\\
   \sigma_i\cdot \sigma_{i+1}\cdot \sigma_i=\sigma_{i+1}\cdot \sigma_i\cdot \sigma_{i+1},\quad i=1,\dots, n-2,\\
   \sigma_i\cdot \sigma_j=\sigma_j\cdot \sigma_i,\quad |i-j|>1.
\end{gather*}

The mutually commuting Bernstein-Lusztig (BL) elements $\Delta_i\in \Br_n^{aff}$ are defined as follows:
\begin{equation*}\label{BraidMurphy}
\Delta_i=\sigma_{i}\cdots \sigma_{n-2}\sigma_{n-1}
\Delta_n\sigma_{n-1}\sigma_{n-2}\cdots \sigma_{i} =
~~\beginpicture
\setcoordinatesystem units <.5cm,.5cm>         
\setplotarea x from -5 to 3.5, y from -2 to 2    
\put{${}^i$} at 1 1.2
\put{$\bullet$} at -3 0.75      %
\put{$\bullet$} at -2 0.75      %
\put{$\bullet$} at -1 0.75      %
\put{$\bullet$} at  0 0.75      
\put{$\bullet$} at  1 0.75      %
\put{$\bullet$} at  2 0.75      %
\put{$\bullet$} at  3 0.75      %
\put{$\bullet$} at -3 -0.75          %
\put{$\bullet$} at -2 -0.75          %
\put{$\bullet$} at -1 -0.75          %
\put{$\bullet$} at  0 -0.75          
\put{$\bullet$} at  1 -0.75          %
\put{$\bullet$} at  2 -0.75          %
\put{$\bullet$} at  3 -0.75          %
\plot -4.5 1.25 -4.5 -0.13 /
\plot -4.5 -0.37   -4.5 -1.25 /
\plot -4.25 1.25 -4.25  -0.13 /
\plot -4.25 -0.37 -4.25 -1.25 /
\ellipticalarc axes ratio 1:1 360 degrees from -4.5 1.25 center
at -4.375 1.25
\put{$*$} at -4.375 1.25
\ellipticalarc axes ratio 1:1 180 degrees from -4.5 -1.25 center
at -4.375 -1.25
\plot -3 0.75  -3 -0.1 /
\plot -2 0.75  -2 -0.1 /
\plot -1 0.75  -1 -0.1 /
\plot  0 0.75   0 -0.1 /
\plot -3 -.35  -3 -0.75 /
\plot -2 -.35   -2 -0.75 /
\plot -1 -.35   -1 -0.75 /
\plot  0 -.35    0 -0.75 /
\plot  2 0.75   2 -0.75 /
\plot  3 0.75   3 -0.75 /
\setlinear
\plot -3.2 0.25  -4.1 0.25 /
\plot -2.8 0.25  -2.2 0.25 /
\plot -1.8 0.25  -1.2 0.25 /
\plot -.8 0.25  -.2 0.25 /
\plot  .2 0.25  .5 0.25 /
\plot -3.3 -.25  .5 -.25 /
\ellipticalarc axes ratio 2:1 180 degrees from -4.65 0.25  center
at -4.65 0
\plot -4.65 -0.25  -3.3 -0.25 /
\setquadratic
\plot  .5 0.25  .9 .45  1 0.75 /
\plot  .5  -0.25  .9 -0.45 1 -0.75 /
\endpicture.
\end{equation*}

The  finite braid group \(\Br_n\) is the subgroup of the affine braid group with the generators \(\sigma_i\), \(i=1,\dots, n-1\).
Other words, we do not allow the braids to go around the pole.

There is a natural homomorphism $\mathfrak{fgt}\colon \Br_n^{aff}\to \Br_n$, geometrically it is defined by removing the flag pole. In particular we have:
$$ \mathfrak{fgt}(\Delta_n)=1,\quad \mathfrak{fgt}(\Delta_i)=\delta_i, \quad i=1,\dots,n-1.$$
The inclusion discussed above \(i_{fin}:\Br_n\to \Br_n^{aff}\) is a section of the flag forgetting map: \(\mathfrak{frg}\circ i_{fin}=1\).

\subsection{Braid action}
\label{sec:braid-action}
In this section we outline a construction of the homomorphisms from the (affine) braid group to our convolution algebras
of matrix factorizations. For  a geometric counter-part of the map \(\frg\) we need to introduce { \it stable } versions of
our categories of matrix factorizations.

Let us define the stable locus \(\bar{\cx}^{st,\bullet}\subset \bar{\cx}\ti V\) to be set of quadruples \((X,g,Y,v)\) that satisfy a open
condition:
\begin{equation} \label{eq:stab}
\cc\langle (\Ad_g^{-1}X)_+,Y\rangle v=V.\end{equation}

There is a natural projection \(\pi_V:\bar{\cx}\ti V\to \bar{\cx}\) and there is an open embedding map \(j_{st}: \bar{\cx}^{st}\to \bar{\cx}\)
where \(\bar{\cx}^{st}=\pi_V(\bar{\cx}^{\bullet,st})\). This map induces the pull-back map:
\[j_{st}^*:\overline{\MF}_n\to \overline{\MF}_n^{st}:=\MF_{B^2}(\bar{\cx}^{st},\Wr).
\]
It is shown in \cite[Lemma 13.3]{OblomkovRozansky16} that the  category \(\overline{\MF}_n\) has a natural structure of convolution
algebra.
The  main results of the papers \cite{OblomkovRozansky16}, \cite{OblomkovRozansky17} play the crucial role in the
construction of the knot invariant in  the next section.

\begin{theorem}
  There are homomorphisms of algebras:
  \[\Phi: \Br_n\to (\overline{\MF}_n^{st},\bar{\star}),\quad \Phi^{aff}:\Br_n^{aff}\to (\overline{\MF}_n,\bar{\star}).\]
  Moreover, the pull-back \(j_{st}^*\) is the homomorphism of the convolution algebras
  and
  \[j_{st}^*\circ \Phi^{aff}=\Phi\circ\fgt.\]
\end{theorem}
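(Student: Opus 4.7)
The plan is to build $\Phi$ and $\Phi^{aff}$ by assigning an explicit Koszul-type matrix factorization to each generator, verify the defining braid relations by direct convolution calculations on the triple product $\cx_{con}$, and deduce the compatibility with $j_{st}^*$ and $\fgt$ from base change together with a single computation on the stable locus. To the simple generator $\sigma_i$ I would assign the matrix factorization supported on the subvariety of $\bar{\cx}$ where the two flag-type data meet along the $i$-th simple parabolic: its defining ideal is cut out by a $B^2$-equivariant regular sequence, and lemma~\ref{lem:ext} extends the corresponding positive Koszul complex to a matrix factorization for the potential $\Wr$. To the affine generator $\Delta_n$ I would assign the identity correspondence on $\bar{\cx}$ twisted by the last fundamental character of $T \subset B$, a matrix factorization designed so that the cyclicity condition defining $\bar{\cx}^{st}$ will trivialize the twist.

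To check the braid presentation, the commutations $\sigma_i \sigma_j = \sigma_j \sigma_i$ for $|i-j| > 1$ and $\sigma_i \Delta_n = \Delta_n \sigma_i$ for $i < n-1$ follow from base change: both orderings of the convolution are supported on the same intersection and are built from the same regular sequence. The main obstacle will be the Reidemeister-III-type relation $\sigma_i \sigma_{i+1} \sigma_i \simeq \sigma_{i+1} \sigma_i \sigma_{i+1}$, together with the mixed affine relation $\sigma_{n-1} \Delta_n \sigma_{n-1} \Delta_n \simeq \Delta_n \sigma_{n-1} \Delta_n \sigma_{n-1}$. My approach would be to compute each triple convolution on $\cx_{con}$, apply Knorrer periodicity (proposition~\ref{prop:Kn}) to descend to a smaller space where the two underlying positive Koszul complexes visibly coincide, and then invoke the uniqueness counterpart of lemma~\ref{lem:ext} to conclude that the two resulting matrix factorizations are isomorphic. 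Tracking the $T_{sc}$-equivariance and the weakly $U^2$-equivariant data through each step is the main bookkeeping task; conceptually, this mirrors the classical verification that Rouquier's complexes of Soergel bimodules satisfy the braid relations.

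For the compatibility statement, $j_{st}$ is an open embedding and convolution uses only smooth base changes and tensor products, so base change yields a natural isomorphism
\[j_{st}^*(\calf \bar{\star} \calg) \simeq j_{st}^*(\calf) \bar{\star} j_{st}^*(\calg),\]
which makes $j_{st}^*$ monoidal. The equality $j_{st}^* \circ \Phi^{aff} = \Phi \circ \fgt$ then reduces, by multiplicativity and the facts $\fgt(\sigma_i) = \sigma_i$ and $\fgt(\Delta_n) = 1$, to the single computation that $j_{st}^*(\Phi^{aff}(\Delta_n))$ is isomorphic to the monoidal unit of $\overline{\MF}_n^{st}$. On the stable locus, the spanning condition \eqref{eq:stab} exhibits a canonical nowhere-vanishing section of the line bundle associated to the last fundamental character, and this section provides the sought-after trivialization, identifying the twist with the identity correspondence. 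With the simple generators intertwined tautologically, the homomorphisms assemble into the asserted compatibility.
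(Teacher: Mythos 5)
The skeleton of your proposal---attach a Koszul-type factorization to each generator, check the braid relations by computing triple convolutions, and reduce the compatibility $j_{st}^*\circ\Phi^{aff}=\Phi\circ\fgt$ to a single computation on $\Delta_n$---does match the strategy of the paper and the works it cites, and your description of $\Phi^{aff}(\Delta_n)$ as the identity twisted by a character of $T\subset B$, with the stability condition on $\bar{\cx}^{st}$ supplying the trivializing section, is correct in spirit.

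There is, however, a genuine gap in your monoidality argument for $j_{st}^*$. Base change for the open embedding $j_{st}$ does not directly give $j_{st}^*(\calf\bar{\star}\calg)\simeq j_{st}^*(\calf)\bar{\star} j_{st}^*(\calg)$, because the squares formed by $j_{st}$ and the three projections $\pi_{12},\pi_{23},\pi_{13}$ out of the convolution space are not cartesian: the stable locus upstairs is not the preimage of $\bar{\cx}^{st}$ along any one of these projections, so restricting the projection pushforward to the stable locus does not commute with restricting the tensor factors. The paper flags exactly this point and resolves it with the shrinking lemma~\ref{lem:shrink} (lemma~12.3 of \cite{OblomkovRozansky16}): because the matrix factorizations are concentrated, up to homotopy, near the critical locus of the potential, one may remove the unstable closed complement without changing the convolution. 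Without this input your base-change step is an assertion, not an argument.

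Two smaller discrepancies with the actual construction. Your description of $\Phi(\sigma_i)$ as the output of lemma~\ref{lem:ext} applied to the Koszul resolution of a ``parabolic meeting locus'' is under-specified; already for $n=2$ the relevant closed Bruhat cell is all of $\GL_2$, so there is no nontrivial ideal to resolve. What the paper does instead is factor $\Wr$ explicitly as a product of two $B^2$-equivariant functions and take the resulting rank-one Koszul matrix factorization $\bar{\calC}_+$, which is then propagated to general $n$ by the induction functors $\overline{\ind}_k$. Finally, for the Reidemeister-III and mixed affine relations, invoking the uniqueness counterpart of lemma~\ref{lem:ext} requires the two triple convolutions to share the same nonnegative differential $d^+$ after Kn\"orrer reduction; establishing that coincidence is itself the nontrivial content of the cited calculation, so the uniqueness lemma cannot be applied for free.
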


The fact that the pull-back morphism is an algebra homomorphism relies on the following shrinking lemma, for a proof
see lemma~12.3 in \cite{OblomkovRozansky16}. 

 \begin{lemma}\label{lem:shrink} Suppose $X$ is a quasi-affine variety and $\calF=(M,D)\in \MFs(X,W)$, $W\in \CC[X]$. The elements of \(\CC[X]\) act on
   \(\MF(X,W)\) by multiplication. Let us assume that
   the elements of the ideal $I=(f_1,\dots,f_m)\subset \CC[X]$ act by zero-homotopic endomorphisms on $\calF$ and $Z'\subset X$ is the
   zero locus of $I$. Let $Z\subset X$ be a subvariety
   defined by $J=(g_1,\dots,g_n)$ such that $Z\cap Z'=\emptyset$. Then $\calF$ is homotopic to $\calF|_{X\setminus Z}$ as
   matrix factorization over $\CC[X]$.
 \end{lemma}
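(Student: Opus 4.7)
The plan is to show that the canonical restriction $j^*\colon \calF \to \calF|_{X \setminus Z}$ is a homotopy equivalence in $\MF(X,W)$, by exhibiting an element $g \in J$ whose action on $\calF$ is homotopic to $\id_\calF$ and using that $D(g) \subseteq X \setminus Z$.

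First I would use the disjointness $Z \cap Z' = \emptyset$ in $X$ to produce, after reducing to a small enough affine open in the quasi-affine $X$, a relation $1 = f + g$ with $f = \sum_i u_i f_i \in I$ and $g = \sum_j v_j g_j \in J$; this is just comaximality of $I$ and $J$ (Nullstellensatz) once $V(I) \cap V(J)=\emptyset$. Since each $f_i$ acts null-homotopically on $\calF$ by hypothesis, so does the $\CC[X]$-linear combination $f$. Hence multiplication by $g = 1 - f$ is homotopic to $\id_\calF$, so $g$ becomes invertible (in fact, equal to $\id_\calF$) in the homotopy category of matrix factorizations. It follows that $\calF \to \calF[g^{-1}] = \mathrm{colim}(\calF \xrightarrow{g} \calF \xrightarrow{g} \cdots)$ is a homotopy equivalence, since each map in the directed system is a self-homotopy-equivalence of $\calF$.

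To conclude, note that $g \in J$ vanishes on $Z$, so $D(g) \subseteq X \setminus Z$ and the restriction factors as $\calF \xrightarrow{j^*} \calF|_{X \setminus Z} \to \calF|_{D(g)} = \calF[g^{-1}]$. The composite is a homotopy equivalence by the previous paragraph. Applying the same argument to $\calF|_{X \setminus Z}$ (null-homotopies for the $f_i$ restrict, so $g$ still acts as $\id$ on $\calF|_{X \setminus Z}$ up to homotopy) shows that the second arrow is also a homotopy equivalence. By two-out-of-three in the homotopy category, $j^*$ itself is a homotopy equivalence, and $\calF$ is thereby homotopic to $\calF|_{X\setminus Z}$ as matrix factorizations over $\CC[X]$.

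I expect the main obstacle to be the first step in the genuinely quasi-affine (non-affine) case, where $\CC[X]$ does not directly detect the set-theoretic disjointness $Z \cap Z' = \emptyset$ as the algebraic identity $I + J = \CC[X]$. One resolves this by covering $X$ by principal affine opens of an ambient affine variety, producing a relation $1 = f + g$ on each such open, and then assembling the local homotopy equivalences using that the whole construction is functorial and compatible with the $\zz_2$-graded matrix factorization structure.
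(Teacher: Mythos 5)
Your core strategy---producing $g\in J$ that acts as $\mathrm{id}_{\calF}$ up to homotopy, comparing $\calF\to\calF|_{X\setminus Z}$ through the further localization $\calF|_{D(g)}$, and closing with two-out-of-three---is the right one and almost certainly matches the argument in \cite{OblomkovRozansky16}. Two points in your write-up need more care.

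First, the assertion that $\calF\to\calF[g^{-1}]$ is a homotopy equivalence ``since each map in the directed system is a self-homotopy-equivalence'' is not yet a proof: the colimit is computed in the module category, not in the homotopy category, and passing a sequential colimit through the homotopy category is not automatic. What actually makes the localization step work is an explicit contraction of the cone. Writing $f=1-g=Dh+hD$, the element $g$ acts locally nilpotently on the cokernel $M\otimes_{\CC[X]}\bigl(\CC[X][g^{-1}]/\CC[X]\bigr)$ (and on the $g$-torsion kernel), so $H:=h\cdot\sum_{i\ge0}g^{i}$ is a well-defined odd operator and $[D,H]=(1-g)\sum_{i\ge 0}g^{i}=1$ by telescoping. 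This contracts the cone and therefore shows $\calF\to\calF[g^{-1}]$ is a homotopy equivalence; your argument should go through this computation rather than a colimit formality.

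Second, the quasi-affine point you flag is a genuine gap, and the proposed fix (``cover by principal affine opens and assemble the local homotopy equivalences'') is too vague to evaluate and, I believe, cannot be made to work with the naive reading of the hypothesis. The conclusion is a global statement over $\CC[X]$, and patching homotopy equivalences across an open cover is not formal; more to the point, if the disjointness $Z\cap Z'=\emptyset$ is read inside the quasi-affine $X$, the statement can fail. Take $X=\mathbb{A}^2\setminus\{0\}$, $W=0$, $\calF$ the rank-two Koszul factorization with differential multiplication by $y$, $I=(y)$, $J=(x)$: then $Z\cap Z'=\emptyset$ in $X$, but $\calF\to\calF|_{X\setminus Z}$ induces $\CC[x]\to\CC[x,x^{-1}]$ on homology, which is not an isomorphism of $\CC[x,y]$-modules. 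The hypothesis one actually needs is the comaximality $I+J=\CC[X]$, i.e.\ disjointness of $V(I)$ and $V(J)$ in $\mathrm{Spec}\,\CC[X]$; under that hypothesis your first paragraph produces $1=f+g$ with $f\in I$, $g\in J$ globally, with no covering needed, and the argument should be run in that form rather than by gluing.
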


 Essentially the lemma says that we can shrink our ambient space to any open neighborhood of the critical locus of the
 potential and such operation does not change the corresponding category of matrix factorizations.

 Let us also remark that there is another construction  for the affine braid group action on the similar
 category of matrix factorizations in \cite{ArkhipovKanstrup15a} but the
 precise relation between our construction and result of this paper is known to the author.

\section{Knot invariants}
\label{sec:knot-invariants}

\subsection{Geometric trace operator}
\label{sec:geom-trace-oper}
Let \(\mathfrak{b}_n\), \(\mathfrak{n}_n\) be Lie algebras of the group of upper, respectively strictly-upper triangular
\(n\times n\) matrices.
The free nested Hilbert scheme $\FHilb_n^{free}$ is a $B\times \CC^*$-quotient of the sublocus
$\widetilde{\FHilb}_{n}^{free}\subset \frb_n\times\frn_n\times V_n$ of the cyclic triples
$$\widetilde{\FHilb}_{n}^{free}=\{(X,Y,v)|\CC\langle X,Y\rangle v=V_n\},$$
here \(V_n=\CC^n\).
The usual nested Hilbert scheme  $\FHilb_{n}$ is the subvariety of $\FHilb^{free}_{n}$, it is defined by the condition that 
 $X,Y$ commute.

\begin{remark}
  There is a bit of confusion  in the notations, what we denote here by \(\FHilb_n\)   is denoted in \cite{OblomkovRozansky16}
  by \(\Hilb_{1,n}\) and by \(\FHilb_n(\cc)\) in \cite{GorskyNegutRasmussen16}.
\end{remark}

The torus $T_{sc}=\CC^*\times\CC^*$ acts on $\FHilb_{n}^{free}$ by scaling the matrices. We denote by $D^{per}_{T_{sc}}(\FHilb_{n}^{free})$ a derived category of the two-periodic complexes of the $T_{sc}$-equivariant quasi-coherent sheaves on $\FHilb_{n}^{free}$. Let us also denote by $
\mathcal{B}^\vee$ the descent of the
trivial vector bundle $V_n$ on $\widetilde{\FHilb}^{free}_{n}$ to the quotient $\FHilb^{free}_{n}$. Respectively, \(\mathcal{B}\) stands for the dual of
\(\mathcal{B}^\vee\). Below we construct for every $\beta\in \Br_n$ an
element $$\mathbb{S}_\beta\in D^{per}_{T_{sc}}(\FHilb_{n}^{free})$$ such that space of hyper-cohomology of the complex:
$$\mathbb{H}^k(\mathbb{S}_\beta):=\mathbb{H}(\mathbb{S}_\beta\otimes \Lambda^k\mathcal{B}) $$
defines an isotopy invariant.

\begin{theorem}\cite{OblomkovRozansky16}\label{thm:mainOR} For any $\beta\in\Br_n$ the doubly graded space
$$ H^k(\beta):=\mathbb{H}^{(k+\textup{writh}(\beta)-n-1)/2}(\mathbb{S}_\beta)$$
is an isotopy invariant of the braid closure $L(\beta)$.
\end{theorem}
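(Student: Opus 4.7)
The plan is to reduce isotopy invariance to Markov's theorem: two braid closures $L(\beta)$ and $L(\beta')$ represent isotopic links if and only if $\beta$ and $\beta'$ are related by a finite sequence of (i) conjugation $\beta \mapsto \alpha\beta\alpha^{-1}$ in some $\Br_n$ and (ii) stabilization $\beta \mapsto \iota(\beta)\sigma_n^{\pm 1}\in\Br_{n+1}$, where $\iota:\Br_n\hookrightarrow\Br_{n+1}$ adjoins a free strand. I therefore have to show that the doubly graded space $H^k(\beta)$ is unchanged under both moves. The writhe-shift $(k+\textup{writh}(\beta)-n-1)/2$ in the statement is engineered so that the bookkeeping shifts caused by $\sigma_n \mapsto \sigma_n^{\pm1}$ and by $n\mapsto n+1$ cancel against the cohomological shifts appearing on the categorical side of Markov~II.

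For Markov I, I would exploit that $\Phi\colon\Br_n\to(\overline{\MF}_n^{st},\bar{\star})$ is a homomorphism, so $\Phi(\alpha\beta\alpha^{-1})\simeq \Phi(\alpha)\,\bar\star\,\Phi(\beta)\,\bar\star\,\Phi(\alpha)^{-1}$. The construction $\calF\mapsto \mathbb{S}_\calF := j_e(\calF)^B$ that sends $\Phi(\beta)$ to an object of $D^{per}_{T_{sc}}(\FHilb_n^{free})$ is a pushforward along a $B$-equivariant embedding $\widetilde{\FHilb}_n^{free}\to \bar{\cx}^{st}$ followed by taking $B$-invariants; by base change along the convolution diagram defining $\bar\star$, this functor identifies $\calF\bar\star\calG$ and $\calG\bar\star\calF$ at the level of their sheaf invariants. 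The twist by $\Lambda^k\calB$ and the hyper-cohomology are manifestly insensitive to the cyclic order, so $\mathbb{H}^\bullet(\mathbb{S}_{\alpha\beta\alpha^{-1}}\otimes\Lambda^k\calB)\simeq \mathbb{H}^\bullet(\mathbb{S}_\beta\otimes\Lambda^k\calB)$, and the writhe is already conjugation invariant.

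For Markov II I would use the affine lift: the homomorphism $\Phi^{aff}\colon\Br_n^{aff}\to\overline{\MF}_n$, together with the compatibility $j_{st}^*\circ\Phi^{aff}=\Phi\circ\fgt$, lets me realise the stabilized braid $\iota(\beta)\sigma_n^{\pm1}\in\Br_{n+1}$ as the image under $\fgt$ of a suitable affine braid in $\Br_{n+1}^{aff}$ involving the Bernstein--Lusztig generator $\Delta_{n+1}$. I would then compute the pushforward of $\Phi^{aff}$ of this element from $\overline{\MF}_{n+1}$ down to $\overline{\MF}_n$ by integrating out the last strand. Concretely, I expect to write the stabilized matrix factorization as a Koszul extension in the sense of lemma~\ref{lem:ext} along the normal directions of the embedding $\FHilb_n^{free}\hookrightarrow\FHilb_{n+1}^{free}$, and to identify the result with $\mathbb{S}_\beta\otimes\Lambda^\bullet\calB$, shifted by the prescribed amount. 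The factor $\mathbf{q}$/$\mathbf{t}$ contributed by the extra crossing $\sigma_n^{\pm1}$ is what forces the writhe normalisation in the definition of $H^k$.

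The main obstacle is Markov~II. The geometric picture of passing from $\Br_{n+1}^{aff}$ down to $\Br_n$ by integrating out a strand requires controlling the behaviour of the matrix factorization near the locus where the cyclicity condition~\eqref{eq:stab} for the enlarged vector space degenerates. Here I would invoke the shrinking lemma~\ref{lem:shrink}: the pairing $p_1^*\mu-p_2^*\mu$ together with the Koszul-type homotopies of section~\ref{sec:kosz-matr-fact} ensures that the ideal generated by the moment-map equations acts null-homotopically, so one may replace $\bar{\cx}$ by a convenient open neighbourhood of the critical locus in which the $(n+1)$-st strand is confined to a local model (a single copy of the $xy$-factorization of example~\ref{exm:xy}). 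Knorrer periodicity (proposition~\ref{prop:Kn}) then eliminates that local factor and identifies the result with the $n$-strand construction up to the expected shift by $\det\calB$ and the $\Lambda^\bullet\calB$ twist, which is precisely what the writhe-dependent regrading absorbs.
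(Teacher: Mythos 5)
Your overall strategy — reduce isotopy invariance to the two Markov moves — matches the paper's, and your Markov~I discussion is in the right spirit, but Markov~II is where the proposal departs from the actual proof and, as written, would not close.

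For Markov~I, you claim the identification $\mathbb{S}_{\calF\bar\star\calG}\simeq\mathbb{S}_{\calG\bar\star\calF}$ follows ``by base change along the convolution diagram.'' Base change alone does not give it. In the paper the argument passes through a cyclic triple space $\cx_3$ with three projections $pr_1,pr_2$, and the crucial step is that on the \emph{critical locus} of the pulled-back potential the identities $Y_i=\Ad_{g_i}Y_{i+1}$ force the conjugation by $g_1$ to intertwine $pr_1$ and $pr_2$; this together with the shrinking lemma~\ref{lem:shrink} produces the isomorphism of sheaves. The cyclicity is a genuinely geometric fact about the critical locus, not a formal base-change statement, so you need to supply this intermediate space and the critical-locus analysis.

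For Markov~II your plan replaces the paper's actual mechanism by a combination of the affine lift, the shrinking lemma, and Kn\"orrer periodicity. This is the wrong tool. The paper's destabilization argument proceeds by first establishing that the curved complex $\bar{\calC}_{\beta\cdot\sigma_1^{\pm1}}$ has a very specific shape: it is built from $\calC'\otimes\Lambda^\bullet V$ with $V=\CC^{n-2}$, the ``dotted'' arrows of which are Koszul differentials for the ideal $I=(g_{13},\dots,g_{1n})$, and these dotted arrows \emph{vanish} after $j_e^*$. This structural description (diagram~\eqref{dia:big}) is the essential input you are missing. With it in hand, one pushes forward along the projection $\pi\colon\FHilb_n^{free}\to\CC\times\FHilb_{n-1}^{free}$, whose fibers are $\mathbb{P}^{n-1}$. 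The destabilization is then governed by the cohomology of line bundles on projective space: $H^*(\mathbb{P}^{n-1},\calO(-l))=0$ for $1\le l\le n-1$ while it is one-dimensional for $l=0$ and $l=n$. It is this Bott-type vanishing, recorded in the corollary $\pi_*(\Lambda^k\calB_n)=\Lambda^k\calB_{n-1}$, $\pi_*(\calO_n(m)\otimes\Lambda^k\calB_n)=0$ for $m\in[-n+2,-1]$, $\pi_*(\calO_n(-n+1)\otimes\Lambda^k\calB_n)=\Lambda^{k-1}\calB_{n-1}[n]$, that kills all but the extremal term of the complex \eqref{dia:big} and produces the $k\leftrightarrow k-1$ shift distinguishing $\sigma^{+1}$ from $\sigma^{-1}$. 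Kn\"orrer periodicity does something categorically different — it eliminates a quadratic summand $xy$ of the potential by restricting to the critical locus — and cannot substitute for the cohomology computation along a $\mathbb{P}^{n-1}$-fibration. Likewise, the affine lift $\Phi^{aff}$ and the Bernstein--Lusztig generators play no role in the Markov~II proof (they are used later, for the JM localization formulas); you are conflating two separate parts of the construction. Also note the paper stabilizes at the first strand ($\sigma_1$, hence the projection forgetting the first row/column), whereas you wrote $\sigma_n$; this is cosmetic but affects which coordinate chart the $\mathbb{P}^{n-1}$ appears in.
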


The variety $\widetilde{\FHilb}_{n}^{free}$ embeds inside $\calXr$ via a map $j_e:(X,Y,v)\rightarrow (X,e,Y,v)$. The diagonal copy $B=B_\Delta\hookrightarrow B^2$
respects the embedding $j_e$ and since $j_e^*(\Wr)=0$, we obtain a functor:
$$ j_e^*: \MF_{B_n^2}(\calXr^{st},\Wr)=\overline{\MF}_n^{st}\rightarrow \MF_{B_\Delta}(\widetilde{\FHilb}_{n}^{free},0).$$
Respectively, we get a geometric version of "closure of the braid" map:
$$\mathbb{L}: \MF_{B_n^2}(\calXr^{st},\Wr)=\overline{\MF}_n^{st}\to D^{per}_{T_{sc}}(\FHilb_{n}^{free}).$$
The main result of \cite{OblomkovRozansky16} could be restated in more  geometric term via geometric trace map:
$$ \mathcal{T}r: \Br_n\rightarrow D^{per}_{T_{sc}}(\FHilb_{n}^{free}), \quad \mathcal{T}r(\beta):=\oplus_k \mathbb{L}(\Phi(\beta)\otimes \Lambda^k\mathcal{B}).$$

The  above mentioned complex \(\bbS_\beta\)  is the complex \(\mathbb{L}(\Phi(\beta))\). The differentials in
the complex \(\bbS_\beta\) are of degree \(t\) thus the differentials are invariant with respect to the anti-diagonal
torus \(T_a\). Hence the forgetful functor \(\chi: D^{per}_{T_{sc}}(\FHilb^{free})\to D^{per}_{T_a}(\FHilb^{free})\) could
be composed with \(K\)-theory functor \(\mathrm{K}: D^{per}_{T_{a}}(\FHilb^{free})\to K_{T_{a}}(\FHilb^{free})\).
The  composite functor \(\mathrm{K}\circ \chi\) is closely related to decategorification and the classical Ocneanu-Jones
trace, we  discuss the Ocneaunu-Jones trace  \(\Tr^{OJ}\) and related theorem of Markov in the next subsection.

\newcommand{\trvsp}{\mathrm{Vect-gr}}
\newcommand{\trgrh}{\mathrm{H}^{(3)}}
\begin{theorem}\cite{OblomkovRozansky16}\label{thm:HOMFLY} The composition $\mathbb{H}\circ \mathcal{T}r: \Br_n\rightarrow D^{per}_{T_{sc}}(pt)$ categorifies the Jones-Oceanu trace:
  \[\Tr^{OJ}(\beta)=\dim_{a,q}\mathrm{K}\circ\chi\circ \mathbb{H}\circ \mathcal{T}r(\beta),\]
  where the \(q\)-grading comes \(T_a\)-action and \(a\)-grading is from the exterior powers of \(\cb\)
\end{theorem}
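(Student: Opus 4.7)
The strategy is to invoke the characterization of the HOMFLYPT polynomial as the unique two-variable Markov trace on the tower of braid groups (equivalently, the unique Markov trace on the Hecke algebra tower) with prescribed values on the trivial braids. So the plan is to verify that $\dim_{a,q}\circ\mathrm{K}\circ\chi\circ\mathbb{H}\circ\mathcal{T}r$ is such a trace, and then match the normalization.

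First I would observe that the Markov-move invariance from Theorem \ref{thm:mainOR} immediately passes to $K$-theory: conjugation invariance $\mathcal{T}r(\alpha\beta)\simeq \mathcal{T}r(\beta\alpha)$ gives the trace property of the graded Euler characteristic, while the Markov II invariance (sketched in Section~\ref{sec:markov-relations}) yields the stabilization relation needed for a Markov trace. Combined, these force the functional $\beta\mapsto \dim_{a,q}\mathrm{K}(\chi(\mathbb{H}(\mathcal{T}r(\beta))))$ to depend only on the link closure $L(\beta)$ and to transform correctly under adding a free strand.

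Next I would show that the resulting functional on $\CC[\Br_n]$ factors through the finite Hecke algebra $H_n$. For this one needs the quadratic Hecke relation at the level of $K$-theory applied to $[\Phi(\sigma_i)]$. The computation of Section~\ref{sec:sample-computation}, giving $\calC_\bullet\bar\star\calC_\bullet \simeq \mathbf{q}^4\calC_\bullet\oplus \mathbf{q}^2\calC_\bullet$, is precisely the categorification of the Soergel-bimodule/Hecke quadratic relation $T_i^2 = (q^2-1)T_i+q^2$ for $n=2$; for general $n$ the analogous isomorphism in $(\overline{\MF}_n^{st},\bar\star)$ is what is needed (and follows by reducing the calculation to each pair of adjacent strands via the strict upper-triangularity of the image under $\Phi$ and the locality of $\sigma_i$). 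Decategorifying turns this isomorphism into the Hecke quadratic relation for $[\Phi(\sigma_i)]$, so $\mathrm{K}\circ\chi\circ\mathbb{H}\circ\mathcal{T}r$ descends to a functional on $H_n$.

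Having a Markov trace on $H_n$, to identify it with the HOMFLYPT trace it suffices to match two normalizations: the value on the identity braid $1_n$ and the Markov stabilization parameter. The value on $1_n$ reduces the computation to the graded Euler characteristic of $\mathcal{T}r(1_n) = \bigoplus_k \mathbb{L}(\mathrm{id})\otimes\Lambda^k\mathcal{B}$ on $\FHilb_n^{free}$, which, by standard localization on the free Hilbert scheme with respect to $T_{sc}$, gives the HOMFLYPT of the $n$-component unlink $\bigl(\tfrac{a-a^{-1}}{q-q^{-1}}\bigr)^n$ up to a prefactor $\mathbf{q}^{\mathrm{writh}}$ already absorbed in the shift $H^k = \mathbb{H}^{(k+\mathrm{writh}-n-1)/2}$. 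The stabilization constant is fixed by the same localization applied to the difference $\mathcal{T}r(\beta\sigma_n) - \mathcal{T}r(\beta)$, where the new strand contributes exactly an $\Lambda^\bullet\CC$ factor in $\mathcal{B}$; uniqueness of the Ocneanu trace then finishes the identification.

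The main obstacle is the explicit Hecke relation for $[\Phi(\sigma_i)]$: lifting the $n=2$ computation of Section~\ref{sec:sample-computation} to arbitrary $n$ requires control over the convolution product $\bar\star$ beyond the diagonal block, which in turn relies on the fact that $\Phi(\sigma_i)$ is supported on the locus where only the $i,i+1$ entries are nontrivial. The second delicate point is ensuring the $\mathbf{q},\mathbf{t}$-gradings combine into the single $q$-grading of HOMFLYPT after taking $K$-theory and restricting to the anti-diagonal torus $T_a$, which is why the $\chi$ forgetful step is important and why the $t$-weight-$1$ differentials become invisible in the final pairing.
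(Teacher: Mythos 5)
Your overall strategy is the same as the paper's: reduce to the uniqueness of the Ocneanu--Jones trace, verify the Markov-move invariance at the level of $K$-theory via Theorem~\ref{thm:mainOR}, check that the functional descends to the Hecke algebra, and match normalization. This is exactly how the paper reasons in Section~\ref{sec:oj-trace-markov}.

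However, there is an imprecision in your derivation of the Hecke quadratic relation. The isomorphism $\calC_\bullet\bar\star\calC_\bullet\simeq \mathbf{q}^4\calC_\bullet\oplus \mathbf{q}^2\calC_\bullet$ from Section~\ref{sec:sample-computation} decategorifies to the Soergel-bimodule relation $[\calC_\bullet]^2 = (q^2+q^4)[\calC_\bullet]$ for the \emph{blob}, not to the Hecke quadratic relation for $[\Phi(\sigma_i)]=[\calC_+]$. To obtain the latter you must first express $[\calC_+]$ in terms of $[\calC_\parallel]$ and $[\calC_\bullet]$, and that is exactly what the cone homotopy equivalence~\eqref{eq:trian},
\[
  [\bar{\calC}_\parallel\xrightarrow{\phi} \bar{\calC}_\bullet\langle -\chi_1,-\chi_1\rangle]\sim \mathbf{q}\mathbf{t}\cdot \bar{\calC}_+,
\]
supplies. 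After restricting to the anti-diagonal torus $T_a$ the differentials in the curved complexes become invariant, so $\mathrm{K}$ is well-defined and turns~\eqref{eq:trian} into $[\calC_+]=q^{-1}([\calC_\parallel]-[\calC_\bullet\langle -\chi_1,-\chi_1\rangle])$. Only this, combined with~\eqref{eq:blb-sqr} and the twist identities~\eqref{eq:left1},~\eqref{eq:right1}, yields the Hecke quadratic relation. Your claim ``decategorifying turns this isomorphism into the Hecke quadratic relation for $[\Phi(\sigma_i)]$'' skips this essential intermediate step; add the cone~\eqref{eq:trian} to your chain of reasoning and the argument closes. The rest of your proposal --- lifting to general $n$ via the induction functors and the locality of $\sigma_i$, and pinning down the normalization by computing the unlink and the stabilization constant via localization on $\FHilb_n^{free}$ --- matches the intended argument.
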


\subsection{OJ trace and Markov theorem}
\label{sec:oj-trace-markov}

As we discussed before every link \(L\) in \(\mathbb{R}^3\) is isotopic to the closure of a braid \(L=\rmL(\beta)\), \(\beta\in \Br_n\).
On the other hand it is clear that such a presentation is not unique. Markov theorem describes the non-uniqueness explicitly and
thus provides an algebraic description of the set \(\mathfrak{L}\) of the isotopy equivalence classes of the links.

\begin{theorem} The closure operation \(\rmL\) identifies
  the set \(\mathfrak{L}\) of isotopy class of links in \(S^3\) and the set of equivalence classes:
  \[\bigcup_n \Br_n/\sim\]
  where the equivalence relation is generated by the elementary  equivalences:
  \begin{gather}
    \label{eq:conj} \alpha\cdot \beta\sim \beta\cdot\alpha,\quad \alpha,\beta\in \Br_n\\
    \label{eq:mark} \Br_{n+1}\ni\alpha\cdot \sigma^{\pm 1}\sim \alpha,\quad \alpha\in \Br_n.
  \end{gather}
\end{theorem}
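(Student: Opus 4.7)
The plan is to prove the two directions separately: first that the elementary equivalences \eqref{eq:conj} and \eqref{eq:mark} preserve the isotopy class of the closure $\rmL(\beta)$, and then the much harder converse that any two braid presentations of isotopic links are related by a finite sequence of such moves. Combined with Alexander's theorem, which asserts surjectivity of $\rmL\colon \bigsqcup_n \Br_n \to \mathfrak{L}$, this gives the desired bijection.

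For the easy direction, conjugation invariance is a straightforward picture: given $\alpha\cdot\beta$ drawn as a braid in a cylinder, the closure operation identifies top and bottom endpoints, so cyclically rotating the braid word corresponds to an ambient isotopy of $S^3$ that rotates the solid torus. The stabilization move \eqref{eq:mark} is equally visible diagrammatically: adding a single strand that crosses over (or under) the last strand once and is then closed off creates a disk that can be contracted via a Reidemeister I move on the link diagram, so $\rmL(\alpha\cdot\sigma_n^{\pm 1})$ and $\rmL(\alpha)$ are isotopic. I would begin with Alexander's theorem, for which the cleanest argument is Vogel's algorithm or the original Alexander trick: starting from a generic link diagram, iteratively eliminate edges oriented against the chosen braiding axis by smoothing across the axis, producing an isotopic diagram presented as a closed braid.

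The hard direction is the content of Markov's theorem proper. The strategy I would follow is the one formalized by Birman--Menasco (or, in a more combinatorial spirit, by Morton and Traczyk). Choose two braids $\beta \in \Br_n$ and $\beta' \in \Br_{n'}$ with $\rmL(\beta) \cong \rmL(\beta')$, fix braid presentations as closures of solid tori $T, T'$ around a common flag pole (braid axis), and consider an ambient isotopy $F\colon S^3 \times [0,1] \to S^3$ interpolating the two link embeddings. After generic perturbation, I would analyze the one-parameter family of diagrams obtained by projecting along the braid axis, tracking the finitely many codimension-one events where the braid structure fails (appearance/disappearance of vertical tangencies, triple points, and cusps). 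The claim is that each such event can be realized by a Markov equivalence: the Reidemeister II and III moves and generic isotopies away from the axis give conjugation and braid relations, while the creation/destruction of a bigon crossing the axis is precisely a stabilization. Carefully listing these local moves and verifying each one is a Markov move constitutes the body of the proof.

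The main obstacle is controlling the global combinatorics of the isotopy: ensuring that the normalization reducing a generic one-parameter family to a sequence of local moves is bookkeeping--complete, in particular handling the interaction of the moving link with the braid axis. This is where the Birman--Menasco ``exchange move'' analysis or Morton's use of a threading circle is essential, and it is the step that the classical Markov announcement left to the reader and that was only made fully rigorous decades later. Once this local-move dictionary is established, the theorem follows by concatenating the moves across the finitely many critical events in the isotopy.
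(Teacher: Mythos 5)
The paper does not prove this statement: it is the classical Alexander--Markov theorem, stated as background (the paper cites it implicitly via the discussion of the Ocneanu--Jones trace and the references to Jones's paper), so there is no ``paper's own proof'' to compare against. Your sketch correctly identifies the three needed ingredients --- the easy verification that conjugation and (de)stabilization preserve the isotopy class of the closure, Alexander's theorem for surjectivity of $\rmL$, and the hard direction reducing a generic ambient isotopy between two closed-braid presentations to a finite sequence of conjugations and stabilizations --- and correctly attributes the rigorous modern treatments (Birman--Menasco via exchange moves, Morton--Traczyk via threading). One small point worth tightening: the stabilization move should be stated more carefully, since what you want is a Reidemeister~I kink near the braid axis together with a check that the added strand's closure bounds a disk disjoint from the rest of the link (this is where ``the last strand'' matters, and where conjugation is used to move an arbitrary strand into the last position before destabilizing). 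As you acknowledge, the analysis of critical events in a generic one-parameter family is where the genuine mathematical content lies, and your paragraph correctly flags this as the step that cannot be dispatched by diagrams alone; a complete write-up would have to carry out the full case analysis of Birman--Menasco or Traczyk, which is several pages. Since the theorem is standard and not proved in the paper, your level of detail is appropriate for a statement-with-references, but would not constitute a self-contained proof.
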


If we have homomorphism \(\Tr\) from the braid group to some field \(F\)
that respects the relations \eqref{eq:mark} then the value \(\Tr(\beta)\in F\) is  an isotopy invariant of the
closure \(\mathrm{L}(\beta)\). In practice it is hard to classify such homomorphisms, however the great discovery
of Ocneanu and Jones is that one can classify such homomorphisms if we pass to a quotient \(H_n\) of the braid group.

The Hecke algebra \(H_n\) is generated by \(g_i\), \(i=1,\dots,n-1\) modulo relations:
\[g_ig_{i+1}g_{i}=g_{i+1}g_ig_{i+1},
  \quad i=1,\dots,n-2,\]
\[g_i-g_i^{-1}=q-q^{-1},\quad i=1,\dots, n-1.\]

There is a natural algebra homomorphism $\pi:\Br_n\to H_n$, \(\sigma_i\mapsto g_i\). It is shown in  \cite{Jones87} that
there is a unique homomorphism \(\Tr^{OJ}:\bigcup_n H_n\to \QQ(a,q)\) that satisfies relations \eqref{eq:mark} and normalizing relation
\[\Tr^{OJ}(\mathbf{1})=\frac{a-a^{-1}}{q-q^{-1}}.\]
The corresponding invariant is \(\Tr^{OJ}(\beta)\in\QQ(a,q)\) is also known as HOMFLYPT invariant, \(\mathrm{HOMFLYPT}(\beta)\).

Thus
 the formula \eqref{eq:HOMFLY} from the introduction and theorem~\ref{thm:HOMFLY} state that there is a specialization of the
graded dimension of 
\(\HHH(\beta)\)
that becomes a HOMFLYPT invariant. Let us recall that the  space \(\HHH(\beta)\) up to some elementary grading shift
is equal:
\[\mathbb{H}^*(\Phi(\beta)\ot \Lambda^\bullet\calB)^B.\]

This space naturally has four gradings: \(*,\bullet\) and \(T_{sc}\)-grading. However, only three of these gradings are
invariant with respect to the Markov moves: \(*\) is not preserved by the  moves. The first grading is \(\bullet\), we call it \(a\)-grading, since it is
responsible for the \(a\)-variable in the HOMFLYPT polynomial specializations.
The  other two gradings come from \(T_{sc}\)-action:
\[\deg( X_{ij})=q^2,\quad \deg(Y_{ij})=q^{-2}t^{-2}.\]

To specialize to the HOMFLYPT polynomial we need to set \(t=-1\) or more geometrically, we need to restrict torus
\(T_{sc}\)-action on the space \(\HHH(\beta)\) to the action of the anti-diagonal torus,
we denote the specialized category by \(\overline{\MF}_n^{st,a}\). To be more precise, the category \(\MF_n^{st,a}\)
is a category of matrix factorizations on \(\overline{\cx}^{st}\) with the potential \(\Wr\) which are
\(B^2_n\)-weakly equivariant, \(G\ti T^2\ti T_a\)-strongly equivariant.

As we mentioned before this torus is special because under this specialization the differentials in the curved
complexes from \(\overline{\MF}_n^{st}\) become torus invariant, hence there is a well-defined functor:
\[\mathrm{K}: \quad \overline{\MF}_n^{st,a}\to \mathrm{K}_{T_a}(\MF_n^{st,a}).\]
This K-theory functor turns the homotopy equivalence~\eqref{eq:trian} into the relation:
\[[\calC_+]=q^{-1}([\calC_\parallel]-[\calC_\bl\langle -\chi_1,-\chi_1\rangle]).\]
Thus the combination of  the relations \eqref{eq:blb-sqr} and \eqref{eq:left1}, \eqref{eq:right1} imply  the quadratic relation in the Hecke algebra.

The  Markov move relations \eqref{eq:mark} hold for the invariant \(\HHH(\beta)\) and 
in the section~\ref{sec:markov-relations}  we  discuss the main idea of our proof of the Markov moves for \(\HHH\). We need some
details of the braid group action construction for the Markov move argument, therefore we outline the construction in the next section.



\section{Geometric realization of the affine braid group}
\label{sec:geom-real-affine}

\subsection{Induction functors}
\label{sec:induction-functors}
The standard parabolic subgroup \(P_k\) has Lie algebra generated by \(\frb\) and \(E_{i+1,i}\), \(i\ne k\).
Let us define space  \(\calXr(P_k):=\frb\times P_k \times \frn\) and let us also use notation
\(\calXr(\GL_n)\) for \(\calXr(\GL_n)\). There is a natural embedding \(\bar{i}_k:\calXr(P_k)\rightarrow\calXr\) and
a natural projection \(\bar{p}_k:\calXr(P_k)\rightarrow\calXr(\GL_k)\times\calXr(\GL_{n-k})\). The space
\(\calXr(\GL_k)\ti\calXr(\GL_{n-k})\) is equipped with  a \(B_k^2\ti B_{n-k}^2\)-invariant
potential \(\Wr^{(1)}+\Wr^{(2)}\) which is a sum of pull-backs of the potentials \(W\) along the projection
on the first and the second factors. Moreover, we have:
\begin{equation}\label{eq:WW}
  \bar{i}_k^*(\Wr)=\bar{p}_k^*(\Wr^{(1)}+\Wr^{(2)}).
\end{equation}

Since the embedding \(\bar{i}_k\) satisfies
the conditions for existence of the push-forward and the relation  \eqref{eq:WW} between the potentials holds, we can define the induction functor:
\[\overline{\ind}_k:=\bar{i}_{k*}\circ \bar{p}_k^*: \MF_{B_k^2}(\calXr(\GL_k),\Wr)\times\MF_{B_{n-k}^2}(\calXr(\GL_{n-k}),\Wr)\rightarrow\MF_{B_n^2}(\calXr(\GL_{n}),\Wr)\]

Similarly we define space  \(\calXr_{fr}(P_k)\subset\frb\times P_k \times \frn\times V\) as an open subset defined by the stability condition \eqref{eq:stab}.
The last space has a natural projection map
\[\bar{p}_k:\calXr_{fr}(P_k)\rightarrow\calXr(\GL_k)\times\calXr_{fr}(\GL_{n-k})\]
and
the embedding \(\bar{i}_k:\calXr_{fr}(P_k)\rightarrow\calXr_{fr}(\GL_{n})\) and we can define the induction functor:
\[\overline{\ind}_k:=\bar{i}_{k*}\circ \bar{p}_k^*: \MF_{B_k^2}(\calXr(\GL_k),\Wr)\times\MF_{B_{n-k}^2}(\calXr_{fr}(\GL_{n-k}),\Wr)\rightarrow\MF_{B_n^2}(\calXr_{fr}(\GL_{n}),\Wr)\]

It is shown in section 6 (proposition 6.2) of \cite{OblomkovRozansky16} that the functor \(\overline{\ind_k}\) is the homomorphism of the convolution algebras:
\[\overline{\ind}_k(\mathcal{F}_1\boxtimes\mathcal{F}_2)\bar{\star} \overline{\ind}_k(\mathcal{G}_1\boxtimes\mathcal{G}_2)=
  \overline{\ind}_k(\mathcal{F}_1\bar{\star}\mathcal{G}_2\boxtimes\mathcal{F}_2\bar{\star}\mathcal{G}_2).\]
To define the non-reduced version of the induction functors one needs to introduce the space \(\calX^\circ(\GL_n)=\frg\times \GL_n\times\frn\times\frn\) which is the
slice to \(\GL_n\)-action on the space \(\calX\). In particular, the potential \(W\) on this slice becomes:
\[W(X,g,Y_1,Y_2)=\Tr(X(Y_1-\Ad_g(Y_2))).\]
Similarly to the case of the reduced space, one can define the  space \(\calX^\circ(P_k):=\frg\times P_k\times\frn\times\frn\)  to be the locus of and the corresponding
maps \(i_k:\calX^\circ(P_k)\rightarrow\calX^\circ(\GL_n)\), \(p_k:\calX^\circ(P_k)\rightarrow\calX^\circ(\GL_k)\times\calX^\circ(\GL_{n-k})\). Thus we get a version of the induction functor for non-reduced spaces:
\[{\ind}_k:=i_{k*}\circ p_k^*: \MF_{B_k^2}(\calX(\GL_k),W)\times\MF_{B_{n-k}^2}(\calX(\GL_{n-k}),W)\rightarrow\MF_{B_n^2}(\calX(\GL_{n}),W)\]

It is shown in proposition 6.1 of \cite{OblomkovRozansky16} that the Kn\"orrer functor is compatible with the induction functor:
\[\ind_k\circ(\Phi_k\times\Phi_{n-k})=\Phi_n\circ\ind_k.\]

\subsection{Generators of the finite braid group action}
\label{sec:gener-braid-group}
Let us define \(B^2\)-equivariant embedding \(i: \calXr(B_n)\rightarrow\calXr\), \(\calXr(B):=\frb\times B\times \frn\).
The pull-back of \(\Wr\) along the map \(i\) vanishes and the embedding \(i\) satisfies the conditions for existence of the push-forward
\(i_*:\MF_{B^2}(\calXr(B_n),0)\rightarrow \MF_{B^2}(\calXr(\GL_n),\Wr)\). We denote by \(\underline{\CC[\calXr(B_n)]}\in\MF_{B^2}(\calXr(B_n),0)\) the
matrix factorization with zero differential that is concentrated  only in even homological degree. As it is shown in proposition 7.1 of \cite{OblomkovRozansky16} the
push-forward
\[\bar{\mathds{1}}_n:=i_{*}(\underline{\CC[\calXr(B_n)]})\]
is the unit in the convolution algebra. Similarly, \(\mathds{1}_n:=\Phi(\bar{\mathds{1}}_n)\) is also a unit in non-reduced case.

Let us first discuss the case of the braids on two strands. The key to construction of the braid group action in \cite{OblomkovRozansky16} is the following factorization in the case
\(n=2\):
$$\Wr(X,g,Y)=y_{12}(2g_{11}x_{11}+g_{21}x_{12})g_{21}/\det,$$
where \(\det=\det(g)\) and
$$ g=\begin{bmatrix} g_{11}&g_{12}\\ g_{21}& g_{22}\end{bmatrix},\quad X=\begin{bmatrix} x_{11}&x_{12}\\ 0& x_{22}\end{bmatrix},\quad Y=\begin{bmatrix} 0& y_{12}\\0&0\end{bmatrix}$$
Thus we can define the following strongly equivariant Koszul matrix factorization:
\[\bar{\mathcal{C}}_+:=(\CC[\calXr]\otimes \Lambda\langle\theta\rangle,D)\in\MF^{str}_{B^2}(\calXr,\Wr),\]
\[  \quad D=\frac{g_{12}y_{12}}{\det}\theta+\bigg(g_{11}(x_{11}-x_{22})+g_{21}x_{12}\bigg)\frac{\partial}{\partial\theta},\]
where \(\Lambda\langle\theta\rangle\)   is the exterior algebra with one generator.

This matrix factorization corresponds to the positive elementary braid on two strands.
Using the induction functor we can extend the previous definition to the case of the arbitrary number of strands. For that we introduce an insertion
functor:
\[\overline{\Ind}_{k,k+1}:\MF_{B_2^2}(\calXr(\GL_2),\Wr)\rightarrow\MF_{B_n^2}(\calXr(\GL_n),\Wr)\]
\[\overline{\Ind}_{k,k+1}(\calF):=\overline{\ind}_{k+1}(\overline{\ind}_{k-1}(\bar{\mathds{1}}_{k-1}\times \calF)\times\bar{\mathds{1}}_{n-k-1}),\]
and similarly we define non-reduced insertion functor \[\Ind_{k,k+1}:\MF_{B_2^2}(\calX(G_2),W)\rightarrow\MF_{B_n^2}(\calX(G_n),W).\]
Thus we define the generators of the braid group as follows:
\[\bar{\calC}_+^{(k)}:=\overline{\Ind}_{k,k+1}(\overline{\ind}_{k-1}( \bar{\calC}_+)),\quad \bar{\calC}_+^{(k)}:=\Ind_{k,k+1}(\ind_{k-1}( \calC_+)).\]

The section 11 of \cite{OblomkovRozansky16} is devoted to the proof of the braid relations between these elements:
\[\bar{\calC}^{(k+1)}_+\bar{\star}\bar{\calC}^{(k)}_+\bar{\star}\bar{\calC}^{(k+1)}_+=\bar{\calC}^{(k)}_+\bar{\star}\bar{\calC}_+^{(k+1)}
  \bar{\star}\bar{\calC}_+^{(k)},\]
\[\calC^{(k+1)}_+\star\calC^{(k)}_+\star\calC^{(k+1)}_+=\calC^{(k)}_+\star\calC_+^{(k+1)}
  \star\calC_+^{(k)}.\]

Let us now discuss the inversion of the elementary braid. In view of inductive definition of the braid group action, it is sufficient
to understand the inversion in the case \(n=2\).

Thus we define:
\[\calC_-:=\calC_+\langle-\chi_1,\chi_2\rangle\in\MF_{B^2}(\calX(\GL_2),W),\]
where \(\chi_1\), \(\chi_2\) are the standard generators of the character group of \(\CC^*\ti \CC^*=T\subset B_2 \).  The definition of \(\bar{\calC}_-\) is similar. It could be shown that the definition of \(\calC_-\) is actually symmetric with respect to the left-right twisting: \[\calC_-= \calC_+\langle\chi_2,-\chi_1\rangle.\]


\begin{theorem}
  We have:
  \begin{equation}
    \label{eq:1}
    \calC_+\star\calC_-=\mathds{1}_2.
  \end{equation}
\end{theorem}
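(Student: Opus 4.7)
The plan is to use Knörrer periodicity (Proposition~\ref{prop:Kn}) to reduce to the equivalent identity $\bar{\calC}_+ \bar{\star} \bar{\calC}_- \simeq \bar{\mathds{1}}_2$ in $\overline{\MF}_2 = \MF_{B^2}(\bar{\cx},\Wr)$. The reduced model is convenient because $\bar{\calC}_+$ has the explicit one-variable Koszul description displayed in Section~\ref{sec:gener-braid-group}, and $\bar{\mathds{1}}_2 = \bar{i}_{*}(\underline{\CC[\calXr(B_2)]})$ is a clean push-forward from the Borel locus, so the target of our comparison is transparent.

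First I would compute the tensor product $\pi_{12}^{*}(\bar{\calC}_+) \otimes \pi_{23}^{*}(\bar{\calC}_-)$ on the triple convolution space. The result is a Koszul matrix factorization with two odd generators $\theta_1,\theta_2$, whose differential couples the common $X \in \frb$ to the left copy of $(g,Y)$ via the factorization of $\Wr$ used in $\bar{\calC}_+$ and to the right copy via the twisted version of the same factorization. The character twist $\langle -\chi_1, \chi_2\rangle$ defining $\bar{\calC}_-$ from $\bar{\calC}_+$ modifies the transformation properties of the middle $(g,Y)$ block in exactly the way needed for the next step.

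Next I would carry out the push-forward $\bar{\pi}_{13,*}$ combined with the Chevalley–Eilenberg/invariants procedure for the middle $\frn^{(2)}$ and $T^{(2)}$. The coefficients of $\theta_1$ and $\theta_2$, restricted to the diagonal $g^{(\textup{left})} = g^{(\textup{right})}$, produce a pair of mutually dual Koszul factors in the middle variables, precisely because of the character twist. Using the shrinking lemma~\ref{lem:shrink}, I would restrict to the open subset of the convolution space where the relevant middle linear factor is invertible; on this locus, one pair of Koszul factors can be contracted by a Knörrer-type move, leaving a matrix factorization whose remaining Koszul generators cut out the diagonal copy of $\calXr(B_2)$ inside $\bar{\cx}$.

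The main obstacle is then identifying the result with $\bar{\mathds{1}}_2$ itself, rather than with a nontrivial character twist or homological shift of it. The twist $\langle -\chi_1, \chi_2 \rangle$ must precisely compensate the Jacobian and grading factors introduced by the Koszul contraction and the $\CE_{\frn^{(2)}}$-resolution, which is a delicate character-bookkeeping calculation. The uniqueness of Koszul extensions (the lemma following~\ref{lem:ext}) then reduces the final comparison to matching the linear parts and the $(B^2 \times T_{sc})$-characters of the two sides. Once the reduced identity is established, the original claim $\calC_+ \star \calC_- = \mathds{1}_2$ follows by applying the equivalence $\Psi$ of Proposition~\ref{prop:Kn}, which intertwines $\bar{\star}$ with $\star$ and carries $\bar{\mathds{1}}_2$ to $\mathds{1}_2$.
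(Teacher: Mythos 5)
The paper states this theorem without proof, deferring the detailed argument to \cite{OblomkovRozansky16}, so there is no internal argument to compare against line by line. Your outline does follow the paper's general methodology as practiced in the sample computation of Section~\ref{sec:sample-computation} (Kn\"orrer reduction via Proposition~\ref{prop:Kn}, Koszul manipulations on the convolution space, the shrinking lemma~\ref{lem:shrink}), and the skeleton is plausible. But several steps are left as assertions, and one intermediate step appears to be misdescribed.

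The essential gap is that the ``delicate character-bookkeeping calculation'' is explicitly deferred, although that is precisely where the theorem succeeds or fails: the outcome must be $\bar{\mathds{1}}_2$ on the nose, with no residual $\mathbf{q}$, $\mathbf{t}$, or $\langle\chi_i,\chi_j\rangle$ twist, and this requires tracking the $B^2\times T_{sc}$-weights of every Koszul generator through the $\CE_{\frn^{(2)}}$-step and through each contraction. Separately, the phrase ``restricted to the diagonal $g^{(\textup{left})}=g^{(\textup{right})}$'' does not describe a step of the convolution construction in Section~\ref{sec:convolution-product}: the product integrates out a middle $Y$ variable and composes the group elements via $\pi_{13}$, with no diagonal restriction imposed on the group factors. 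That locus appears only as (part of) the critical locus of the intermediate potential, and passing to a neighbourhood of it is legitimate via the shrinking lemma only after one verifies that the removed closed set indeed misses the critical locus of the relevant potential --- a check that is not sketched. Finally, the identification of the remaining Koszul generators with those cutting out the diagonal copy of $\calXr(B_2)$ is the conclusion you want, not an intermediate fact that comes for free from a ``Kn\"orrer-type move.''

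A cleaner route using material already present in the paper: the cone presentation~\eqref{eq:trian} exhibits $\mathbf{q}\mathbf{t}\cdot\bar{\calC}_+$ as the cone of $\phi\colon\bar{\calC}_\parallel\to\bar{\calC}_\bullet\langle-\chi_1,-\chi_1\rangle$, and $\bar{\calC}_\parallel$ is the convolution unit $\bar{\mathds{1}}_2$. Twisting by $\langle-\chi_1,\chi_2\rangle$ gives the analogous cone presentation of $\bar{\calC}_-$, so $\bar{\calC}_+\bar{\star}\bar{\calC}_-$ is a double cone whose entries are convolutions of unit twists and $\bar{\calC}_\bullet\bar{\star}\bar{\calC}_\bullet$; the latter is exactly the computation carried out in Section~\ref{sec:sample-computation}, together with the twist relations~\eqref{eq:left1},~\eqref{eq:right1}. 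This reduces the whole verification to telescoping a $2\times 2$ array of already-computed terms, avoids a fresh Koszul contraction, and makes the character bookkeeping explicit rather than an acknowledged obstacle.
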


\section{Sample computation}
\label{sec:sample-computation}
In this section  we would like to show an example of the convolution algebra computations. But before I would
expand a little bit our discussion of the basic matrix factorizations in the case of \(n=2\).

\subsection{Basic matrix factorizations of rank $2$}
\label{sec:basic-matr-fact}

We have shown in the previous section that the potential \(\Wr\) is a product of three factors and we
used this fact to define the matrix factorization \(\bar{\calC}_+\). However, it is clear that there are
two more natural  matrix factorizations for this potential:
\[\bar{\mathcal{C}}_\parallel:=(\CC[\calXr]\otimes \Lambda\langle\theta\rangle,D_\parallel,0,0),\quad \bar{\mathcal{C}}_\bullet:=(\CC[\calXr]\otimes \Lambda\langle\theta\rangle,D_\bullet,0,0)\in\MF_{B^2}(\calXr,\Wr),\]
\[  D_\parallel=\frac{g_{21}}{\det}\theta+y_{12}\tilde{x}_0\frac{\partial}{\partial\theta},\quad D_\bullet= \frac{g_{21}}{\det}\tilde{x}_0\theta+y_{12}\frac{\partial}{\partial\theta},\quad
\tilde{x}_0=g_{11}(x_{11}-x_{22})+g_{21}x_{12}.\]

One of the matrix factorizations is actually a cone of the morphism between the other two:
\begin{equation}\label{eq:trian}
  [\bar{\calC}_\parallel\xrightarrow{\phi} \bar{\calC}_\bullet\langle -\chi_1,-\chi_1\rangle]\sim \mathbf{q}\mathbf{t}\cdot \bar{\calC}_+
  \end{equation}
with map \(\phi\) defined by
\[
  \begin{tikzcd}
    \mathbf{t}^{-1}\cdot R\langle \chi_2,-\chi_1\rangle\arrow[d, "g_{21}", shift left]\arrow[r,"1"]&
    R\langle \chi_2,-\chi_1\rangle\arrow[d,"g_{21}\tilde{x}_0",shift left]\\
    R\arrow[u,"y_{12}\tilde{x}_0",shift left]\arrow[r,"\tilde{x}_0"]&R\langle 0,-\chi_1\rangle\arrow[u,"y_{12}",shift left],
\end{tikzcd}
\]
where \(R=\CC[\overline{\cx}]\).
This relation  is crucial for our discussion of the connection with the Oceanus-Jones traces

\subsection{Details on the convolution product}
\label{sec:deta-conv-prod}

The convolution product inside the category \(\MF_{B^2}(\calXr,\Wr)\) is a bit tricky to define and we refer reader to
our paper \cite{OblomkovRozansky16} where the convolution product is constructed and used
for the computations for \(n=3\). On the other  hand the space \(\calX\) is bigger than the space \(\calXr\) but the construction of the
convolution is more straightforward. The space \(\calX^\circ:=\calX/\GL_n=\frg\ti\frn\ti \GL_n\ti\frn\) is intermediate between these
two spaces and we  choose to work with this slightly bigger space to
make our exposition simpler.

The  space \(\calX^\circ\) and the relevant potential \(W^\circ\) appeared already in the proof of the proposition~\ref{prop:Kn}.
Let us spell out the definition of the convolution structure for the elements \(\calF,\calG\in \MF_{B^2}(\calX^\circ,W^\circ)\):
\[\calF\star\calG:=\pi_{13*}^\circ(\CE(\pi^{\circ*}_{12}(\calF)\ot\pi_{23}^{\circ*}(\calG))),\]
where we used the convolution space \(\cx_{cnv}^\circ:=\frg\ti\frn\ti\GL_n\ti\frn\ti\GL_n\ti\frn\) and the \(B^3\)-equivariant maps are
\[\pi_{12}^\circ(X,Y_1,g_{12},Y_2,g_{23},Y_3)=(X,Y_1,g_{12},Y_2),\quad \pi_{23}^\circ(X,Y_1,g_{12},Y_2,g_{23},Y_3)=(\Ad_{g_{12}}X,Y_2,g_{23},Y_3),\]
\[ \pi_{13}^\circ(X,Y_1,g_{12},Y_2,g_{23},Y_3)=(X,Y_1,g_{12}g^{-1}_{23},Y_3).\]

To write the versions \(\calC_\parallel,\calC_\bullet,\calC_+\) of the matrix factorizations from above we need more precise notations
for the Koszul matrix factorizations. We use the matrix notation
\[\begin{bmatrix} a_1&b_1&\theta_1\\
    \vdots&\vdots&\vdots\\
    a_m&b_m&\theta_m
   \end{bmatrix}.
\]
for the matrix factorization from \(\MF(X,F)\) with the differential \(D=\sum_{i=1}^ma_i\theta_i+b_i\frac{\partial}{\partial \theta_i}\) acting on
\(\cc[X]\ot\Lambda^\bullet[\theta]\).

Let us also fix coordinates on the space \(\calX^\circ=\frg\ti \frb\ti G\ti\frb\):
\[X=\begin{bmatrix}x_0+\tr/2& x_1\\ x_{-1}&-x_0+\tr/2\end{bmatrix},\quad Y_i=\begin{bmatrix}0&y_i\\0&0\end{bmatrix}\quad g=
\begin{bmatrix}
a_{11}&a_{12}\\  a_{21}&a_{22}
\end{bmatrix},
\]
where \(\tr=\tr X\). We also denote by \(\delta_1,\delta_2\) the generators of \(\Lie(U^2)\), \(U^2\subset B^2\).
We also only indicate non-trivial actions of \(\delta_i\), that is if no action of \(\delta_i\) is given then
this action is trivial.

\def\crXbl{\calC_\bullet}
\def\crXpr{\calC_\parallel}
\def\crXp{\calC_+}

\def\exa{ a }
\def\exav#1{ \exa_{#1} }
\def\exaoo{ \exav{11} }
\def\exaot{ \exav{12} }
\def\exato{ \exav{21} }
\def\exatt{ \exav{22} }

\def\xv#1{ x_{#1} }
\def\xo{ \xv{1} }
\def\xz{ \xv{0} }
\def\xmo{ \xv{-1}}

\def\yv#1{ y_{#1} }
\def\yo{ \yv{1} }
\def\yt{ \yv{2} }
\def\yi{ \yv{i} }

\def\yh{y_3}

\def\exa{ a }
\def\exav#1{ \exa_{#1} }
\def\exaoo{ \exav{11} }
\def\exaot{ \exav{12} }
\def\exato{ \exav{21} }
\def\exatt{ \exav{22} }

\def\exav#1{a_{#1}}
\def\exaoo{\exav{11}}
\def\exaot{\exav{12}}
\def\exato{\exav{21}}
\def\exatt{\exav{22}}
\def\exbv#1{b_{#1}}
\def\exboo{\exbv{11}}
\def\exbot{\exbv{12}}
\def\exbto{\exbv{21}}
\def\exbtt{\exbv{22}}
\def\excv#1{c_{#1}}
\def\excoo{\excv{11}}
\def\excot{\excv{12}}
\def\excto{\excv{21}}
\def\exctt{\excv{22}}

\def\xXtl{ \tilde{\xX} }
\def\dgBs{ \deg_{\mflg^2}}
\def\dltv#1{ \delta_{#1}}
\def\dlto{ \dltv{1} }
\def\dltt{ \dltv{2}}
\def\dlth{ \dltv{3}}

\def\txz{ \tilde{x}_0 }
\def\ttxz{ \tilde{\tilde{x}}_0 }

With this conventions we have
the matrix factorization of the identity braid has the form
\begin{equation*}
\crXpr =
\begin{bmatrix}
\xmo & \yo - \yt\exaoo^2 & \theta_1
\\
\yt\txz
& \exato & \theta_2
\end{bmatrix},\quad
\dlto\theta_1 = -2\yt\exaoo\theta_2.
\end{equation*}
The blob matrix factorization has the form
\begin{equation*}
\crXbl=
\begin{bmatrix}
\xmo & \yo - \yt\exaoo^2 & \theta_1
\\
\exato \txz
& \yt & \theta_2'
\end{bmatrix},\quad
\dlto\theta_1 = -2\exato\exaoo\theta_2'
\end{equation*}
or equivalently
\begin{equation*}
\crXbl=\begin{bmatrix}
\xmo & \yo & \theta'_1
\\
-\exaoo^2\xmo+
\exato \txz
& \yt & \theta_2'
\end{bmatrix},\quad \theta'_1=\theta_1 +\exatt^2\theta_2',\quad
\dlto\theta'_1 = 0
\end{equation*}

The matrix factorization of the positive intersection is 
\begin{equation}
\crXp=
\begin{bmatrix}
\xmo & \yo - \yt\exaoo^2 & \theta_1
\\
\txz & \exato\yt & \theta_2
\end{bmatrix}
,\quad
\dlto\theta_1 = -2\exaoo\theta_2.
\end{equation}

\subsection{Computation}
\label{sec:computation}

Now we are ready to do our sample computation.

\begin{prop} In the convolution algebra of \(\MF_{\GL_n\ti B^2}(\cx,W)\) we have:
  $$\crXbl\langle 0,\chi_1\rangle\star\crXbl\langle 0,\chi_1\rangle=\crXbl\langle\chi_1,\chi_1\rangle
  \oplus\crXbl\langle\chi_2,\chi_1\rangle.$$
\end{prop}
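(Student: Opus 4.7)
My plan is to compute $\crXbl\star\crXbl$ directly using the explicit rank-$2$ Koszul presentation from Section~\ref{sec:basic-matr-fact} together with the convolution formula from Section~\ref{sec:convolution-product}, working on the $G$-slice space $\calX^{\circ}_{cnv}=\frg\times\frn\times G\times\frn\times G\times\frn$ (equivalent via Kn\"orrer to the full $\calX_{cnv}$). Writing the coordinates as $(X,Y_1,g_{12},Y_2,g_{23},Y_3)$ and abbreviating $a=g_{12}$, $b=g_{23}$, $y'=(Y_2)_{12}$, $\yo=(Y_1)_{12}$, $y''=(Y_3)_{12}$, the pulled-back tensor product $\pi_{12}^{\circ\,*}(\crXbl\langle 0,\chi_1\rangle)\otimes\pi_{23}^{\circ\,*}(\crXbl\langle 0,\chi_1\rangle)$ is a rank-$4$ Koszul matrix factorization on $\calX^{\circ}_{cnv}$ with exterior generators $\theta_1,\theta_2',\theta_3,\theta_4'$. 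I will eliminate the middle data $(Y_2,g_{12})$ in two stages, and identify the residual Koszul factorization with the asserted direct sum.

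First, I eliminate $y'$. Gathering the $y'$-dependent terms in the total differential, one checks that $y'$ enters only through the single expression
\[
y'\cdot\Omega,\qquad \Omega \;=\; \partial_{\theta_2'}\;-\;a_{11}^{2}\,\partial_{\theta_1}\;+\;\theta_3,
\]
where $a_{11}$ is the $(1,1)$-entry of $g_{12}$. Since $\Omega$ is a single degree-$1$ element of the Clifford algebra on $\{\theta_i,\partial_{\theta_i}\}$, a linear change of odd basis recasts it as $\partial/\partial\tilde\theta$ for a fresh generator $\tilde\theta$, and Kn\"orrer periodicity in the pair $(y',\tilde\theta)$ then cancels both and drops the Koszul rank by one. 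Second, I eliminate the middle group element $g_{12}$: by the shrinking lemma~\ref{lem:shrink} I restrict to the open locus where $g_{23}$ lies in the big Bruhat cell, trivialize the middle $B$-torsor by absorbing $g_{23}$ into the composite $g=g_{12}g_{23}^{-1}$, and convert $\pi_{13*}^{\circ}$ into a pullback along the resulting section.

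After these geometric reductions, $\CE_{\frn^{(2)}}(\cdot)^{T^{(2)}}$ amounts to taking invariants under the residual middle torus $T^{(2)}$, which acts on the remaining data purely through the weight of $a_{11}$. The output is a Koszul factorization on $\calX^{\circ}$ whose underlying free module is the blob carrier tensored with the free rank-$2$ $T^{(2)}$-module spanned by $\{1,\,a_{11}\}$. Since the differential acts diagonally in this basis, this gives a direct sum decomposition into two copies of the blob factorization. The right twist $\chi_1$ rides through unchanged from the inputs, while the left twists come from the two $T^{(2)}$-weights: the summand spanned by $1$ yields $\crXbl\langle\chi_1,\chi_1\rangle$ and the summand spanned by $a_{11}$ yields $\crXbl\langle\chi_2,\chi_1\rangle$.

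The main obstacle will be the detailed bookkeeping of the $y'$-elimination. One must verify that after the Clifford basis change making $\Omega=\partial/\partial\tilde\theta$, the residual differential on the rank-$3$ Koszul matches the blob differential assembled from $\xmo$, $\yo$, $\exato\,\txz$ and $y''$, up to an invertible change of generators. This reduces to a compatibility identity between $\txz^{(1)}$ and $\Ad_a^{-1}(\txz^{(2)})|_{b=e}$, expressing that the two halves of the potential $\Wr$ glue correctly on the blob carrier after cancelling the paired variables. Once this identity is in place, the two-summand splitting is forced by the $T^{(2)}$-weight decomposition, in direct analogy with the Soergel bimodule identity $B_s\otimes_R B_s\cong B_s\oplus B_s\{2\}$ arising from the free rank-$2$ structure of $R$ over $R^s$ with basis $\{1,x_1\}$.
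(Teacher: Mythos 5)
Your first reduction (eliminating the middle $y'$-variable by a Kn\"orrer cancellation against a fresh odd generator) is morally the same as the paper's maneuver, where the authors change odd basis to $\theta_2'=\theta_2-\theta_3$ so that $y_2$ appears in only one Koszul row and then remove that row altogether.  Two small cautions there: your $\Omega=\partial_{\theta_2'}-a_{11}^2\partial_{\theta_1}+\theta_3$ mixes annihilation and creation operators (the paper's $y_2$-dependence enters through $\frac{\partial}{\partial\theta_i}$'s only), and you should also explain why the $\xmo'\theta_3$ term riding with $\theta_3$ does not obstruct the change of basis.  These are repairable, so the first half is essentially fine.

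The second half does not work as stated.  You propose to eliminate the middle group element $g_{12}$ by restricting to the big Bruhat cell of $g_{23}$ and trivializing the middle $B$-torsor, converting $\pi_{13*}^\circ$ into a pullback along a section.  That step destroys exactly the equivariant data you then try to use: in the next sentence you take $T^{(2)}$-invariants, but after trivializing the $B$-torsor there is no residual $T^{(2)}$-action left to take invariants of, and your proposed basis $\{1,a_{11}\}$ is not a set of $T^{(2)}$-semi-invariant sections of a fixed weight.  What the paper actually does after removing $y_2$ is different: it keeps the middle $\GL_2$ factor, rewrites the bottom Koszul row in terms of the composite group element $c=ab$ so that the top and bottom rows together become $\pi_{13}^{\circ*}(\crXbl)$, and then computes the Chevalley--Eilenberg homology of the surviving middle row $R''\xrightarrow{f}R''$ with respect to $\delta_2$.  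Using $\delta_2 a_{i2}=-a_{i1}$, $\delta_2 a_{i1}=0$, they find $H^0(\frn,R'')=\CC[y_1,y_2,x,a_{11},a_{21},\det^{\pm1}]$, and after extracting the $\chi_1$-isotypic part for the middle torus they get the two-dimensional space spanned by $a_{11}$ and $a_{21}$ --- \emph{not} by $\{1,a_{11}\}$.  This is exactly what produces the asserted twists, since $a_{11}$ carries left-right $B^2$-weight $\langle\chi_1,\chi_1\rangle$ and $a_{21}$ carries $\langle\chi_2,\chi_1\rangle$; your basis would produce $\langle 0,0\rangle$ and $\langle\chi_1,\chi_1\rangle$ instead, which is the wrong answer.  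Finally, you assert ``the differential acts diagonally in this basis'' without justification; the paper's argument for the splitting is the concrete observation that $f=-a_{21}(2x_0a_{11}+x_1a_{21})$ is quadratic in the $a$-entries and therefore induces the zero map on the weight-$\chi_1$ part of the CE homology.  That vanishing is the actual mechanism forcing the direct sum, and it is the piece your proposal is missing.
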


\begin{proof}
  
  Let us fix some more notation for the coordinates on the spaces that appear in our constructions. For the  group
  elements elements in the product \(\calX_{conv}^\circ=\frg\ti\frn\ti\GL_2\ti\frn\ti\GL_2\ti\frn\) we use notations \(a,b\) and
  for the non-zero elements of  upper-triangular matrices in the product  we use  \(y_1,y_2,y_3\). We also add prime 
  to the conjugate of \(X\): \(X'=\Ad_aX\).
  
  Thus the   matrix factorization \(\calC''=\pi_{12}^{\circ *}(\calC_\bullet)\ot\pi_{23}^{\circ *}(\calC_\bullet)\) is the following Koszul
  matrix factorization:
\begin{align*}
\calC''& =
\begin{bmatrix}
\xmo & \yo - \yt\exaoo^2 & \theta_1
\\
\exato(2\xz \exaoo + \xo\exato) & \yt & \theta_2
\\
\xmo' & \yt - \yh\exboo^2 & \theta_3
\\
\exbto(2\xz' \exboo + \xo'\exbto) & \yh & \theta_4
\end{bmatrix},&
\dlto\theta_1 &= -2\exaoo\theta_2,&
\dltt\theta_3 &=-2\exboo\theta_4.
\end{align*}
By making suitable linear change of \(\theta_1\mapsto \theta_1+2a_{11}\theta_2,\theta_2\mapsto\theta_2\)
and \(\theta_3\mapsto \theta_3+b_{11}\theta_4,\theta_4\mapsto\theta_4\)  we can make the first
simplification of this matrix factorization:
\begin{align*}
\calC''& =
\begin{bmatrix}
\xmo & \yo  & \theta_1
\\
-\exaoo^2\xmo+\exato(2\xz \exaoo + \xo\exato) & \yt & \theta_2
\\
\xmo' & \yt &\theta_3
\\
-\exboo^2\xmo'+\exbto(2\xz' \exboo + \xo'\exbto) & \yh & \theta_4
\end{bmatrix},& \delta_i\theta_j=0.
\end{align*}
We use the third row  to remove $\yt$ from the other rows:
\begin{align*}
\calC''& =
\begin{bmatrix}
\xmo & \yo  & \theta_1
\\
-\exaoo^2\xmo+\exato(2\xz \exaoo + \xo\exato) & 0 & \theta_2'
\\
0 & \yt &\theta_3
\\
-\exboo^2\xmo'+\exbto(2\xz' \exboo + \xo'\exbto) & \yh & \theta_4
\end{bmatrix},& \theta_2' &= \theta_2 - \theta_3.
\end{align*}
Since \(\theta_3\) is \(B^2\) invariant element, we can now remove the third row altogether and work over the
ring \(R'=\cc[\cx^{\circ}_{conv}]/(y_2)\).

We can also use  the relation
\[
-\exboo^2\xmo'+\exbto(2\xz' \exboo + \xo'\exbto) =
-\excoo^2\xmo + \excto(2\xz\excoo+\xo\excto)
\]
to arrive to 
\begin{align*}
\crXbl \star \crXbl& =
\begin{bmatrix}
\xmo & \yo  & \theta_1
\\
-\exaoo^2\xmo+\exato(2\xz \exaoo + \xo\exato) & 0 & \theta_2'
\\
-\excoo^2\xmo + \excto(2\xz\excoo+\xo\excto)
& \yh & \theta_4
\end{bmatrix}
\end{align*}

Doing couple more simple row transformations, that change the basis in the space \(\langle \theta_1,\theta'_2,\theta'_4\rangle\),
we arrive to a simplified presentation of \(\crXbl \star \crXbl\):

\begin{align*}
\calC''& =
\begin{bmatrix}
\xmo & \yo -c_{11}^2y_3 & \theta_1'
\\
\exato(2\xz \exaoo + \xo\exato) & 0 & \theta_2''
\\
 \excto(2\xz\excoo+\xo\excto)
& \yh & \theta_4'
\end{bmatrix}
\end{align*}

Now let us notice that   the top and the bottom lines of the last Koszul complex are $\delta_2$-invariant and together
they form a Koszul matrix  factorization \(\pi_{13}^{\circ,*}(\calC_\bl)\).
On the other hand  the middle line has only one non-trivial differential and
to complete the proof we  need to compute the \ChE homology
\[H^*_{\Lie}(\frn,R''\xrightarrow{f} R'')^T,\quad f=-\exato(2\xz \exaoo + \xo\exato), \]
where \(R'=R''\ot \CC[\GL_2]\) with last copy of \(\GL_2\) has coordinates \(c_{ij}\).

The  space \(\Spec(R'')\) has coordinates \(a,y_1,y_2,x\) and the Lie algebra \(\frn\) only acts on the entries of
the matrix \(a\):
\[\delta_2 a_{i2}=-a_{i1},\quad \delta_2 a_{i1}=0.\]
The differential in complex for \(H^*_{\Lie}\) is exactly \(\delta_2\) hence
\[H^0(\frn,R'')=\cc[y_1,y_2,x,a_{11},a_{21},\mathrm{det}^{\pm 1}],\quad H^1(\frn,R'')=\cc[y_1,y_2,x,a,\mathrm{det}^{\pm 1}]/(a_{11},a_{21}),\]
where \(\det=\det a\).  Now we can extract the torus invariant part:
\[(H^*(\frn,R'')\otimes \chi_1)^{T_{sc}}=(H^0(\frn,R'')\otimes \chi_1)^{T_{sc}}=\langle a_{11},a_{21}\rangle.\]

Finally, let us observe that the function \(f\) is quadratic on \(a\) hence its induced action action on \((H^*(\frn,R'')\otimes \chi_1)^{T_{sc}}\) is trivial and 
 the statement follows.

\end{proof}

Now let us  derive the formula~\eqref{eq:blb-sqr} from the above computation. For  that let us recall the stable locus
\(\bar{\cx}^{st}\) is union of two open subsets: \(U_y=\{y\ne 0\}\), \(U_x=\{(\Ad_g^{-1}X)_{12}\ne 0\}\).
On the open set \(U_y\) the matrix factorization \(\bar{\calC}_\bullet\) contracts since \(y\) is one of the differentials
of the curved complexes. Thus we can safely restrict our attention to the open locus \(U_x\) but on this locus
\((\Ad_g^{-1}X)_{12}\ne 0\). Since  the weights of \(T_{sc}\ti B^2\) on this non-vanishing elements are:
\[\mathrm{weight}( (\Ad_g^{-1}X)_{12})=\mathbf{q}^2\cdot\langle 0,-\chi_1+\chi_2\rangle,\quad
\mathrm{weight}(\det(a))=\langle\chi_1+\chi_2,\chi_1+\chi_2\rangle\]
we can trade the Borel action weight for \(\mathbf{q}\) \(\chi_1\)-shifts for \(\mathbf{q}\)-shifts:
\begin{equation}\label{eq:left1}
  \bar{\calC}_\bl\langle \chi'+\chi_1,\chi''\rangle=\mathbf{q}^{2}\bar{\calC}_\bl\langle \chi'-\chi_2,\chi''-2\chi_2\rangle,
\end{equation}
\begin{equation}\label{eq:right1}
  \bar{\calC}_\bl\langle \chi',\chi'+\chi_1\rangle=\mathbf{q}^{-2}\bar{\calC}_\bl\langle \chi',\chi''+\chi_2\rangle.
\end{equation}
Finally, we refer to the theorem~\ref{thm:kill-JM} that implies that the pull-back \(j_{st}^*\) turns the shifts
\(\langle \chi_2,0\rangle\) and \(\langle 0,\chi_2\rangle\) to the trivial  \(B^2\)-equivariant shift.

\section{Markov relations}
\label{sec:markov-relations}

The  first Markov relation is equivalent to \(\HHH\) being a trace, that is we need to show that the functor  \(\HHH\) is
constant on the conjugacy classes inside \(\Br_n\). In fact one can show stronger statement. Before we state the
this stronger statement let us discuss the connection with usual flag Hilbert schemes.

\subsection{Sheaves on the flag Hilbert scheme}
\label{sec:sheaves-flag-hilbert}

The usual flag Hilbert scheme $\FHilb_{n}$
is a subvariety of $\Hilb_{1,n}^{free}$ defined by the commutativity constraint on $X,Y$:
\[ [X,Y]=0.\]
It turns out that the support of the homology of the complex $\bbS_\beta$ is contained in $\Hilb_{1,n}^L$. Hence the sheaf homology of the complex is the  sheaf
\[\calS_\beta=\calS_\beta^{odd}\oplus\calS_\beta^{even}:=\mathcal{H}^*(\FHilb_{n}^{free},\mathbb{S}_\beta)\] on $\Hilb_{1,n}$ and we immediately have the following:
\begin{theorem}\label{thm: spec seq}
There is a spectral sequence with $E_2$ term being
$$ (\textup{H}^*(\FHilb_{n},\calS_\beta\otimes\Lambda^k\calB),d)$$
$$
d: \textup{H}^k(\FHilb_{n},\calS^{odd/even}_\beta\otimes\Lambda^k\calB)\to
\textup{H}^{k-1}(\FHilb_{1,n},\calS^{even/odd}_\beta\otimes\Lambda^k\calB),$$
that converges to $\mathbb{H}^k(\beta)$.
\end{theorem}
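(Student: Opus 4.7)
The plan is to deduce the theorem from the standard hypercohomology spectral sequence applied to the two-periodic complex $\bbS_\beta\otimes\Lambda^k\calB$ on $\FHilb_n^{free}$, and then to transfer cohomology to the closed subscheme $\FHilb_n$.

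First I would check that tensoring with $\Lambda^k\calB$ is exact. Since $\calB$ is a vector bundle on $\FHilb_n^{free}$ (it descends from the trivial bundle $V_n$ along the free $B\times\CC^*$-action on $\widetilde{\FHilb}_n^{free}$), the sheaf $\Lambda^k\calB$ is locally free. Consequently the cohomology sheaves of $\bbS_\beta\otimes\Lambda^k\calB$ are exactly $\calS_\beta^{odd/even}\otimes\Lambda^k\calB$, and their support is still contained in $\FHilb_n$.

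Next I would construct the usual sheaf-to-hyper spectral sequence. Choose a Cartan--Eilenberg resolution of $\bbS_\beta\otimes\Lambda^k\calB$ by $T_{sc}$-equivariant injective (or flasque) quasi-coherent sheaves, apply global sections, and filter the resulting double complex by the resolution degree. This produces the standard spectral sequence
\[
E_2^{p,q}=H^p\bigl(\FHilb_n^{free},\,\mathcal{H}^q(\bbS_\beta\otimes\Lambda^k\calB)\bigr)\Longrightarrow\mathbb{H}^{p+q}(\bbS_\beta\otimes\Lambda^k\calB),
\]
interpreted $\ZZ/2$-periodically in $q$. Because each $\mathcal{H}^q$ is pushed forward along the closed immersion $i\colon\FHilb_n\hookrightarrow\FHilb_n^{free}$ and $i_*$ is exact on quasi-coherent sheaves, the $E_2$ term equals $H^p(\FHilb_n,\,\calS_\beta^{odd/even}\otimes\Lambda^k\calB)$, which matches the formula in the theorem, and the displayed differential $d$ is the one induced on $E_2$ by the Cartan--Eilenberg double complex. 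Convergence in the two-periodic setting follows from quasi-projectivity of $\FHilb_n^{free}$, which bounds its cohomological dimension for quasi-coherent sheaves; combined with two-periodicity of $\bbS_\beta$ this guarantees a bounded exhaustive filtration on the abutment $\mathbb{H}^k(\beta)$.

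I expect the main technical subtlety to be the interplay of the $\ZZ/2$-grading with the classical ($\ZZ$-graded) construction of Cartan--Eilenberg resolutions. One clean route is to lift $\bbS_\beta\otimes\Lambda^k\calB$ to a genuine $\ZZ$-graded periodic complex of quasi-coherent sheaves, run the classical construction, and then descend; alternatively one works directly with dg-injective resolutions inside the dg-category of $\ZZ/2$-graded quasi-coherent sheaves, building the spectral sequence intrinsically. Either route is essentially formal homological algebra; the geometric content is entirely captured by the support condition $\mathrm{supp}(\mathcal{H}(\bbS_\beta))\subset\FHilb_n$ already recorded before the theorem.
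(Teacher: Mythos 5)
Your proposal is correct and follows essentially the same route the paper indicates: the paper's one‑line argument is precisely to combine Theorem~\ref{thm:mainOR} (identifying $\mathbb{H}^k(\beta)$ as the hypercohomology of $\bbS_\beta\otimes\Lambda^k\calB$) with Proposition~\ref{prop:crit} (pinning the support of $\mathcal{H}(\bbS_\beta)$ to the critical locus $\FHilb_n$), and then invoke the standard sheaf‑to‑hypercohomology spectral sequence. You have simply spelled out the Cartan--Eilenberg construction, the local freeness of $\Lambda^k\calB$, and the $\ZZ/2$‑periodic bookkeeping that the paper leaves implicit.
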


The theorem follows almost immediately  from the main theorem \ref{thm:mainOR} and 
the proposition~\ref{prop:crit}. Moreover the sheaf \(\calS_\beta\) is actually is a conjugacy invariant:

\begin{theorem}\cite{OblomkovRozansky16}
  For any \(\alpha,\beta\in \Br_n\) we have:
  \[\calS_{\alpha\cdot\beta}\simeq \calS_{\beta\cdot\alpha}.\]
\end{theorem}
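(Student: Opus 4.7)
The plan is to interpret $\calS_{\bullet}$ as a categorical trace and exploit the standard cyclic symmetry of such traces. Since $\Phi\colon \Br_n \to \overline{\MF}_n^{st}$ is a homomorphism with respect to $\bar\star$, we have $\Phi(\alpha\beta) = \Phi(\alpha)\bar\star\Phi(\beta)$ and $\Phi(\beta\alpha) = \Phi(\beta)\bar\star\Phi(\alpha)$. Applying $\mathbb{L}(-) = j_e^*(-)^B$ and passing to sheaf-level homology, it therefore suffices to establish the cyclic property
\[
\mathbb{L}(\calF\bar\star\calG) \simeq \mathbb{L}(\calG\bar\star\calF)
\]
as objects in $D^{per}_{T_{sc}}(\FHilb_{n}^{free})$, for any two objects $\calF, \calG \in \overline{\MF}_n^{st}$. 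Once this is shown on the level of the complexes $\mathbb{S}_{\bullet}$, taking cohomology sheaves yields the claimed isomorphism $\calS_{\alpha\beta} \simeq \calS_{\beta\alpha}$.

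To prove the cyclic identity, I would realize both sides as pushforwards from a common "cyclic convolution" space. Recall that $\calF \bar\star \calG = \pi_{13*}\bigl(\CE(\pi_{12}^*\calF \otimes \pi_{23}^*\calG)\bigr)^{T^{(2)}}$, where the ambient space carries coordinates $(X, g_{12}, Y_2, g_{23}, Y_3)$ (after suitable straightening) with all three $X$-coordinates identified via $\Ad$. Composing with $\mathbb{L}$ imposes the closure condition $g_{13} = g_{12}g_{23}^{-1} = e$, so one works on the locus $g_{12} = g_{23}$; combined with base change along the diagram relating $j_e$ to the convolution projections, this allows us to write $\mathbb{L}(\calF\bar\star\calG)$ as a pushforward from a cyclic space $\calY$ of triples $(X, g, Y_2, Y_3, v)$ with potential and equivariance inherited from $\bar\calX$. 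On $\calY$ the involution $\tau\colon (X, g, Y_2, Y_3, v) \mapsto (\Ad_g X, g^{-1}, Y_3, Y_2, v)$ (adjusted to preserve the stability condition \eqref{eq:stab} and the potential $\Wr$) swaps the pullbacks of $\calF$ and $\calG$ and descends to the desired isomorphism after taking $B_\Delta$-invariants.

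The main obstacle will be reconciling the equivariance structures under the cyclic swap: the two factors of $B^2$ on the left and right sides of $\bar\star$ play asymmetric roles, and collapsing them to the single diagonal $B_\Delta$ via $\mathbb{L}$ must be compatible with the involution $\tau$. Concretely, one must check that the weakly $U^2$-equivariant corrections $\partial$ in $\calF$ and $\calG$, together with the Chevalley-Eilenberg resolution on which they act, transform correctly, and that the auxiliary torus $T^{(2)}$ invariants in the convolution formula are preserved. The shrinking lemma~\ref{lem:shrink} should allow us to restrict to a convenient open subset where the geometry of $\tau$ is transparent, and the base-change formalism together with the compatibility of $j_e^*$ with pushforwards (as developed in Section~\ref{sec:functoriality} and generalized to the equivariant setting in Section~\ref{sec:equiv-matr-fact}) should handle the rest. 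This is in line with the general philosophy that $\mathbb{L}$ plays the role of Hochschild homology for the monoidal category $(\overline{\MF}_n^{st}, \bar\star)$, for which cyclic symmetry is automatic; the work lies purely in verifying that the matrix-factorization incarnation of this formalism behaves as expected.
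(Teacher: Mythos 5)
Your proposal is essentially the paper's own argument. The paper works on a cyclic convolution space $\cx_k\subset \gl_n\times(\GL_n\times\frn_n)^k$ cut out by the constraint that the cyclic product of the group elements is $1$, observes that $\bbS_{\alpha\beta}$ and $\bbS_{\beta\alpha}$ are pushforwards of the same curved complex along two projections $pr_1,pr_2$ that are intertwined by conjugation on the critical locus, and invokes the shrinking lemma \ref{lem:shrink} to compare the pushforwards; your involution $\tau$ is exactly this cyclic rotation in the two-factor case ($g_{12}=g_{23}$), and your observation that the residual equivariance and stability issues are the delicate points matches where the paper's proof defers the technical checks.
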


The  argument could be found in the cited paper, here we illustrate the idea by showing that
\begin{equation}\label{eq:m1}
  \calS_{\sigma_i\sigma_j\sigma_k}\simeq \calS_{\sigma_j\sigma_k\sigma_i}.\end{equation}
Indeed, let us introduce the space \(\cx_3\subset \gl_n\times (\GL_n\times \frn_n)^3\) defined by the
constraint requiring the cyclic product of the group elements to be one.
There is  a natural \(B^3\)-action
and \(B^3\)-equivariant projections :
\[pr_i:\cx_3\to \gl_n\times \frn_n,\quad pr_i(X,g_1,Y_1,g_2,Y_2,g_3,Y_3)=(X,Y_i).\]
Respectively,  we also have projections \(\pi^\circ_{12},\pi_{23}^\circ,\pi_{31}^\circ:\cx_3\to\cx^\circ\) and
\[\bbS_{\sigma_i\sigma_j\sigma_k}=pr_{1*}(\calC),\quad \bbS_{\sigma_j\sigma_k\sigma_i}=pr_{2*}(\calC),\quad
  \calC=\CE_{\frn^3}(\pi_{12}^{\circ *}(\calC^{(i)}_+)\otimes \pi_{23}^{\circ*}(\calC^{(j)}_+)\otimes \pi_{31}^{\circ*}(\calC^{(k)}_+))\]

On the critical locus of \(\pi^{\circ*}_{i,i+1}(W^\circ)\) we have \(Y_i=\Ad_{g_i}Y_{i+1}\) hence on the critical locus the conjugation by \(g_1\) intertwines the projections \(pr_1\) and \(pr_2\) the isomorphism \eqref{eq:m1} follows.

In the argument above we ignore the stability conditions but can check that the shrinking lemma~\ref{lem:shrink} implies
that the argument above works even after we impose the stability conditions.

\subsection{Second Markov move}
\label{sec:second-markov-move}

The  second Markov relation is more subtle and a proof of this relation is arguably the most valuable result of \cite{OblomkovRozansky16}.
To convey the main idea of the proof we explain why it holds for the braids on two strands. In this case we need to compare
the homology of the closure of \(\sigma_1^{\pm 1}\) with the homology of unknot, so let us first do the most trivial case of
the braids on one strands since \(\mathrm{L}(\mathbf{1}_1)\), \(\mathbf{1}_1\in \Br_1\) is manifestly the unknot.

Indeed, for \(n=1\) we have \(\bar{\cx}_1=\cc\times\cc^*\times 0\) and \(j_e\) embeds \(\widetilde{\FHilb}_1^{free}=\cc\times 1\times 0\)
inside \(\bar{\cx}_1\). The group \(B_1=\cc^*\) acts trivially on \(\widetilde{\FHilb}_1^{free}\) and thus \( \FHilb_1=\cc\) and
\(\mathbb{S}_e=j^*_e(\calO_{\bar{\cx}_1})=\calO_{\cc}\) and \(\calB_1\) is the trivial bundle. We conclude then:
\[\dim_{q,t}H^0(\mathbf{1}_1)=\dim_{q,t}H^1(\mathbf{1}_1)=\frac{1}{1-q^2}. \]

Now let us explore the geometry of 
the free Hilbert scheme \(\FHilb_2^{free}\). Let us fix coordinates on the space \(\widetilde{\FHilb}_2^{free}\subset \frb\times \frn\times V\):
\[X=\begin{bmatrix}x_{11}& x_{12}\\ 0& x_{22}\end{bmatrix},\quad Y=\begin{bmatrix} 0&y \\0&0,
  \end{bmatrix},
  \quad v=\begin{bmatrix} v_1\\v_2
    \end{bmatrix}.
\]

Since we have the stability condition \(\cc\langle X,Y\rangle v=\cc^2\) and both \(X,Y\) are upper-triangular, we must have
\(v_2\ne 0\). Thus after conjugating by the appropriate upper-triangular matrix we could assume that \(v_2=1,v_1=0\), let us denote
this vector by \(v^0\). It is also elementary to see that
\[\cc\langle X,Y\rangle v^0=\cc^2 \mbox{ if and only if } x_{12}y\ne 0.\]
Also the stabilizer of \(v^0\) is \(\cc^*\) that scales \(x_{12},y\) and preserves \(x_{11},x_{22}\). Hence we have shown:
\[\FHilb_2^{free}=\mathbb{P}^1\times \CC^2,\]
the projection \(p\) on \(\CC^2\) is given by the coordinates \(x_{11},x_{22}\).

Let us contrast the geometry of \(\FHilb^{free}_2\) with the geometry of \(\FHilb_2\). The  discussion
in this paragraph is not used in the proof below and is just an illustration of
difficulties of the geometry of the flag Hilbert scheme. The  condition \([X,Y]=0\) is equivalent to
the constraint:
\[y(x_{11}-x_{22})=0.\]
Hence the fibers of the projection \(p:\FHilb_2\to \CC^2\)  are points outside of the diagonal \(x_{11}=x_{22}\) and the fibers
are projective lines \(\mathbb{P}^1\) over the diagonal.

Next let us recall that the matrix factorization for the simple positive crossing  is \(\calC_+=|\tilde{x},yg_{21}|\).
Since \(\tilde{x}|_{g=1}=(x_{11}-x_{22})\), the pull-back \(j_{e}(\calC_+)\) is the Koszul complex that is homotopy equivalent to the
structure sheaf of \(\mathbb{P}^1\times \CC\). Finally, the tautological vector bundle is a sum of the line bundles
\(\calB^\vee=\calO\oplus \calO(-1)\), hence:
\[H^0(\sigma_1)=H^*(\calO_{\mathbb{P}^1\times \cc})=\CC[x_{11}],\quad
  H^1(\sigma_1)=H^*(\calB^\vee)=H^*(\calO_{\mathbb{P}^1\times \cc}\oplus \calO_{\mathbb{P}^1\times \cc}(-1) )=\CC[x_{11}],\]
\[H^2(\sigma)=H^*(\det(\calB))=H^*(\calO_{\mathbb{P}^1\times \cc}(-1))=0.\]
By our construction the matrix factorization for the negative crossing
is differs by a line bundle twist  from the one for positive crossing. In particular, we have
\(j_e(\calC_-)=\calO_{\mathbb{P}^1\times \cc}(-1)\) and can compute the homology:
\[H^0(\sigma_1^{-1})=H^*(\calO_{\mathbb{P}^1\times \cc}(-1))=0,\quad
  H^2(\sigma^{-1})=H^*(\det(\calB)\otimes \calO(-1))=H^*(\calO_{\mathbb{P}^1\times \cc}(-2))=\CC[x_{11}], \]
  \[H^1(\sigma_1^{-1})=H^*(\calB^\vee\otimes \calO(-1))=H^*(\calO_{\mathbb{P}^1\times \cc}(-1)\oplus \calO_{\mathbb{P}^1\times \cc}(-2) )=\CC[x_{11}],
   \]

Thus we have shown \(H^k(\sigma)=H^k(\mathbf{1}_1)\) and \(H^{k+1}(\sigma^{-1})=H^k(\mathbf{1}_1)\) as we expected.


Respectively, we can use nested nature of the scheme \(\FHilb_n\) to define the intermediate map:
\[\pi: \FHilb^{free}_n\to \cc\times \FHilb^{free}_{n-1},\]
where the first component of the map \(\pi\) is \(x_{11}\) and the second component is just forgetting of the first
rows and rows of the matrices \(X,Y\) and the first component of the vector \(v\). Let us also fix notation for the line bundles on
 \(\FHilb_n^{free}\): we denote by \(\calO_k(-1)\) the line bundle induced from the twisted trivial bundle \(\calO\otimes\chi_k\).
It is quite elementary to
show

\begin{prop}
  The fibers of the map \(\pi\) are projective spaces \(\mathbb{P}^{n-1}\) and
  \begin{enumerate}
  \item \(\calB_n/\pi^*(\calB_{n-1})=\calO_n(-1).\)
  \item \(\calO_n(-1)|_{\pi^{-1}(z)}=\calO_{\mathbb{P}^{n-1}}(-1).\)  
  \end{enumerate}
\end{prop}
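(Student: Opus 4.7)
The plan is to work out the fibers of $\pi$ via an explicit local model and extract both the fiber shape and the restriction of the line bundle from the same computation.

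Fix a base point $(x_{11},[X_0',Y_0',v_0'])\in\cc\times\FHilb^{free}_{n-1}$ and pick a representative $(X_0',Y_0',v_0')$ of its $B_{n-1}$-orbit. Since cyclic triples have trivial $B_{n-1}$-stabilizer, the preimage of this lift in $\wt{\FHilb}^{free}_n$ is parametrized by extra data $(r,s,v_1)\in\cc^{n-1}\oplus\cc^{n-1}\oplus\cc=\cc^{2n-1}$ (the first row of $X$ and of $Y$ past the corner entry, together with $v_1$), subject to cyclicity of the full triple. The residual subgroup of $B_n$ preserving the chosen lift and $x_{11}$ is $H=\cc^*\ltimes\cc^{n-1}$, realized by matrices $b=\bigl(\begin{smallmatrix}\lambda & u^T\\0 & I\end{smallmatrix}\bigr)$; a short conjugation calculation shows that $H$ acts linearly on $\cc^{2n-1}$ by
\[(r,s,v_1)\mapsto\bigl(\lambda r+A_ru,\ \lambda s+A_su,\ \lambda v_1+A_vu\bigr),\qquad A_r=(X_0')^T-x_{11}I,\ A_s=(Y_0')^T,\ A_v=(v_0')^T,\]
which assemble into a single linear map $A\colon\cc^{n-1}\to\cc^{2n-1}$.

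The two key claims are: (a) $A$ is injective, and (b) the non-cyclic locus in $\cc^{2n-1}$ is exactly $\Image A$. For (a), $u\in\ker A$ gives $u^T X_0'=x_{11}u^T$, $u^T Y_0'=0$, and $u^T v_0'=0$; an induction on word length yields $u^T w(X_0',Y_0')v_0'=0$ for every word $w$, and cyclicity of $(X_0',Y_0',v_0')$ then forces $u=0$. For (b), the easy direction is that if $(r,s,v_1)=Au$ then gauging by the matching $b$ puts $(X,Y,v)$ into block-diagonal form with $\{0\}\oplus V_{n-1}\subset V_n$ an invariant subspace containing $v$, so the triple is not cyclic. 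For the converse, non-cyclicity forces $U=\cc\langle X,Y\rangle v$ to have dimension $n-1$ (it still surjects onto $V_{n-1}$) and to be a complement of $\cc e_1$; any such complement is the graph of a unique functional $u\in V_{n-1}^*$, and imposing $X$- and $Y$-stability of the graph together with $v\in U$ produces precisely the equations $(r,s,v_1)=Au$.

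With (a) and (b) in hand, the fiber becomes $(\cc^{2n-1}\setminus\Image A)/(\cc^*\ltimes\cc^{n-1})$; quotienting by the free $\cc^{n-1}$-action first gives $(\cc^{2n-1}/\Image A)\setminus\{0\}\cong\cc^n\setminus\{0\}$, and the residual $\cc^*$-quotient is $\PP^{n-1}$. The scaling $\cc^*\subset H$ that produces this $\PP^{n-1}$ is by construction the same character that defines $\calO_n(-1)$, which gives (2). For (1), the $B_n$-equivariant short exact sequence $0\to\cc e_1\to V_n\to V_n/\cc e_1\to 0$ descends to $0\to\calL\to\calB_n^\vee\to\pi^*(\calB_{n-1}^\vee)\to 0$ on $\FHilb^{free}_n$ with $\calL$ a line bundle; dualizing identifies $\calB_n/\pi^*(\calB_{n-1})$ with $\calL^\vee$, which is $\calO_n(-1)$ after matching characters.

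The main obstacle is the converse direction of (b), that non-cyclicity implies $(r,s,v_1)\in\Image A$: it requires identifying the unique $X$- and $Y$-invariant complement of $\cc e_1$ in $V_n$ and reading off the functional $u\in V_{n-1}^*$ that produces the gauge equivalence with the block-diagonal normal form.
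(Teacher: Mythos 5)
Your argument is correct, and since the paper declares this proposition ``quite elementary'' and records no proof, the write-up supplies a genuine argument. The reduction of the fiber over $(x_{11},[X_0',Y_0',v_0'])$ to $(\cc^{2n-1}\setminus\Image A)/H$ with $H=\cc^*\ltimes\cc^{n-1}$ is set up correctly (you are using that $H=\ker(B_n\to B_{n-1})$ together with triviality of the $B_{n-1}$-stabilizer of a cyclic triple), the conjugation formulas giving $A_r,A_s,A_v$ check out, injectivity of $A$ follows cleanly from cyclicity, and the converse of (b) — which you correctly flag as the real content — is handled properly: $U=\cc\langle X,Y\rangle v$ has dimension exactly $n-1$, is a complement of $\cc e_1$, is the graph of a unique $u\in V_{n-1}^*$, and invariance plus $v\in U$ give $(r,s,v_1)=Au$. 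Quotienting by the free translation action first and then the residual $\cc^*$ gives $\PP^{n-1}$ as claimed.

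The one spot where you should not wave your hands is the character identification at the end. The residual scaling torus is $\{\mathrm{diag}(\lambda,1,\dots,1)\}$, which acts on $V_n$ by scaling $e_1$ and fixing $e_2,\dots,e_n$; as a character of $T\subset B_n$ this is $\chi_1$, not $\chi_n$. Likewise $\calL$ in your short exact sequence is the descent of $\cc e_1$, i.e.\ of $\chi_1$, so with the paper's stated convention ($\calO_k(-1)$ is the descent of $\chi_k$) your computation produces $\calB_n/\pi^*\calB_{n-1}=\calL^\vee$ with $\calL$ the descent of $\chi_1$, not $\chi_n$; note also that $\chi_n$ restricts trivially to this residual $\cc^*$, so ``descent of $\chi_n$'' would restrict to $\calO_{\PP^{n-1}}$ on fibers, which is incompatible with part (2) and with the push-forward corollary the paper needs. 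This is almost certainly an indexing slip or sign-convention issue in the paper's statement rather than a flaw in your geometry, but ``by construction the same character that defines $\calO_n(-1)$'' and ``after matching characters'' are not verifications; you should say explicitly that the character your construction produces is the one scaling $e_1$ and point out the apparent mismatch with the paper's $\calO_n(-1)$ notation.
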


We can combine the last proposition with the observation that the total homology \(H^*(\mathbb{P}^{n-1},\calO(-l))\) vanish
if \(l\in (1,n-1)\) and is one-dimensional for \(l=0,n\):

\begin{cor}
  For any \(n\) we have:
  \begin{itemize}
  \item \(\pi_*(\Lambda^k\calB_n)=\Lambda^k\calB_{n-1}\)
  \item \(\pi_*(\calO_n(m)\ot \Lambda^k\calB_n)=0\) if \(m\in [-n+2,-1]\).
    \item \(\pi_*(\calO_n(-n+1)\ot \Lambda^k\calB_n)=\Lambda^{k-1}\calB_{n-1}[n]\)
  \end{itemize}
\end{cor}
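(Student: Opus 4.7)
The plan is to exploit the rank-one quotient structure from the proposition to build a two-step filtration on $\Lambda^k\calB_n$ and then pushforward piece by piece. Concretely, part (1) of the proposition says $\calB_n/\pi^*\calB_{n-1} = \calO_n(-1)$, which is equivalent to a short exact sequence of vector bundles
$$0 \to \pi^*\calB_{n-1} \to \calB_n \to \calO_n(-1) \to 0$$
on $\FHilb_n^{free}$. Taking the $k$-th exterior power produces a standard two-step filtration on $\Lambda^k\calB_n$ with associated graded pieces $\Lambda^k \pi^*\calB_{n-1}$ and $\pi^*\Lambda^{k-1}\calB_{n-1}\otimes \calO_n(-1)$.

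After twisting by $\calO_n(m)$ and applying $R\pi_*$, the projection formula reduces everything to computing $R\pi_*\calO_n(l)$ for $l\in\{m,m-1\}$. Because the fibers of $\pi$ are $\mathbb{P}^{n-1}$ and $\calO_n(-1)$ restricts fiberwise to $\calO_{\mathbb{P}^{n-1}}(-1)$ by part (2), cohomology and base change identifies these derived pushforwards with $H^*(\mathbb{P}^{n-1},\calO(l))\otimes(\text{line bundle from the base})$. I would then treat the three cases in parallel:
\begin{itemize}
\item For $m=0$: the top graded piece contributes $\Lambda^k\calB_{n-1}\otimes R\pi_*\calO = \Lambda^k\calB_{n-1}$, while the bottom piece vanishes since $R\pi_*\calO_n(-1)=0$.
\item For $m\in[-n+2,-1]$: both $m$ and $m-1$ lie in the vanishing range $\{-(n-1),\dots,-1\}$, so both graded pieces, hence the total, push forward to zero.
\item For $m=-n+1$: the top piece vanishes since $m=-(n-1)$ is in the vanishing range, while the bottom piece involves $\calO_n(-n)$, whose only nonzero fiberwise cohomology is $H^{n-1}(\mathbb{P}^{n-1},\calO(-n))\simeq\CC$, producing the asserted shifted copy of $\Lambda^{k-1}\calB_{n-1}$.
\end{itemize}

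I expect the main obstacle to be pinning down the precise cohomological shift and any residual line-bundle twist in the third case. In principle $R^{n-1}\pi_*\calO_n(-n)$ equals the inverse of the relative dualizing bundle $\omega_{\pi}$ restricted along the zero section, and one must verify that for the projective bundle structure $\FHilb_n^{free}\to\CC\times\FHilb_{n-1}^{free}$ this dualizing sheaf differs from $\calO_n(-n)$ only by the pullback of a trivial line bundle, so that no extra twist obstructs the clean identification with $\Lambda^{k-1}\calB_{n-1}$. Comparing with the explicit model $\FHilb_2^{free}=\mathbb{P}^1\times\CC^2$ from the preceding Markov-move discussion gives a sanity check for $n=2$, and the general case follows by recognizing $\FHilb_n^{free}\to\CC\times\FHilb_{n-1}^{free}$ as a projective bundle built from $\calB_{n-1}$ together with the free coordinate $x_{11}$.
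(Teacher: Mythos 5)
Your argument is the paper's own proof made explicit: the sentence preceding the corollary says exactly to combine the proposition with the vanishing of $H^*(\mathbb{P}^{n-1},\calO(-l))$ for $1\le l\le n-1$ and its one-dimensionality for $l=0,n$, which unpacked means precisely the short exact sequence $0\to\pi^*\calB_{n-1}\to\calB_n\to\calO_n(-1)\to0$, the induced two-step filtration on $\Lambda^k\calB_n$, and the projection formula applied fiberwise. You correctly identified the only delicate point, the possible line-bundle twist from $R^{n-1}\pi_*\calO_n(-n)$ in the third bullet, but otherwise this matches the intended argument.
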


The geometric version of the Markov move is the following
\begin{theorem} For any \(\beta\in \Br_{n-1}\) we have
  \[\mathbb{H}^k(\beta\cdot\sigma_1)=\mathbb{H}^k(\beta),\quad \mathbb{H}^k(\beta\cdot\sigma_1)=\mathbb{H}^{k-1}(\beta).\]
\end{theorem}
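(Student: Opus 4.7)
The approach is to imitate the $n=2$ computation performed above, using the $\mathbb{P}^{n-1}$-bundle projection $\pi\colon \FHilb_n^{free}\to \CC\times \FHilb_{n-1}^{free}$ together with the preceding corollary that computes $\pi_*$ on products of $\calO_n(m)$ and $\Lambda^k\calB_n$. The key inputs from the $n=2$ case were (i) that under $j_e$ the matrix factorization $\calC_+$ collapses to the structure sheaf of the fiber because $\tilde x|_{g=e}\propto (x_{11}-x_{22})$ and the differential involving $y_{12}g_{21}|_{g=e}$ becomes trivial, and (ii) that the character shifts distinguishing $\calC_-$ from $\calC_+$ turn into the line bundle $\calO_n(-1)$ after descent. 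Both features are intrinsic to the generator $\sigma_1$ and should persist for arbitrary $n$.

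\emph{Step 1: geometric identification.} Write $\Phi(\beta\cdot\sigma_1^{\pm 1})=\Phi(\beta)\,\bar\star\,\calC_\pm^{(1)}$ by the braid group homomorphism. The factor $\calC_\pm^{(1)}=\overline{\mathrm{Ind}}_{1,2}(\calC_\pm)$ depends only on the first two rows/columns of $X$, $g$, $Y$. I would argue, using the shrinking lemma~\ref{lem:shrink} on the convolution space and the fact that $j_e$ imposes $g=e$, that the convolution decouples upon restriction: the contribution of $\calC_\pm^{(1)}$ becomes supported on the fiber direction of $\pi$ and descends to $\calO_n$ for $\sigma_1$ and to $\calO_n(-1)\otimes \det\calB_n$ (up to a grading/parity shift) for $\sigma_1^{-1}$, exactly as in the $n=2$ base case. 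The outcome I aim to establish is an isomorphism in $D^{per}_{T_{sc}}(\FHilb_n^{free})$ of the form
\[
\mathbb{S}_{\beta\cdot\sigma_1}\simeq \pi^*\mathbb{S}'_\beta,\qquad
\mathbb{S}_{\beta\cdot\sigma_1^{-1}}\simeq \pi^*\mathbb{S}'_\beta\otimes \calO_n(-n+1)\otimes \det\calB_n,
\]
where $\mathbb{S}'_\beta$ is the complex on $\CC\times \FHilb_{n-1}^{free}$ produced by $\beta$ viewed as acting on the last $n-1$ strands (the $\CC$ factor records $x_{11}$, which $\beta$ leaves untouched).

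\emph{Step 2: projection formula.} Tensoring with $\Lambda^k\calB_n$ and applying $\pi_*$, the projection formula and the corollary give
\[
\pi_*\bigl(\mathbb{S}_{\beta\cdot\sigma_1}\otimes \Lambda^k\calB_n\bigr)=\mathbb{S}'_\beta\otimes \pi_*\Lambda^k\calB_n=\mathbb{S}'_\beta\otimes \Lambda^k\calB_{n-1},
\]
and, using the third bullet of the corollary together with the determinant twist,
\[
\pi_*\bigl(\mathbb{S}_{\beta\cdot\sigma_1^{-1}}\otimes \Lambda^k\calB_n\bigr)\simeq \mathbb{S}'_\beta\otimes \Lambda^{k-1}\calB_{n-1}[n].
\]
Taking hyper-cohomology and using that the two-periodic shift $[n]$ is harmless up to the $\zz_2$-degree change translates directly into the asserted equalities $\bbH^k(\beta\cdot\sigma_1)=\bbH^k(\beta)$ and $\bbH^k(\beta\cdot\sigma_1^{-1})=\bbH^{k-1}(\beta)$.

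\emph{Main obstacle.} The delicate step is Step 1. Since $\sigma_1$ and $\beta\in\Br_{n-1}$ need not commute (they interact through $\sigma_2$ whenever $\beta$ involves it), one cannot simply factor the convolution $\Phi(\beta)\,\bar\star\,\calC_\pm^{(1)}$ as an external tensor product. What makes the argument go through is that after applying $j_e$ the auxiliary group variable is frozen to the identity, so on the critical locus the potential degenerates in such a way that the differentials that entangle $\beta$ with the first-strand crossing trivialize. Making this rigorous requires a careful base-change diagram on the convolution space $\cx_{cnv}^\circ$, of the same flavor as the one used to prove the conjugation invariance $\calS_{\alpha\beta}\simeq \calS_{\beta\alpha}$ earlier, combined with the shrinking lemma to restrict to a neighborhood of the critical locus where the decoupling becomes visible.
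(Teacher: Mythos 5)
Your overall strategy is the same as the paper's: push forward along the $\mathbb{P}^{n-1}$-bundle $\pi\colon\FHilb_n^{free}\to\CC\times\FHilb_{n-1}^{free}$, use the corollary about $\pi_*\bigl(\calO_n(m)\otimes\Lambda^k\calB_n\bigr)$, and the projection formula. But the crucial technical input you supply in Step~1 is different from, and almost certainly stronger than, what is actually true.

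You claim an isomorphism $\mathbb{S}_{\beta\cdot\sigma_1}\simeq\pi^*\mathbb{S}'_\beta$ (and its twisted analogue for $\sigma_1^{-1}$) in $D^{per}_{T_{sc}}(\FHilb_n^{free})$. What the paper establishes instead is that the matrix factorization $\bar{\calC}_{\beta\cdot\sigma_1^{\pm1}}\in\MF(\bar{\cx}_n,\Wr)$ is a twisted complex of the form \eqref{dia:big}: the terms are $\calC'\otimes\Lambda^i V\otimes(\text{line bundle twist})$ with $\calC'=p_1^*(\bar{\calC}_\beta)$ and $V=\CC^{n-2}$, with solid arrows going left-to-right and dotted arrows going right-to-left; only the dotted arrows are the Koszul differentials for $(g_{13},\dots,g_{1n})$, so only they vanish under $j_e^*$ (which sets $g=e$). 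The solid (forward) arrows do \emph{not} vanish under $j_e^*$, so $j_e^*(\bar{\calC}_{\beta\cdot\sigma_1})$ is a genuinely nontrivial filtered complex, not a pullback along $\pi$. The cancellation you want happens only \emph{after} applying $\pi_*$: the middle terms of \eqref{dia:big} are twisted by $\calO_n(m)$ with $m$ in the range $[-n+2,-1]$ and hence are killed, so only the extreme term survives; and since all remaining arrows go in one direction the contraction of the acyclic terms produces no new homotopy-correction arrows. This last point --- that no correction arrows appear --- is exactly what lets you equate $\pi_*(j_e^*(\bar{\calC}_{\beta\cdot\sigma_1^{\pm1}})\otimes\Lambda^k\calB_n)$ with $j_e^*(\bar{\calC}_\beta)\otimes\Lambda^{k}\calB_{n-1}$ (or $\Lambda^{k-1}$), and it is not automatic: it needs the one-sidedness of the surviving differentials, which your Step~1 does not supply.

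Concretely: if you tried to prove your claimed isomorphism directly you would run into the forward differentials (which involve the entries $x_{1j}$, $y_{1j}$ coupling the first strand to the rest, and hence are generically nonzero on $\FHilb_n^{free}$). The shrinking lemma and base change are the right tools, but they give you the decomposition \eqref{dia:big}, not the shortcut $\mathbb{S}_{\beta\cdot\sigma_1}\simeq\pi^*\mathbb{S}'_\beta$. The fix is to replace Step~1 with the assertion of the twisted-complex structure \eqref{dia:big} (which requires the careful analysis in \cite{OblomkovRozansky16}), and then in Step~2 run $\pi_*$ term by term through the corollary, noting separately that the one-way arrows of the surviving complex prevent the appearance of higher homotopies. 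Your Step~2 computation is otherwise correct and is identical to the paper's. (Also, in your formula for $\sigma_1^{-1}$ the extra factor $\det\calB_n$ is spurious: the third bullet of the corollary already produces the shift $\Lambda^k\rightsquigarrow\Lambda^{k-1}$ from the single twist $\calO_n(-n+1)$.)
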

\begin{proof}[Sketch of a proof]
  The main technical component of the proof is the careful analysis of the matrix factorizations \(\bar{\calC}_{\beta\cdot \sigma^{\pm 1}}\MF(\bar{\cx}_n,\Wr)\).
   It is shown in \cite{OblomkovRozansky16} that this curved complex \(\bar{\calC}_{\beta\cdot\sigma_1^{\epsilon}}\) has form:
   \begin{equation}\label{dia:big}
     \begin{tikzcd}
              \calC'\ar[r,bend right]\ar[rrr,bend right]\ar[rrrrr,bend right]&
       \calC'\ot V\ar[l,dotted]\ar[r,bend right]\ar[rrr,bend right]&
       \calC'\ot\Lambda^2V\ar[l,dotted]\ar[r,bend right]\ar[rrr,bend right]&
       \calC'\ot\Lambda^3V\ar[l,dotted]\ar[r,bend right]&
      \calC'\ot\Lambda^2V\ar[l,dotted]\ar[r,bend right]&\cdots\ar[l,dotted]
  \end{tikzcd}
\end{equation}
  where \(\calC'=p_1^*(\bar{\calC}_{\beta})\), \(V=\CC^{n-2}\),
 the dotted arrows are the differentials of the Koszul complex for the ideal \(I=(g_{13},\dots,g_{1n})\) where \(g_{ij}\) are the coordinates on the
  group inside the product \(\bar{\cx}_n=\frb_n\ti \GL_n\ti \frn_n\). Thus after the pull-back \(j_e^*\) the dotted arrows of the curved complex
  vanish and we only left with the arrows going from the left to right.

  Now we would like to compute \(\pi_*(j^*_e(\bar{\calC}_{\beta\cdot\sigma_1^{\pm 1}})\ot \Lambda^k\calB_n)\) and  here we can apply the previous corollary.
  Thus if \(\epsilon=1\) then only the left extreme term of \(j_e^*\) of the complex \eqref{dia:big} survive the push-forward \(\pi_*\). Since
  the non-trivial arrows of \(j_e^*\) of \eqref{dia:big} all  are the solid arrows which are  going the left to the right, the contraction of the \(\pi_*\)-acyclic terms
  do not lead to appearance of new correction arrows thus conclude that
  \[\pi_*(j^*_e(\bar{\calC}_{\beta\cdot\sigma_1})\ot \Lambda^k\calB_n)=j_e^*(\bar{\calC}_\beta\ot \Lambda^k\calB_{n-1}).\]

  If \(\epsilon=-1\) then only the right extreme term of \(j_e^*\) of the complex \eqref{dia:big} survive the push-forward \(\pi_*\). Hence the
  similar argument as before implies:
    \[\pi_*(j^*_e(\bar{\calC}_{\beta\cdot\sigma_1^{-1}})\ot \Lambda^k\calB_n)=j_e^*(\bar{\calC}_\beta\ot \Lambda^{k-1}\calB_{n-1}).\]
\end{proof}
\section{Chern functor and localization}
\label{sec:chern-funct-local}
The   theorem~\ref{thm: spec seq} provides a theoretical method for constructing a sheaf on the flag Hilbert scheme that
contains all the information about the knot homology of the closure of the braid \(\rmL(\beta)\).
However, it is hard to use this method for actually computing knot homology.

The first complication comes from the fact that the space \(\FHilb_{n}\) is very singular and working with this
space requires extra level of care and technicalities \cite{GorskyNegutRasmussen16}. We will explain how
one can circumvent this complication with the Chern functor from the next subsection.

The second complication
comes from possible non-vanishing of the differential \(d\) in the theorem, one would like to
avoid the spectral sequence that do not degenerate at the second step. The differential vanishes automatically
if for example \(\calS_\beta^{odd}\) vanishes, this kind of property is probably related
to the {\it parity } property in \cite{EliasHogancamp17}
for Soergel bimodel model of the knot homology. Again the Chern functor helps with finding braids that have the
parity property, as we explain in the end of the section.

\subsection{Chern functor}
\label{sec:chern-functor}

In the paper \cite{OblomkovRozansky18a} we construct a pair of functors which we call a Chern functor and a co-Chern functor:
\begin{equation}
\label{eq:mnchcchf}
  \begin{tikzcd}
    \MF^{\st}_n\arrow[rr,bend left,"\CH^{st}_{loc}"]&& D^{per}_{T_{sc}}(\Hilb_n)\arrow[ll,bend left,"\HC^{st}_{loc}"]
  \end{tikzcd},
\end{equation}
where \(\Hilb_n\) is the Hilbert scheme of \(n\) points on \(\CC^2\), while \(D^{per}_{T_{sc}}(\Hilb_n)\) is the derived category of
two-periodic \(T_{sc}\)-equivariant complexes on the Hilbert scheme. In the same paper we prove
\begin{theorem}\cite{OblomkovRozansky18a}
For every \(n\)  we have
\begin{itemize}
\item The functors \(\CH^{\rst}_{\rloc}\) and \(\HC^{\rst}_{\rloc}\) are adjoint.
\item The functor \(\HC^{\rst}_{\rloc}\) is monoidal.
\item The image of \(\HC^{\rst}_{\rloc}\) commutes with the elements \(\Phi(\beta)\), \(\beta\in \Br_n\).
\end{itemize}

\end{theorem}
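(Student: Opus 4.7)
The plan is to realize both $\CH^{\rst}_{\rloc}$ and $\HC^{\rst}_{\rloc}$ as integral transforms through a common correspondence, so that all three claims reduce to the pull--push calculus of Section~\ref{sec:functoriality} together with one essentially geometric input. The natural correspondence is a stable version of the free Hilbert scheme $\FHilb_n^{free}$, which embeds $B$-equivariantly into $\bar{\cx}^{\rst}$ via $j_e$ (exactly as used to construct $\bbS_\beta$) and which maps to $\Hilb_n(\CC^2)$ by the natural $B$-quotient followed by identification of a cyclic pair with its joint spectrum. After composing with the Kn\"orrer equivalence $\Psi$ of Proposition~\ref{prop:Kn} to identify $\MF^{\st}_n$ with $\overline{\MF}_n^{\rst}$, I would model $\CH^{\rst}_{\rloc}(\calF)$ as $\pi_*\bigl(\CE_\frb(j_e^*\Psi(\calF))\bigr)$ and model $\HC^{\rst}_{\rloc}$ as the pull--push in the opposite direction twisted by a specific Koszul kernel $\mathcal{K}$ accounting for the duality between $j_e^*$ and $(j_e)_*$ and for the equivariance data.

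With this realization, the adjunction in the first bullet is a formal consequence of the adjunction $(f^*,f_*)$ from Section~\ref{sec:functoriality} and the $\CE_\frb$-model for the $B$-equivariant structure: the kernel $\mathcal{K}$ is chosen precisely so that the Serre-type pairing it implements compensates for the singularity of $\FHilb_n$ and lands us back in matrix factorizations. The main technical point is to check compatibility of the adjunction unit/counit with the curved-complex differentials and the weak $U^2$-equivariance, which reduces to a base-change diagram that is essentially built into the construction of Theorem~\ref{thm:push}. For monoidality of $\HC^{\rst}_{\rloc}$ (second bullet), I would run a diagram chase on the triple-convolution space $\cx_{\mathrm{con}}$: pull back $\mathcal{E}_1\otimes\mathcal{E}_2$ via the correspondence, apply the projection formula, and observe that the self-composition of $\mathcal{K}$ across the middle leg collapses to $\mathcal{K}$ itself because the intermediate fiber is, up to Kn\"orrer reduction, the diagonal. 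This produces the canonical isomorphism $\HC^{\rst}_{\rloc}(\mathcal{E}_1\otimes\mathcal{E}_2)\simeq \HC^{\rst}_{\rloc}(\mathcal{E}_1)\,\bar{\star}\,\HC^{\rst}_{\rloc}(\mathcal{E}_2)$.

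The third bullet, the centrality statement, is what I expect to be the main obstacle. My strategy would be to show that every object in the essential image of $\HC^{\rst}_{\rloc}$ admits, up to homotopy, a representative whose underlying module is pulled back along the composite map $\bar{\cx}^{\rst}\to \Hilb_n(\CC^2)$ and whose curved differential involves only functions of the joint spectrum of $(X,Y)$ rather than the group variable $g$. Such ``diagonally supported'' matrix factorizations commute with any element of the convolution algebra $\overline{\MF}_n^{\rst}$, because after using the shrinking Lemma~\ref{lem:shrink} to restrict attention to a neighborhood of $g=1$, both $\Phi(\beta)\,\bar{\star}\,\HC^{\rst}_{\rloc}(\mathcal{E})$ and $\HC^{\rst}_{\rloc}(\mathcal{E})\,\bar{\star}\,\Phi(\beta)$ degenerate from honest convolutions into ordinary tensor products, which are symmetric. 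The delicate point is making the resulting isomorphism natural and compatible with the braid relations; I would verify this first on the generators $\bar{\calC}_+^{(k)}$ and $\bar{\calC}_-^{(k)}$ by an explicit Koszul manipulation in the spirit of Section~\ref{sec:computation}, and then extend by the monoidal structure and the already-established braid relations. The hardest step within this program is controlling the equivariant corrections introduced by the $\CE_\frb$ model, since the centrality isomorphism must be upgraded from a homotopy of underlying complexes to a morphism compatible with the full weak $B^2$-equivariance data.
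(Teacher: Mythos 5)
Your proposal is built around the wrong correspondence, and this undermines all three bullets at the foundational level. You propose to realize $\CH^{\rst}_{\rloc}$ as $\pi_*(\CE_\frb(j_e^*\Psi(\calF)))$ through $\FHilb_n^{free}$, ``which maps to $\Hilb_n(\CC^2)$ by the natural $B$-quotient followed by identification of a cyclic pair with its joint spectrum.'' But on $\FHilb_n^{free}$ the matrices $X,Y$ do \emph{not} commute, so there is no joint spectrum and no such map; even on the commuting sublocus $\FHilb_n\subset\FHilb_n^{free}$ the map to $\Hilb_n(\CC^2)$ forgets the flag data and is far from an embedding, so it cannot serve as a correspondence kernel in the way you intend. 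What you have actually written down is the geometric closure/trace map $\mathbb{L}:\overline{\MF}_n^{\rst}\to D^{per}_{T_{sc}}(\FHilb_n^{free})$ from Section~\ref{sec:geom-trace-oper}, not the Chern functor. The paper's construction (Section~\ref{sec:chern-funct-deta}) is genuinely different: $\CH$ first lands in the category $\MF_{\dr}=\MF_G(\scc,W_{\dr})$ over the Drinfeld-type space $\scc=\frg\times G\times\frg$ with the \emph{full} conjugation action of $G$, via the Fourier--Mukai correspondence $\scZ_{\CH}=\frg\times G\times\frg\times G\times\frb$ with kernel $\mathrm{K}_{\CH}=[X-\Ad_{g^{-1}}X,\,\Ad_hY-Z]$; only afterwards does one pass to $D^{per}_{T_{sc}}(\Hilb_n)$ by localizing $g$ near the identity (Proposition~\ref{prop:loc-iso}) and applying linear Koszul duality $\mathrm{KSZ}^*_\frg$ (Section~\ref{sec:line-kosz-dual}). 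The Hilbert scheme arises as the $G$-quotient of the stable derived commuting variety, not as a $B$-quotient of upper-triangular triples.

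This mis-identification propagates into the later bullets. For monoidality of $\HC^{\rst}_{\rloc}$ you invoke a ``self-composition of $\mathcal{K}$ across the middle leg'' collapsing to the diagonal, but with your correspondence the middle leg is $\FHilb_n^{free}$ and the claimed collapse is not justified; in the actual construction, monoidality comes from the fact that the $G$-equivariant Drinfeld category $\MF_{\dr}$ carries its own convolution structure (essentially group multiplication in the $G$-factor of $\scc$) and the co-Chern functor intertwines it with $\bar\star$ on $\MF_n^{\rst}$ by a base-change computation over $\scZ_{\CH}$. For centrality (third bullet), the essential mechanism is the extra $G$-equivariance (under conjugation) carried by objects of $\MF_{\dr}^{\rst}$: after passing through $\HC^{\rst}_{\rloc}$ this produces matrix factorizations with a hidden $G$-invariance on the $B^2$-side, and it is this structure — not a shrinking-lemma reduction to $g=1$ followed by commutativity of ordinary tensor products — that forces them to be central in the convolution algebra. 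Your shrinking argument would at best give an \emph{a posteriori} homotopy of underlying complexes near $g=1$, but, as you yourself flag, it does not produce the natural transformation compatible with the weak $B^2$-equivariance data, and you have not offered a mechanism to produce one; the paper gets this for free from the $G$-equivariance of $\mathrm{K}_{\CH}$ and of $\scc$. You would need to replace $\FHilb_n^{free}$ by the Drinfeld correspondence, or at minimum explain how your candidate kernel encodes the $G$-equivariance that is doing the real work.
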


As a manifestation of the categorified Riemann-Roch formula, we obtain a new interpretation for the triply-graded homology:
\begin{theorem}\cite{OblomkovRozansky18a}\label{thm:main}
  For any \(\beta\in \Br_n\) we have:
  \[\mathrm{HHH}(\beta)=\Hom(\calO,\CH^{\rst}_{\rloc}(\Phi(\beta))\otimes \Lambda^\bullet\cb).\]
\end{theorem}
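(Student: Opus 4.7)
Plan: The statement is a categorified projection-formula/Riemann--Roch identity comparing the original definition $\HHH(\beta) = \bigoplus_i \mathbb{H}^*((\Phi(\beta) \otimes \Lambda^i\calB)^B)$ from Section~\ref{sec:geom-trace-oper} with a hypercohomology computation on $\Hilb_n$ obtained via the Chern functor. The overall strategy is to use the three bullets of the preceding theorem (the adjunction between $\CH^{\rst}_{\rloc}$ and $\HC^{\rst}_{\rloc}$, the monoidality of $\HC^{\rst}_{\rloc}$, and the fact that its image commutes with $\Phi(\beta)$) to derive a projection formula that moves the twist by $\Lambda^\bullet\calB$ across the Chern functor, and then to compare the resulting matrix--factorization $\Hom$ directly with the trace description of $\HHH$.

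Concretely, the first step is to extract from monoidality of $\HC^{\rst}_{\rloc}$ and the adjunction a projection formula of the form
\[\CH^{\rst}_{\rloc}(\Phi(\beta)) \otimes \calG \;\cong\; \CH^{\rst}_{\rloc}\bigl(\Phi(\beta) \star \HC^{\rst}_{\rloc}(\calG)\bigr), \qquad \calG \in D^{per}_{T_{sc}}(\Hilb_n),\]
where the third bullet of the preceding theorem ensures that the convolution on the right is well defined up to ordering. Setting $\calG = \Lambda^\bullet\calB$, applying $\Hom(\calO,-)$, using the adjunction once more, and noting that a monoidal functor sends units to units so that $\HC^{\rst}_{\rloc}(\calO_{\Hilb_n}) = \bar{\mathds{1}}_n$, reduces the right-hand side of the theorem to
\[\Hom\bigl(\bar{\mathds{1}}_n,\; \Phi(\beta) \star \HC^{\rst}_{\rloc}(\Lambda^\bullet\calB)\bigr)\]
computed in the source category $\MF^{\rst}_n$ of $\CH^{\rst}_{\rloc}$.

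The remaining and principal task is to identify the two ingredients appearing in this $\Hom$: the object $\HC^{\rst}_{\rloc}(\Lambda^\bullet \calB_{\Hilb})$ and the functor $\Hom(\bar{\mathds{1}}_n, -)$. For the first, I would trace through the definition of $\HC^{\rst}_{\rloc}$ and exploit that $\calB^\vee_{\Hilb}|_I = \CC[x,y]/I$ is the fiberwise description of the Hilbert scheme tautological bundle, which matches, after the $B$-quotient along the locus cut out by $j_e$, the tautological bundle used in Section~\ref{sec:geom-trace-oper}; this should yield a natural isomorphism $\HC^{\rst}_{\rloc}(\Lambda^i\calB_{\Hilb}) \cong \Lambda^i\calB$ in the matrix factorization category. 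For the second, I would unwind the convolution with $\bar{\mathds{1}}_n = i_*(\underline{\CC[\calXr(B_n)]})$ explicitly: composing the definition of $\star$ with the Chevalley--Eilenberg realization of $B$-invariants along the diagonal $B \hookrightarrow B^2$ collapses to precisely the trace operator $j_e^*(-)^B$ of Section~\ref{sec:geom-trace-oper}, identifying $\Hom(\bar{\mathds{1}}_n, \Phi(\beta)\star\HC^{\rst}_{\rloc}(\Lambda^\bullet\calB))$ with $\bigoplus_i \mathbb{H}^*((\Phi(\beta)\otimes\Lambda^i\calB)^B) = \HHH(\beta)$. The main obstacle is the first identification --- an explicit computation of $\HC^{\rst}_{\rloc}$ on the Hilbert scheme tautological bundle --- once this is secured, the theorem follows by assembling the isomorphisms above.
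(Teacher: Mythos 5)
Your high-level strategy---reducing the right-hand side of the theorem to a computation in $\MF^{\rst}_n$ by using the adjunction and monoidality bullets, then matching against the trace description of $\HHH$---is the right instinct, and the fact that the cited theorem packages exactly those three properties strongly suggests this is the intended route. But as written the argument has two genuine gaps.

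First, the projection formula
\[\CH^{\rst}_{\rloc}(\Phi(\beta)) \otimes \calG \;\cong\; \CH^{\rst}_{\rloc}\bigl(\Phi(\beta) \star \HC^{\rst}_{\rloc}(\calG)\bigr)\]
does not follow from monoidality plus adjunction alone. What one gets for free, when $\HC^{\rst}_{\rloc}$ is a strong monoidal left adjoint, is a canonical \emph{morphism} in one direction (the lax module structure on the right adjoint), not an isomorphism; upgrading it to an isomorphism requires extra input such as dualizability or a Frobenius-type compatibility, neither of which you supply or cite. In fact, you can avoid the projection formula entirely: since $\Lambda^\bullet\calB$ is a vector bundle, hence dualizable, $\Hom(\calO, \CH^{\rst}_{\rloc}(\Phi(\beta)) \otimes \Lambda^\bullet\calB) = \Hom(\Lambda^\bullet\calB^\vee, \CH^{\rst}_{\rloc}(\Phi(\beta)))$, and a single application of adjunction gives $\Hom(\HC^{\rst}_{\rloc}(\Lambda^\bullet\calB^\vee), \Phi(\beta))$. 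This is both more robust and shows that the real content lies elsewhere.

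Second, and more seriously, the two identifications you defer are precisely where the work is, and both are asserted rather than argued. The claim that $\HC^{\rst}_{\rloc}(\Lambda^i\calB_{\Hilb}) \cong \Lambda^i\calB$ inside $\MF^{\rst}_n$ is a nontrivial Fourier--Mukai computation through the kernel $\mathrm{K}_{\CH}$ and the Koszul-duality/localization sandwich defining $\CH^{\rst}_{\rloc}$; you can't read it off from the fiberwise description $\calB^\vee|_I = \CC[x,y]/I$ alone, because $\HC^{\rst}_{\rloc}$ is not restriction along $j_e$. Likewise, $\Hom(\bar{\mathds{1}}_n, -)$ in the convolution category and the trace $j_e^*(-)^B$ followed by hypercohomology are not \emph{a priori} the same functor: the former is (roughly) a Hochschild-cohomology-type invariant, while the trace defining $\HHH$ is a Hochschild-homology-type invariant, and matching them up involves a Serre-duality shift and the $\Lambda^\bullet\calB$ twist in a way you would need to make explicit. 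Until both of these identifications are established, the proposal is a plausible outline rather than a proof.
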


Let us outline the construction of the Chern functor in the next subsection.

\subsection{The Chern \(\CH\)}
\label{sec:chern-funct-deta}
First we will construct the functor between the categories \(\MF\) and \(\MF_{\dr}\) where the last category is defined as
a stable version of the category of equivariant matrix factorizations:
\[
\MF_{\dr}:=\MF_{G}(\scc,W_{\dr}),\quad
\scc=\frg\times G\times \frg,\quad W_{\dr}(Z,g,X)=\Tr(X(Z-\Ad_gZ)),
\]
the group $\GL_n$ acting on components of $\scc$ by conjugation. The stable version of
the category is defined as category of matrix factorizations on  the slightly enlarged space:
\[\scc^{st}\subset \scc\ti V,\quad (Z,g,X,v)\in \scc^{st}\mbox{ iff } gv=v.\]
Both stable and unstable versions of the categories fit into the diagram:
\begin{equation*}
  \begin{tikzcd}
    \MF^{\bullet}\arrow[rr,bend left,"\CH^{\bullet}"]&& \MF_{\dr}^{\bullet}\arrow[ll,bend left,"\HC^{\bullet}",pos=0.435]
  \end{tikzcd},
\end{equation*}
where \(\bullet\) can be either \(st\) or \(\emptyset\).

To lighten the exposition we explain only the construction for the functors \(\CH\) and \(\HC\), the stable
version is an easy modification of the construction, see \cite{OblomkovRozansky18a}.
 We need two auxiliary spaces in order to define the Chern functor: 
\[\scz^0_{\CH}=\frg\times G\times\frg\times G\times\frn,\quad \scZ_{\CH}=\frg\times G\times\frg\times G\times\frb\]
The action of \(G\times B\) on these spaces is
\[(k,b)\cdot (Z,g,X,h,Y)=
  (\Ad_{k}(Z),\Ad_k(g),\Ad_k(X),khb,\Ad_{b^{-1}}(Y))\]
and the invariant potential is
\[W_{\CH}(Z,g,X,h,Y)=\Tr(X(\Ad_{gh}(Y)-\Ad_{h}(Y))).\]
The spaces \(\scC\) and \(\scX\) are endowed with the standard \(G\times B^2\)-equivariant structure, the action of  \(B^2\) on \(\scC\) is trivial.
The following maps
\[\pi_{\Dr}:\mathscr{Z}_{\CH}\rightarrow \mathscr{C},\quad f_\Delta:\scZ_{\CH}^0\rightarrow \mathscr{X}\quad j^0:\scZ^0_{\CH}\to \scZ_{\CH}.\]
\[\pi_{\Dr}(Z,g,X,h,Y)=(Z,g,X),\quad f_\Delta(Z,g,X,h,Y)=(X,gh,Y,h,Y)\]
are  fully equivariant if we restrict the \(B^2\)-equivariant structure on
\(\scX\) to the \(B\)-equivariant structure via the diagonal embedding
\(\Delta:B\rightarrow B^2\). Note that $j^0$ is the inclusion map.

The kernel of the Fourier-Mukai transform is the Koszul matrix factorization
\[\mathrm{K}_{\CH}:=[X-\Ad_{g^{-1}}X,\Ad_hY-Z]\in \MF(\scZ_{\CH},\pi_{\Dr}^*(W_{\Dr})-f^*_\Delta(W)).\]
and we define the Chern functor:
\begin{equation}\label{eq:CH}
  \CH(\calC):=\pi_{\dr*}(\CE_{\frn}(\mathrm{K}_{\CH}\otimes (j^0_*\circ f^*_\Delta(\calC)))^{T}).
\end{equation}
We also define the co-Chern functor $\HC$ as the adjoint functor that goes in the opposite direction:
\(\HC\colon\MF_{\Dr}\rightarrow \MF.\)
Thus, the functor $\HC$ is the composition of adjoints of all the functors that appear in the formula~\eqref{eq:CH}.



The product \(\scZ_{\CH}\times B\) has a \(B\times B\)-equivariant structure: for $(p,g)\in \scZ_{\CH}\times B $ we define
\[
(h_1,h_2)\cdot (p, g) = (h_1\cdot p,h_1 g h_2^{-1})
\]
%
Then the following map is \(B^2\)-equivariant:
\[\tilde{f}_{\Delta}:\scZ_{\CH}^0\times B\rightarrow \scX\times B,\]
\begin{equation*}\tilde{f}_\Delta(Z,g,X,h,Y,b)=(X,gh,Y,hb,\Ad_bY,b).\end{equation*}
The map \(\tilde{f}_\Delta\) is a composition of the projection along the first factor of \(\scZ_{\CH}\) and
the embedding inside \(\scX\times B\). The embedding is defined by the formula
\[\Ad_bY_1=Y_2,\]
so it is a regular embedding. Thus since
\[j^{0*}(\mathrm{K}_{\CH}\otimes \tilde{\pi}_{\Dr}^*(\calD))\in\MF_{G\times B^2}(\scZ_{\CH}\times B,\tilde{f}_\Delta^*(W)),\]
where \(\tilde{\pi}_{\Dr}:\scZ_{\HC}\times B\rightarrow\scC\) is a natural extension of map
\(\pi_{\Dr}\) by the projection along \(B\),
we have a well-defined matrix factorization \(\tilde{f}_{\Delta *}\circ j^{0*}(\mathrm{K}_{\CH}\otimes \pi_{\Dr}^*(\calD))\in \MF_{G\times B^2}(\scX\times B,\pi_B^*(W)),\) where \(\pi_B\) is the projection along the last factor. Now we can define:
\begin{equation}\label{eq:HC}\HC(\calD):=\pi_{B*}(\tilde{f}_{\Delta*}\circ j^{0*}(\mathrm{K}_{\CH}\otimes \pi_{\Dr}^*(\calD))).\end{equation}

\subsection{Linear Koszul duality}
\label{sec:line-kosz-dual}

We need to relate the category \(\MF_{\Dr}^{st}\) and the category \(D^{per}_{T_{sc}}(\Hilb)\). This relation is a particular example
of the linear Koszul duality. Let us discuss the linear Koszul duality in general.

Derived algebraic geometry is explained in many places, here we explain it  in the most elementary setting sufficient for our needs.
%

Initial data for an affine derived complete  intersection is a collection of elements
\(f_1,\dots,f_m\in\CC[X]\). It determines the differential graded algebra
\[\calR=(\CC[X]\otimes \Lambda^*U,D),\quad D=\sum_{i=1}^mf_i\frac{\partial}{\partial \theta_i},\]
where \(\theta_i\) from a basis of \(U=\CC^m\).

More generally, given a dg algebra \(\calR\) such that \(H^0(\calR)=\calO_Z\) we say that \(\mathop{Spec}(\calR)\) is a dg
scheme with underlying scheme \(Z\). Respectively, we define dg category of coherent sheaves on
\(\mathop{Spec}(\calR)\) as
\[\Coh(\mathop{Spec}(\calR))=\frac{\{\text{bounded complexes of finitely generated  $\calR$  dg modules}\}}{\{\text{quasi-isomorphisms}\}}.\]

Consider a potential on \(X\times U\):
\[W=\sum_{i=1}^m f_i(x)z_i,\]
where \(z_i\) is a basis of \(U^*\) dual to the basis \(\theta_i\). For the Koszul matrix factorization:
\[\MF(X\ti U,W)\ni\mathrm{B}=(\calR\otimes \CC[U], D_B),\quad D_B=\sum_{i=1}^m z_i\theta_i+f_i\frac{\partial}{\partial \theta_i}.\]
and for a \((M,D_M)\)  dg module over \(\calR\),  the tensor product
\[\mathrm{KSZ}_U(M):=M\otimes_{\CC[X]\otimes\Lambda^*(U)} \mathrm{B}\]
is an object of \(\MF(X\times U,W)\) with the differential \(D=D_M\otimes 1+ 1\otimes D_B\). The map \(\mathrm{KSZ}_U\) extends to a functor between triangulated categories:
\[\mathrm{KSZ}_U\colon \Coh(\Spec(\calR))\rightarrow \MF(X\times U,W).\]

The functor in the other direction is based on the dual matrix factorization:
\[\MF(X\times U,-W)\ni B^*=(\calR\otimes \CC[U],D_B^*),\quad D^*_B=-\sum_{i=1}^m z_i\theta_i+f_i\frac{\partial}{\partial \theta_i},\]
\[\mathrm{KSZ}_U^*\colon\MF(X\times U,W)\rightarrow \Coh(\Spec(\calR)),\quad \mathrm{KSZ}^*_U(\calF):=
  \Hom_{\calR}(\calF\otimes_{\CC[X\times U]}\mathrm{B}^*,\calR).\]

\begin{theorem} The compositions of the functors:
  \[\mathrm{KSZ}_U\circ \mathrm{KSZ}_U^*,\quad \mathrm{KSZ}_U^*\circ \mathrm{KSZ}_U\]
  are autoequivalences of the corresponding categories. 
\end{theorem}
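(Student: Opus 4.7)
The plan is to prove the compositions are autoequivalences by exhibiting natural isomorphisms to the identity functor (up to an overall homological shift). Both $\mathrm{KSZ}_U$ and $\mathrm{KSZ}_U^*$ are integral transforms with kernels $\mathrm{B}$ and $\mathrm{B}^*$; the compositions are then controlled by a convolution of these kernels, and the heart of the argument is to identify this convolution with the diagonal bimodule over $\calR$.

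First I would verify well-definedness on the respective triangulated categories. On $M\otimes_\calR\mathrm{B}$ the total differential squares to $W$ since $D_M^2=0$, $D_B^2=W$, and the cross-term vanishes by the $\calR$-linearity of the tensor product. Dually, $\calF\otimes_{\CC[X\times U]}\mathrm{B}^*$ lies in $\MF(X\times U,0)=D^{\mathrm{per}}(X\times U)$ because $(D_B^*)^2=-W$, and $\Hom_\calR(-,\calR)$ then delivers a dg $\calR$-module. Exactness and preservation of homotopy classes are routine verifications.

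Second, the core computation is to show that the kernel of $\mathrm{KSZ}_U^*\circ\mathrm{KSZ}_U$, namely $\Hom_\calR(\mathrm{B}\otimes_{\CC[X\times U]}\mathrm{B}^*,\calR)$, is quasi-isomorphic to $\calR$ viewed as a bimodule over itself. The tensor product $\mathrm{B}\otimes_{\CC[X\times U]}\mathrm{B}^*$ sits on $\CC[X]\otimes\Lambda^*\langle\theta_i\rangle\otimes\Lambda^*\langle\theta_i'\rangle\otimes\CC[U]$, carrying the combined differential from $D_B$ and $D_B^*$. Passing to the variables $\theta_i^{\pm}=\theta_i\pm\theta_i'$, the linear piece $\sum z_i\theta_i^-$ becomes the standard Koszul differential in the $z_i$ on $\CC[U]\otimes\Lambda^*\langle\theta_i^-\rangle$, which contracts to a copy of $\CC$ concentrated in top degree $m=\dim U$. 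What remains is a complex on $\calR\otimes\Lambda^*\langle\theta_i^+\rangle$ with residual differential $\sum f_i\partial_{\theta_i^+}$, and this is precisely the Koszul resolution of the diagonal $\calR$-bimodule. Applying $\Hom_\calR(-,\calR)$ then gives $\calR$ back (up to a shift by $m$), so $\mathrm{KSZ}_U^*\circ\mathrm{KSZ}_U(M)\simeq M$. The other composition follows by an essentially symmetric argument, contracting in the $\theta$-directions against the $U^*$-coordinates.

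The main obstacle is step two, controlling the interaction between the linear Koszul differential in the $z_i$ and the curving terms $f_i\partial_{\theta_i}$ so that the contributions from $D_B^2=W$ and $(D_B^*)^2=-W$ cancel exactly. In the uncurved case $f_i=0$ this reduces to the classical Priddy/Koszul self-duality between $\Sym(U)$ and $\Lambda^*(U^*)$; the matrix factorization enhancement requires a careful deformation argument, structured for example as a spectral sequence in the filtration by $\theta^-$-degree, to verify that the curved corrections assemble into the diagonal bimodule structure on $\calR$ rather than obstructing the contraction. A secondary technical point is compatibility with the two-periodic folding of $\MF$, which is handled by the standard construction and does not affect the substance of the argument.
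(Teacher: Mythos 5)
The paper itself does not contain a proof of this theorem: it simply defers to \cite{Isik10}, \cite{ArkhipovKanstrup15}, and \cite{OblomkovRozansky18a}. So a line-by-line comparison with ``the paper's proof'' is not possible; I can only assess your argument on its own merits. Your overall strategy — reduce the question to identifying the kernel of the composition, then change coordinates to $\theta_i^\pm = \theta_i \pm \theta_i'$ and contract the $\theta^-$ Koszul factor against the $z_i$'s — is indeed the standard and correct mechanism underlying this linear Koszul duality, and the observation that the total differential on $\mathrm{B}\otimes_{\CC[X\times U]}\mathrm{B}^*$ decouples into $\sum z_i\theta_i^-$ plus $\sum f_i\partial_{\theta_i^+}$ is the right one.

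There is, however, a genuine gap in how you handle the $\Hom_\calR(-,\calR)$ that is built into the definition of $\mathrm{KSZ}_U^*$, and it affects your conclusion. You argue that $\mathrm{B}\otimes_{\CC[X\times U]}\mathrm{B}^*$ contracts to the (shifted) diagonal bimodule $\calR[m]$, and then assert that ``applying $\Hom_\calR(-,\calR)$ then gives $\calR$ back, so $\mathrm{KSZ}_U^*\circ\mathrm{KSZ}_U(M)\simeq M$.'' But $\Hom_\calR(-,\calR)$ is contravariant, so $\mathrm{KSZ}_U^*$ is a contravariant functor and both compositions in the theorem are contravariant. Tracing your own computation through gives $\mathrm{KSZ}_U^*(\mathrm{KSZ}_U(M))\simeq\Hom_\calR(M\otimes_\calR\calR[m],\calR)\simeq M^\vee[-m]$, a duality rather than the identity. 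This does not break the theorem — the statement only requires the composition to be an autoequivalence, and a shifted $\calR$-linear duality is one on a suitable category of (perfect, or at least reflexive) dg modules — but it does mean the last step of your argument must be replaced. You need to either (i) identify exactly which biduality/reflexivity statement on $\Coh(\Spec\calR)$ and $\MF(X\times U,W)$ you are invoking to conclude that $M\mapsto M^\vee[-m]$ is an equivalence, or (ii) re-examine whether the intended definition of $\mathrm{KSZ}_U^*$ can be rewritten covariantly as $\calF\mapsto\calF\otimes_{\CC[X\times U]}\mathrm{B}^{*}{}^\vee$ under appropriate finiteness hypotheses, in which case the kernel computation really does give the identity. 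In either route there is a real issue of finiteness lurking (for instance $\calF\otimes_{\CC[X\times U]}\mathrm{B}^*$ still carries a free $\CC[U]$ factor in the non-stable setting), and this needs to be controlled for $\Hom_\calR(-,\calR)$ to behave. A secondary, smaller imprecision: after the contraction, what survives is $\CC[X]\otimes\Lambda^*\langle\theta^+\rangle$ with differential $\sum f_i\partial_{\theta_i^+}$, which \emph{is} a copy of $\calR$ carrying the diagonal bimodule structure; calling it ``$\calR\otimes\Lambda^*\langle\theta_i^+\rangle$'' and ``the Koszul resolution of the diagonal $\calR$-bimodule'' double-counts the exterior variables and conflates the bimodule with its resolution.
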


Proof of this theorem could be found in \cite{Isik10}, \cite{ArkhipovKanstrup15} or one can consult \cite{OblomkovRozansky18a} for a more
streamlined argument.
\subsection{Linearization}
\label{sec:linear}

We would like to apply the  Koszul duality in our situation. The complication  in our case is that
we want to eliminate the group factor in the space \(\mathscr{C}^{st}\) but the group is not a linear space.
Thus we have to restrict ourselves to the neighborhood of the identity and linearize the potential in
this neighborhood and as we explain below it could be done with localization.

A coordinate substitution $Y = U g^{-1}$
on our main variety \(\scC\) makes the potential tri-linear:
\[\underline{W}_{\Dr}(X,U,g)=\Tr(X[U,g])=W_{\Dr}(X,Ug^{-1},g).\]

Thus we introduce linearized categories:
\[\underline{\MF}^\bullet_{\Dr}:=\MF_{G}(\underline{\scC}^{\bullet},\underline{W}_{\rlin}),\]
where \(\underline{\scC}^\bullet\) is obtained from \(\scC^\bullet\) by taking the closure of \(G\)
 inside \(\frg\).

Since \(j_{G}\colon\scC^\bullet\hookrightarrow\underline{\scC}^\bullet\) is an open embedding, the pull-back functor \(j_G^*\)
is a localization functor and we denote
\[\mathrm{loc}^\bullet\colon\MF^\bullet_{\Dr}\rightarrow\underline{\MF}^\bullet_{\Dr}\]
for this functor.

\begin{prop}\cite{OblomkovRozansky18a}\label{prop:loc-iso}
  The functors \(\mathrm{loc}^{\rst}\) are isomorphisms.
\end{prop}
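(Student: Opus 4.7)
The plan is to apply the shrinking lemma (Lemma~\ref{lem:shrink}) to the open embedding $j_G\colon\scC^{\rst}\hookrightarrow\underline{\scC}^{\rst}$: since $\rloc^{\rst}=j_G^{*}$ and the complement $Z=\underline{\scC}^{\rst}\setminus\scC^{\rst}$ is the single divisor $\{\det g=0\}$, the functor $\rloc^{\rst}$ will be an equivalence once I exhibit an ideal $I\subset\CC[\underline{\scC}^{\rst}]$ whose elements act by zero-homotopic endomorphisms on every object and whose vanishing locus $Z'$ is disjoint from $Z$.

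The natural candidate is the Jacobian ideal $I_{\mathrm{crit}}$ of the trilinear potential $\underline{W}_{\Dr}(X,U,g)=\Tr(X[U,g])$, whose three partial-derivative generators are precisely the matrix entries of the commutators $[U,g]$, $[g,X]$ and $[X,U]$. Proposition~\ref{prop:crit} then immediately gives that $I_{\mathrm{crit}}$ acts by zero-homotopies on any $\calF\in\underline{\MF}^{\rst}_{\Dr}$, so the geometric content remaining is to check that on the stable locus every commuting triple $(X,U,g)$ satisfies $g\in G$. The cyclic vector $v$ encoded in the stable condition is the key: since $X$ and $U$ commute and $v$ is cyclic for $\CC\langle X,U\rangle$, the self-commutant of this commutative algebra is itself, so any $g$ commuting with $X$ and $U$ must lie in $\CC\langle X,U\rangle$ and can be written as a polynomial $g=p(X,U)$. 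The stability relation $gv=v$ then reads $p(X,U)v=v$, and one more application of cyclicity upgrades this to the operator identity $g=\mathrm{id}_V$, so in particular $\det g=1$ and $Z\cap Z'=\varnothing$. The shrinking lemma therefore yields $\calF\simeq j_G^{*}\calF$ for every $\calF\in\underline{\MF}^{\rst}_{\Dr}$, giving essential surjectivity of $\rloc^{\rst}$.

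The main obstacle will be upgrading this pointwise statement to a genuine equivalence of triangulated categories. One must run the parallel shrinking argument on every Hom-complex $\uHom(\calF_1,\calF_2)$, which is itself a curved two-periodic complex of $\CC[\underline{\scC}^{\rst}]$-modules on which $I_{\mathrm{crit}}$ again acts by zero-homotopies by the same Koszul deformation argument as in Proposition~\ref{prop:crit}. Moreover one has to verify that the homotopies produced in both Proposition~\ref{prop:crit} and Lemma~\ref{lem:shrink} can be chosen simultaneously $G$-equivariantly, so that the resulting homotopy equivalences descend to the enriched category $\underline{\MF}^{\rst}_{\Dr}=\MF_{G}(\underline{\scC}^{\rst},\underline{W}_{\Dr})$ and not merely to its forgetful non-equivariant version; in practice this will use the reductivity of $G$ to average the homotopies over a maximal compact subgroup.
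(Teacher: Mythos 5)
Your central idea---that the critical scheme of $\underline{W}_{\Dr}=\Tr(X[U,g])$ on the stable locus sits entirely inside $\{\det g\neq 0\}$, and in fact inside $\{g=\mathrm{id}\}$---is the right one, and the commutant argument you give for it is exactly the intended mechanism: a commutative subalgebra of $\End(V)$ with a cyclic vector is its own commutant, so on the critical locus $g$ is a polynomial in the commuting matrices, and then $gv=v$ together with cyclicity forces $g=\mathrm{id}$. Your identification of the partials of $\Tr(X[U,g])$ with the three pairwise commutators is also correct.

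However, you overstate what the shrinking lemma delivers. The shrinking lemma shows that every $\calF\in\underline{\MF}^{\rst}_{\Dr}$ is homotopy equivalent, over $\CC[\underline{\scC}^{\rst}]$, to its restriction to $\scC^{\rst}$; this gives \emph{full faithfulness} of $j_G^*=\rloc^{\rst}$, since $\Hom$'s over $\CC[\underline{\scC}^{\rst}]$ between the restrictions coincide with $\Hom$'s over $\CC[\scC^{\rst}]$. It does \emph{not} give essential surjectivity. Given an arbitrary $\calG\in\MF^{\rst}_{\Dr}$ presented by free $\CC[\scC^{\rst}]$-modules with a curved differential whose matrix entries genuinely involve $(\det g)^{-1}$, you still have to produce an object of $\underline{\MF}^{\rst}_{\Dr}$ restricting to $\calG$; naively clearing denominators rescales the curvature and restriction of scalars along $j_G$ destroys freeness, so a genuine extension (or generation) argument is needed here. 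This step is where the real work of the proposition lies, and your write-up asserts it rather than supplies it.

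Two smaller points. First, the stability as quoted in these notes is only $gv=v$; your argument also needs $v$ to be cyclic for the algebra generated by the other matrices, and cyclicity is indeed necessary (with $gv=v$ alone, take $X=U=0$ and $g$ a rank-one projector fixing $v$: this lies on the stable critical locus but has $\det g=0$). You should say explicitly that you are reading cyclicity into the stability rather than silently using it. Second, your remark about choosing the homotopies $G$-equivariantly by averaging over the maximal compact is the right fix for a reductive $G$, so that concern is handled.
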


Since the potential \(\underline{W}\) is linear as a function of \(g\in\frg\) and the scaling torus \(T_{sc}\) does not act on
\(g\), we obtain a pair of mutually inverse functors:
\[
  \begin{tikzcd}
    \underline{\MF}^\bullet_{\Dr}\arrow[rr,bend left,"\mathrm{KSZ}_{\frg}^*"]&&\Coh^\bullet\arrow[ll,bend left,"\mathrm{KSZ}_\frg"]
  \end{tikzcd}
\]
here \(\Coh^{st}\) is the two-periodic derived category \(D^{per}(\Hilb)\) and
\(\Coh\) is the DG category of the commuting variety.

The  functors that we wanted to construct are defined by the composing the functors:

\[\CH^{\rst}_{\rloc}:=\CH^{\rst}\circ (\mathrm{loc}^{\rst})^{-1}\circ \mathrm{KSZ}^*_\frg: \MF^{\bullet}\rightarrow \mathrm{D}^{\rper}(\Hilb_n(\CC^2)).\]

The localization functor does not seem to be invertible in  the case of  \(\bullet=\emptyset\), however a construction of the
functor in the opposite direction does not require invertibility of the localization:
\[\HC^\bullet_{\rloc}:=\HC^\bullet\circ \mathrm{loc}^\bullet\circ \mathrm{KSZ}_\frg\colon\Coh^\bullet\rightarrow\MF^\bullet.\]

\subsection{Localization formulas}
\label{sec:local-form}

The advantage of this new interpretation is that the Hilbert scheme is smooth, unlike the flag Hilbert scheme which
is a homological support of \(\mathcal{E}\mathrm{xt}(\Phi(\beta),\Phi(1))\). So the complexes
on \(\Hilb\) are  more manageable than their flag counter-part. In support of this expectation, we apply the Chern functor to the
Jusys-Murphy (JM) subgroup inside \(\Br_n\) together with the parity property and prove an explicit localization formula for the sufficiently positive elements of the JM subgroup.

Recall that the JM subgroup is generated by the elements
\[
\delta_i=\sigma_i\sigma_{i+1}\dots\sigma_{n-1}^2\dots\sigma_{i+1}\sigma_i.
\]
It is not hard to see that these elements mutually commute and that the full twist from the introduction is the product:
\[\mathrm{FT}=\prod_{i=1}^{n-1}\delta_i.\]

It is expected that \(\CH^{\rst}_{\rloc}\) applied to the matrix factorization corresponding to the
sufficiently positive element of JM algebra is a sheaf supported in one homological degree, we
state the precise conjecture below. Modulo this geometric conjecture we have
 a (conditional on the conjecture) formula for the corresponding homology of the links.

\begin{theoremc}\label{thm:loc1} For any \(n\) there are \(N,M>0\) such that
 for a vector $\vec{b}\in \ZZ^{n-1}$ with \(a_{i+1}-a_{i}>N, a_2>M\)
  the $(q,t,a)$-character of the homology of the closure of the braid $\prod_{i=2}\delta^{b_i}$
 is given by the formula
 \[
  \dim_{a,Q,T}\mathrm{HHH}(\prod_{i=2}^{n}\delta^{b_i})=
\sum_T \prod_i\frac{z_i^{b_i}(1+az_i^{-1})}{1-z^{-1}}\prod_{1\le i<j\le n}\zeta(\frac{z_i}{z_j}),\]
  where \(\zeta(x)=\frac{(1-x)(1-QTx)}{(1-Qx)(1-Tx)}\), \(Q=q^2,T=t^2/q^2\).  The last sum is over all standard Young tableaux with
  \(z_i=Q^{a'(i)}T^{l'(i)}\), \(a',l'\) are co-arm and co-leg of the square the standard tableau with the square with the label \(i\).
\end{theoremc}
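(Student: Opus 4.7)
The plan is to combine the Chern-functor interpretation of Theorem~\ref{thm:main} with equivariant localization on the smooth Hilbert scheme $\Hilb_n(\CC^2)$. First, since $\HC^{\rst}_{\rloc}$ is monoidal and its image commutes with every $\Phi(\beta)$, the Chern functor converts the convolution of the mutually commuting JM braids $\delta_i$ into an honest tensor product of $T_{sc}$-equivariant complexes on $\Hilb_n(\CC^2)$. Thus one identifies $\mathcal{L}_i := \CH^{\rst}_{\rloc}(\Phi(\delta_i))$ as pairwise commuting line-bundle-like objects, and Theorem~\ref{thm:main} gives
\[
\HHH\Bigl(\prod_{i=2}^n\delta_i^{b_i}\Bigr)=\Hom\Bigl(\calO,\;\bigotimes_{i=2}^n \mathcal{L}_i^{\otimes b_i}\otimes\Lambda^\bullet\cb\Bigr).
\]
The commutation and monoidality listed after diagram~\eqref{eq:mnchcchf}, together with the linear Koszul duality of Section~\ref{sec:line-kosz-dual}, reduce the identification of $\mathcal{L}_i$ to an explicit computation with the Koszul matrix factorizations $\calC_+^{(k)}$ of Section~\ref{sec:gener-braid-group}; the product $\bigotimes_i \mathcal{L}_i$ must agree with $\det(\cb)$ by Theorem~\ref{thm:HilbC2}(1) applied to the full twist $\mathrm{FT}=\prod_i\delta_i$.

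The next step is to apply Atiyah-Bott localization. The $T_{sc}$-fixed points of $\Hilb_n(\CC^2)$ are the monomial ideals $I_\lambda$ indexed by partitions $\lambda\vdash n$, and the tangent character at each $I_\lambda$ produces the factors $\prod_{i<j}\zeta(z_i/z_j)$. To refine the resulting sum over partitions to a sum over standard tableaux, one uses the Procesi bundle or equivalently Haiman's isospectral Hilbert scheme: at $I_\lambda$ the joint eigenspaces of the operators $\mathcal{L}_i$ decompose via Young's seminormal form into one-dimensional lines indexed by standard tableaux $T$ of shape $\lambda$, with the eigenvalue of $\mathcal{L}_i$ on the line labeled by $T$ equal to the content monomial $z_i=Q^{a'(i)}T^{l'(i)}$ of the box labeled $i$ in $T$. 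Under this identification the $(1+az_i^{-1})$ numerators come from the fiber of $\Lambda^\bullet\cb$ at the SYT eigenvector, while the denominators $1/(1-z_i^{-1})$ record the normal directions to the nested flag filtration at the fixed point, matching the proposed formula term by term.

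For Euler-characteristic localization to output the honest $(a,q,t)$-Poincar\'e series rather than an alternating sum of characters, one needs parity vanishing: the complex $\bigotimes_i \mathcal{L}_i^{\otimes b_i}\otimes\Lambda^k\cb$ must have $T_{sc}$-equivariant cohomology concentrated in a single $\ZZ_2$-degree for every $k$. This is exactly the role of the thresholds $a_{i+1}-a_i>N$ and $a_2>M$ in the hypothesis: for $\vec b$ in this chamber the combination $\bigotimes_i\mathcal{L}_i^{\otimes b_i}$ is sufficiently ample along the flag directions that a Serre-vanishing argument, coupled with the degeneration of the spectral sequence of Theorem~\ref{thm: spec seq}, kills the odd cohomology. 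I would establish this by combining the positivity of the nested tautological line bundles on $\FHilb_n^{free}$ with the geometry of the map $\pi\colon\FHilb_n^{free}\to\CC\times\FHilb_{n-1}^{free}$ used in the Markov proof of Section~\ref{sec:second-markov-move}, iteratively reducing to one-strand positivity on each $\mathbb{P}^{n-1}$-fiber.

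The main obstacle is precisely this parity vanishing with explicit thresholds $N,M$: one cannot simply invoke an abstract Serre vanishing because the ample cone governing the $\mathcal{L}_i$ must be matched with the combinatorial positivity chamber $a_{i+1}-a_i>N$, and this matching seems to require a delicate case-by-case analysis of the interaction between $\mathcal{L}_i$ and the exterior algebra $\Lambda^\bullet\cb$. A secondary technical point is to verify that the SYT-indexed eigenvalues of $\mathcal{L}_i$ at $I_\lambda$ really are the contents $z_i=Q^{a'(i)}T^{l'(i)}$; this amounts to matching the categorical JM operators with Haiman's $y$-coordinates on the isospectral scheme, and should follow from comparing the Koszul presentation of $\calC_+^{(k)}$ with the Young seminormal form action on the Procesi bundle.
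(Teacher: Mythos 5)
Your proposal takes a genuinely different route from the paper, and along the way it contains one technical slip and one structural misconception worth flagging.

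First, the slip: you claim that since $\HC^{\rst}_{\rloc}$ is monoidal, ``the Chern functor converts the convolution \ldots{} into an honest tensor product,'' but the paper only asserts that the \emph{co}-Chern functor $\HC^{\rst}_{\rloc}$ is monoidal; being the adjoint of a monoidal functor does not make $\CH^{\rst}_{\rloc}$ monoidal, so this step needs its own argument. The paper sidesteps it entirely: instead of decomposing $\Phi(\prod_i\delta_i^{b_i})$ into a convolution of the $\Phi(\delta_i)$ and pushing each factor through $\CH$, it invokes Theorem~\ref{thm:kill-JM}, which gives $\Phi^{aff}(\Delta_i)=\Phi^{aff}(1)\langle\chi_i,0\rangle$ directly at the matrix-factorization level, hence $\Phi(\prod_{i=2}^n\delta_i^{b_i})=\calC_\parallel\langle\vec b,0\rangle$ is a Borel-character twist of the unit. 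This is a cleaner and more explicit starting point than your hypothetical ``line-bundle-like'' objects $\mathcal L_i$ on $\Hilb_n$, and it eliminates your entire secondary worry about matching $\mathcal L_i$ with Haiman's $y$-coordinates.

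Second, the structural point: the statement is a Theorem* precisely because it is \emph{conditional} on the parity conjecture of \cite{OblomkovRozansky18c} --- that for $a_{i+1}-a_i>N$ the complex $\CH^{\rst}_{\rloc}(\calC_\parallel\langle\vec a,0\rangle)$ is concentrated in even homological degree --- which the paper does not prove. You correctly diagnose parity vanishing as the crux, but you present it as something to be established by a Serre-vanishing argument along the fibers of $\pi:\FHilb_n^{free}\to\CC\times\FHilb_{n-1}^{free}$. That is not what the paper does (it only observes that ampleness of $\det\calB$ combined with the conjecture reduces the character to an Euler characteristic), and a plan that ends with ``the main obstacle is precisely this parity vanishing'' is an honest acknowledgment of a gap, not a proof.

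Third, once one is down to an Euler characteristic your plan is to apply Atiyah--Bott localization at the monomial-ideal fixed points of $\Hilb_n(\CC^2)$ and refine partitions into standard tableaux via the Procesi bundle and Young seminormal form. The paper instead computes the $K$-class $\prod_{i<j}(1-qt\,\calL_i/\calL_j)$ of $\CH^{\rst}_{\rloc}(\calC_\parallel^{ev})$ and pushes it forward iteratively along the $\mathbb{P}^{n-1}$-fibrations $\pi$ on $\FHilb_n^{free}$ using the contour-integral push-forward formula, arriving at the iterated residue
\[\int\cdots\int\;\prod_i\frac{z_i^{b_i}(1+az_i^{-1})}{1-z^{-1}}\prod_{i<j}\zeta\bigl(z_i/z_j\bigr)\frac{dz_1}{z_1}\cdots\frac{dz_n}{z_n},\]
whose reduction to a sum over standard tableaux is then cited from Negut's analysis of the $K$-theory of the flag Hilbert scheme. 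The residue route produces the SYT indexing automatically from the pole structure of the nested contour integrals, whereas the Procesi-bundle route requires the delicate identification of the categorical Jucys--Murphy operators with Haiman's coordinates on the isospectral scheme --- a genuine additional difficulty that the paper's argument avoids.
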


The  proof  has two components. The first component is concerned with actual computation of the matrix factorization
\(\Phi(\prod_{i=2}\delta_i^{b_i})\). This  computation is an easy consequence of our construction of \(\Phi^{aff}\):

\begin{theorem}\cite{OblomkovRozansky17} \label{thm:kill-JM}For any $i=1,\dots, n$ we have
$$\Phi^{aff}(\Delta_i)=\Phi^{aff}(1)\langle\chi_i,0\rangle.$$
\end{theorem}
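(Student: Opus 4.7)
The plan is to proceed by downward induction on $i$ starting from the base case $i = n$, using the immediate recursion
\[
\Delta_i = \sigma_i\,\Delta_{i+1}\,\sigma_i,\qquad i=1,\dots,n-1,
\]
which follows directly from the Bernstein-Lusztig formula of Section~\ref{sec:braid-groups-1}. This splits the argument into (i) a base case that is built into the construction of $\Phi^{aff}$, and (ii) a single commutation lemma that implements a Weyl reflection on characters.

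\textbf{Base case $i=n$.} I would unpack the geometric construction of $\Phi^{aff}(\Delta_n)$ from \cite{OblomkovRozansky17}: the affine generator $\Delta_n$, whose geometric content is that the last strand wraps once around the flag pole, is realized by tensoring the unit $\bar{\mathds{1}}_n$ with the $B$-equivariant line bundle on $\bar{\cx}$ carrying the character $\chi_n$ on the left $B$-factor and trivial as a coherent sheaf. The assignment $\Phi^{aff}(\Delta_n)\cong\bar{\mathds{1}}_n\langle\chi_n,0\rangle$ holds essentially by construction; one must only check that it respects the defining affine braid relations. The non-trivial relation $\sigma_i\Delta_n=\Delta_n\sigma_i$ for $i<n-1$ is automatic since the underlying sheaf is trivial (so convolving with it is a pure twist) and $\chi_n$ is fixed by all simple reflections except $s_{n-1}$.

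\textbf{Inductive step.} Assuming $\Phi^{aff}(\Delta_{i+1})\cong\bar{\mathds{1}}_n\langle\chi_{i+1},0\rangle$, the recursion reduces the claim to verifying the single commutation
\[
\Phi(\sigma_i)\,\bar\star\,\bar{\mathds{1}}_n\langle\chi_{i+1},0\rangle\,\bar\star\,\Phi(\sigma_i)\;\cong\;\bar{\mathds{1}}_n\langle\chi_i,0\rangle,
\]
which expresses that the crossing $\sigma_i$ implements the simple transposition $s_i=(i,i+1)$ on the diagonal torus characters. Using the compatibility of the induction functor $\overline{\ind}_k$ with convolution (Section~\ref{sec:induction-functors}) and the fact that the twist $\langle\chi_{i+1},0\rangle$ involves only the $i$-th and $(i+1)$-th torus factors, this reduces to its two-strand incarnation
\[
\bar{\calC}_+\,\bar\star\,\bar{\mathds{1}}_2\langle\chi_2,0\rangle\,\bar\star\,\bar{\calC}_+\;\cong\;\bar{\mathds{1}}_2\langle\chi_1,0\rangle.
\]

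\textbf{Main obstacle.} The hard part is the two-strand convolution. The approach I would take is an explicit Koszul computation on the triple-convolution space $\cx_{conv}^\circ$, in the spirit of Section~\ref{sec:computation}: assemble $\pi_{12}^{\circ*}(\bar{\calC}_+)\otimes\pi_{23}^{\circ*}(\bar{\calC}_+)$ with the middle $\chi_2$-twist inserted, take Chevalley-Eilenberg invariants for the middle copy of $\frn$, and push forward along $\pi_{13}^\circ$, simplifying by successive row reductions and Kn\"orrer-type cancellations. Unlike the blob computation of \eqref{eq:blb-sqr}, where two summands survive, here the invertibility relation $\bar{\calC}_+\bar\star\bar{\calC}_-=\bar{\mathds{1}}_2$ combined with the twist formula $\bar{\calC}_-=\bar{\calC}_+\langle\chi_2,-\chi_1\rangle$ forces exactly one summand to survive, namely the twisted unit $\bar{\mathds{1}}_2\langle\chi_1,0\rangle$ reflecting the Weyl transposition $s_1\cdot\chi_2=\chi_1$. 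Verifying that no additional homological degrees survive the reduction and that the surviving grading is precisely $\langle\chi_1,0\rangle$ (rather than some other weight related by the action on the Borel) is the technical core of the argument.
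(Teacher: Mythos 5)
The plan is structurally sound and almost certainly follows the same strategy as the cited reference: downward induction from $i=n$ via the Bernstein--Lusztig recursion $\Delta_i=\sigma_i\Delta_{i+1}\sigma_i$, with the two-strand convolution being the technical heart. Two points deserve more care, and one part of your discussion can actually be streamlined.

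First, the base case. You assert $\Phi^{\aff}(\Delta_n)\cong\bar{\mathds{1}}_n\langle\chi_n,0\rangle$ ``by construction.'' The present paper does not define $\Phi^{\aff}(\Delta_n)$ --- it cites \cite{OblomkovRozansky17} for the whole construction --- so this is a guess, albeit a correct-looking one given the consistency requirement $j_{st}^*\circ\Phi^{\aff}=\Phi\circ\mathfrak{fgt}$ together with the remark that $j_{st}^*$ trivializes the twists $\langle\chi_i,0\rangle$. In a self-contained writeup you would need to actually exhibit the generator.

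Second, your ``main obstacle'' section is internally in tension: you simultaneously propose an explicit Koszul reduction on $\cx^\circ_{conv}$ \emph{and} claim the answer is forced by the invertibility relation together with $\bar{\calC}_-=\bar{\calC}_+\langle\chi_2,-\chi_1\rangle$. The second option is in fact a complete proof and renders the Koszul computation unnecessary, provided you also have the left-inverse relation $\bar{\calC}_-\bar\star\bar{\calC}_+=\bar{\mathds{1}}_2$ (the paper states only the right inverse $\calC_+\star\calC_-=\mathds{1}_2$, but both must hold for a group homomorphism) and the formal ``twist moves across the middle torus'' rule $\calf\bar\star(\calg\langle\chi,0\rangle)=(\calf\langle 0,\chi\rangle)\bar\star\calg$, which follows from the definition of $\bar\star$ since the middle-$T$ twists cancel. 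The chain is then purely formal:
\[
\bar{\calC}_+\bar\star\bar{\mathds{1}}_2\langle\chi_2,0\rangle\bar\star\bar{\calC}_+
=\bar{\calC}_+\bar\star\bigl(\bar{\calC}_+\langle\chi_2,0\rangle\bigr)
=\bigl(\bar{\calC}_+\langle 0,\chi_2\rangle\bigr)\bar\star\bar{\calC}_+
=\bigl(\bar{\calC}_-\langle\chi_1,0\rangle\bigr)\bar\star\bar{\calC}_+
=\bigl(\bar{\calC}_-\bar\star\bar{\calC}_+\bigr)\langle\chi_1,0\rangle
=\bar{\mathds{1}}_2\langle\chi_1,0\rangle,
\]
where the third equality uses $\bar{\calC}_-=\bar{\calC}_+\langle-\chi_1,\chi_2\rangle$ rewritten as $\bar{\calC}_+\langle 0,\chi_2\rangle=\bar{\calC}_-\langle\chi_1,0\rangle$. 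Your instinct to worry about which weight survives is nonetheless well placed: the sign in the twist-transfer rule is exactly what produces $\chi_1$ rather than something like $-\chi_2$, so one does have to pin down the conventions for the $B^2$-equivariant structure and the $T$-invariants in $\bar\star$ precisely --- there is no room for a sign error. Finally, the reduction from $n$ strands to two via $\overline{\ind}_k$ also needs a small check that the induction functor carries the rank-$2$ twist $\langle\chi_2,0\rangle$ to $\langle\chi_{i+1},0\rangle$ in rank $n$; this is true for the standard embedding of $T_2$ as the $(i,i+1)$-block of $T_n$, but it should be said.
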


In particular, we show in \cite{OblomkovRozansky17} that the pull-back \(j_{st}^*\) sends \(\Phi^{aff}(1)\langle\chi_i,0\rangle\)
to the trivial line bundle. Since \(\delta_i=\fgt(\Delta_i)\) we conclude the following 

\begin{cor}\label{cor:pos}
  For any \(\beta\in\Br_n\)   and \(b_i,M\in \zz\) we have
  \[\Phi(\beta\cdot\prod_{i=2}^n\delta_i^{b_i})=\Phi(\beta)\langle \vec{b},0\rangle,\]
  \[\CH^{\rst}_{\rloc}(\Phi(\beta\cdot \mathrm{FT}^M))=\CH^{\rst}_{\rloc}(\Phi(\beta))\otimes \det(\calB)^{\otimes M},\]
  where \(\mathrm{FT}=\prod_{i=2}^n\delta_i\) is the full-twist braid.
\end{cor}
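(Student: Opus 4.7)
The plan is to derive both identities directly from Theorem~\ref{thm:kill-JM}, using the functoriality of $\Phi$ and the intertwining relation $j_{st}^*\circ \Phi^{aff}=\Phi\circ\fgt$.

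First I would establish the single-generator statement $\Phi(\delta_i)=\Phi(1)\langle\chi_i,0\rangle$. Since $\delta_i=\fgt(\Delta_i)$, the intertwining relation gives $\Phi(\delta_i)=j_{st}^*\Phi^{aff}(\Delta_i)$; substituting Theorem~\ref{thm:kill-JM} and using that $j_{st}^*\Phi^{aff}(1)=\Phi(\fgt(1))=\Phi(1)$ is the unit of the stable convolution algebra yields the claim (the $B^2$-twist commutes with the $B^2$-equivariant pull-back $j_{st}^*$). Because the $\delta_i$ mutually commute in $\Br_n$ and $\Phi$ is a monoidal homomorphism, iterating and pulling each character shift of the unit through the convolution (a twisted unit is absorbed into any factor of $\bar{\star}$) gives
\[\Phi(\beta\cdot\textstyle\prod_{i=2}^n\delta_i^{b_i})=\Phi(\beta)\,\bar{\star}\,\textstyle\prod_i\Phi(\delta_i)^{b_i}=\Phi(\beta)\langle\vec{b},0\rangle,\]
which is the first identity.

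For the second identity I would specialize to $\vec{b}=(M,\dots,M)$ so that $\prod_i\delta_i^{M}=\mathrm{FT}^M$, and apply $\CH^{\rst}_{\rloc}$ to both sides of the first identity. The remaining task is to identify $\CH^{\rst}_{\rloc}(\calF\langle M\textstyle\sum_{i=2}^n\chi_i,0\rangle)$ with $\CH^{\rst}_{\rloc}(\calF)\otimes \det(\calB)^{\otimes M}$ for $\calF=\Phi(\beta)$. I would verify this by tracking the $B$-equivariant structure through the three ingredients of the Chern functor: the linear Koszul duality $\mathrm{KSZ}_\frg$ is $B$-equivariant, the localization $\mathrm{loc}^{\rst}$ is an equivalence by Proposition~\ref{prop:loc-iso}, and the Fourier--Mukai construction \eqref{eq:CH} with kernel $\mathrm{K}_{\CH}$ takes invariants under the diagonal $B$. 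Under this descent the tautological $T$-character $\chi_i$ corresponds to an eigenline of $\calB$ on $\Hilb_n(\CC^2)$, and the product of all such characters recovers $\det(\calB)$.

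The main subtlety is that the sum of characters appearing from the JM side ranges only over $i=2,\dots,n$, whereas one might expect $\det(\calB)$ to require all of $\chi_1,\dots,\chi_n$. This is precisely where the stable locus $\bar{\cx}^{st}$ does its work: as flagged at the end of Section~\ref{sec:sample-computation}, passage to $\bar{\cx}^{st}$ trivializes the equivariant shift by $\chi_1$, so that $\textstyle\sum_{i=2}^n\chi_i$ and $\textstyle\sum_{i=1}^n\chi_i$ define the same line bundle after descent, namely $\det(\calB)$. Once this stability-induced normalization is made precise, the second identity follows from the first by applying $\CH^{\rst}_{\rloc}$, and no further input beyond Theorem~\ref{thm:kill-JM} and the formal properties of the Chern functor is needed.
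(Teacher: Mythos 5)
Your proof follows exactly the paper's route: the corollary is stated in the paper immediately after Theorem~\ref{thm:kill-JM} with essentially the one-line justification ``since \(\delta_i=\fgt(\Delta_i)\)'', and you correctly unpack this via the intertwining relation \(j_{st}^*\circ\Phi^{aff}=\Phi\circ\fgt\), the absorption of a twisted unit under \(\bar{\star}\), and functoriality of \(\CH^{\rst}_{\rloc}\). The structure is right and the first identity is fully justified.

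However, there is an indexing slip in your treatment of the second identity. You assert that the stable locus trivializes the shift by \(\chi_1\). That is not what the paper establishes: it is \(\chi_n\) that becomes trivial after \(j_{st}^*\), precisely because \(\fgt(\Delta_n)=1\) forces \(\Phi(\mathbf{1})=j_{st}^*\Phi^{aff}(\Delta_n)=\Phi(\mathbf{1})\langle\chi_n,0\rangle\); this is what the end of Section~\ref{sec:sample-computation} records for \(n=2\) (it is \(\langle\chi_2,0\rangle\), i.e.\ \(\langle\chi_n,0\rangle\), that dies, not \(\langle\chi_1,0\rangle\)). Your inference that \(\chi_1\) is trivial appears to be reverse-engineered from the index range \(\prod_{i=2}^n\) that appears in the corollary, but that range is itself inconsistent with the earlier definition \(\mathrm{FT}=\prod_{i=1}^{n-1}\delta_i\). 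With the consistent indexing one instead has \(\Phi(\mathrm{FT}^M)=\Phi(\mathbf{1})\langle M\sum_{i=1}^{n-1}\chi_i,0\rangle\), and then adding the trivial \(\chi_n\) gives \(M\sum_{i=1}^{n}\chi_i=M\cdot\det\), which is what descends to \(\det(\calB)^{\otimes M}\). So your key idea---that a trivialized character closes the gap between the visible twist and the full determinant character---is correct, but you have the wrong character and the wrong direction of the correction. I would also note that the passage from ``\(\CH^{\rst}_{\rloc}\) of a \(\det\)-twist'' to ``tensoring with \(\det(\calB)\)'' is asserted rather than checked; a sentence verifying that the character twist \(\langle\det,0\rangle\) is carried through the kernel \(\mathrm{K}_{\CH}\), the Koszul duality \(\mathrm{KSZ}_{\frg}\), and the localization would make this step airtight.
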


Thus we can apply the first formula from the corollary and get an explicit Koszul matrix factorization describing the
desired curved complex:
\[\Phi(\prod_{i=2}^n\delta_i^{b_i})=\calC_\parallel\langle \vec{b},0\rangle. \]

It is much harder though to compute Chern functor \(\CH(\calC_\parallel\langle \vec{b},0\rangle)\). It is
expected \cite{GorskyNegutRasmussen16} that \(\CH(\calC_\parallel)\) is a celebrated Procesi vector bundle and
finding an explicit description for this vector bundle is notoriously hard \cite{Haiman02}. So at the moment we do not
have an explicit statement for \(\CH(\calC_\parallel\langle \vec{b},0\rangle)\) but we believe the following weaker conjecture
could be proved by inductive argument from the work of Haiman.

\begin{conj}\cite{OblomkovRozansky18c} There is \(N>0\) such that for any \(\vec{a}\) such that \(a_{i+1}-a_i>N \) the two periodic
  complex \(\CH^{\rst}_{\rloc}(\calC_\parallel\langle\vec{a},0\rangle)\) is homotopy equivalent to the sheaf concentrated in even homological
  degree.
\end{conj}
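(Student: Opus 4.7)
The plan is to reduce the conjecture to a cohomology-vanishing statement for a twist of $\CH^{\rst}_{\rloc}(\calC_\parallel)$ by tautological line bundles on $\Hilb_n(\CC^2)$, and then attack this vanishing along the lines of Haiman's proof of the $n!$ theorem.

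\emph{Step 1: Translating the character shifts.} First I would use Corollary~\ref{cor:pos}, which identifies $\calC_\parallel\langle\vec{a},0\rangle$ with $\Phi(\prod_i\delta_i^{a_i})$. A direct inspection of the construction \eqref{eq:CH} of $\CH$ shows that the Borel character shift $\langle\chi_i,0\rangle$ passes through $f_\Delta^*$ and then through the Chevalley-Eilenberg invariants and $\pi_{\Dr,*}$ to produce tensoring by a tautological line bundle on $\Hilb_n(\CC^2)$: if $\calL_i$ denotes the line bundle descended along the Borel quotient from the character $\chi_i$, I expect
\[
\CH^{\rst}_{\rloc}(\calC_\parallel\langle\vec{a},0\rangle)\;\simeq\;\CH^{\rst}_{\rloc}(\calC_\parallel)\otimes \bigotimes_{i=1}^n \calL_i^{\otimes a_i}.
\]
The assumption $a_{i+1}-a_i\gg 0$ then translates into $\calL_{\vec{a}}:=\bigotimes_i \calL_i^{\otimes a_i}$ being sufficiently ample for the Hilbert-to-Chow morphism.

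\emph{Step 2: Reduction to the Procesi bundle picture.} Next I would invoke the expected identification $\CH^{\rst}_{\rloc}(\calC_\parallel)\simeq P$ with the Procesi bundle on $\Hilb_n(\CC^2)$, which is anticipated in \cite{GorskyNegutRasmussen16} and consistent with Theorem~\ref{thm:main}. Modulo this, $\CH^{\rst}_{\rloc}(\calC_\parallel\langle\vec{a},0\rangle)\simeq P\otimes\calL_{\vec{a}}$, which is a genuine coherent sheaf in even degree; the only issue is that $\CH^{\rst}_{\rloc}$ is a priori represented by a two-periodic complex, so one must check that no odd-degree contribution arises from the resolution used to compute it. This is precisely a Serre-type vanishing for $P\otimes \calL_{\vec{a}}$, which holds once $\calL_{\vec{a}}$ is relatively ample over $\Sym^n(\CC^2)$.

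\emph{Step 3: Inductive route and the main obstacle.} To avoid relying on the full Procesi-bundle identification, I would instead argue by induction on $n$ using the nested projection $\pi\colon \FHilb_n^{free}\to \CC\times\FHilb_{n-1}^{free}$ from Section~\ref{sec:second-markov-move}, whose fibers are $\PP^{n-1}$. The matrix factorization $\calC_\parallel\langle\vec{a},0\rangle$ admits a Koszul presentation along the lines of the one appearing in the proof of the second Markov move, and sufficient positivity of $\vec{a}$ ensures that the odd-parity pieces of this Koszul complex land in $\pi_*$-acyclic line bundles on the $\PP^{n-1}$-fibers; only the even-parity pieces survive the pushforward. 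This parallels the parity vanishing exploited for Soergel bimodules in \cite{EliasHogancamp17}. The main obstacle is a rigorous version of either route: the Procesi-bundle identification is a matrix-factorization analogue of the $n!$ theorem and requires transporting Haiman's polygraph and isospectral Hilbert scheme machinery into the equivariant matrix factorization category, while the inductive route requires a precise understanding of how the Koszul differentials of $\calC_\parallel\langle\vec{a},0\rangle$ interact with the fiberwise cohomology on $\PP^{n-1}$. A secondary obstacle is extracting a sharp $N$: ampleness bounds give a value that scales with the degrees in Haiman's resolution, and aligning this with the thresholds read off from the character weights of $T_{sc}\times B^2$ in the Koszul factorization demands careful bookkeeping.
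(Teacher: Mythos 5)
This statement is stated as a \emph{conjecture} in the paper (attributed to the unpublished \cite{OblomkovRozansky18c}), and the paper does \emph{not} prove it. The surrounding text explicitly says ``at the moment we do not have an explicit statement for $\CH(\calC_\parallel\langle \vec{a},0\rangle)$'' and then only remarks that ``we believe the following weaker conjecture could be proved by inductive argument from the work of Haiman.'' So there is no paper proof to compare against; your proposal is being measured against an open problem.

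That said, there is a concrete mathematical gap in your Step~1. You write the reduction
\[
\CH^{\rst}_{\rloc}(\calC_\parallel\langle\vec{a},0\rangle)\;\simeq\;\CH^{\rst}_{\rloc}(\calC_\parallel)\otimes \bigotimes_{i=1}^n \calL_i^{\otimes a_i}
\]
in $D^{\rper}_{T_{sc}}(\Hilb_n(\CC^2))$, where $\calL_i$ is supposed to be ``the line bundle descended along the Borel quotient from $\chi_i$.'' But the individual Borel characters $\chi_i$ do not descend to $\Hilb_n(\CC^2)$: they are characters of $B$ (or of the torus $T$), not of $\GL_n$, and the Chern functor has already killed the Borel. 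The line bundles $\calO_k(-1)\leftrightarrow \chi_k$ live on $\FHilb_n^{\mathrm{free}}$, not on $\Hilb_n(\CC^2)$. This is precisely why Corollary~\ref{cor:pos} in the paper is stated only for powers of the full twist: only $\sum_i\chi_i$, i.e.\ the determinant character, descends to the Hilbert scheme as $\det(\calB)$. As a consequence, the ampleness argument in your Step~2 cannot be run directly on $\Hilb_n(\CC^2)$; you would have to either (a) push the twist by $\calL_{\vec a}$ forward along $\FHilb_n^{\mathrm{free}}\to\Hilb_n(\CC^2)$ and track the resulting nontrivial complex, as the paper does at the decategorified level via Negut's residue formula, or (b) re-express $\langle\vec a,0\rangle$ purely through full-twist powers (losing the generality of arbitrary $\vec a$).

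Your Step~3 is closest in spirit to what the authors apparently have in mind (inductive use of Haiman's work), and your Step~2 correctly identifies the expected Procesi-bundle identification as the dominant obstacle, which you rightly flag as itself conjectural. So the outline is reasonable as a research plan, but it is not a proof, the paper contains none, and Step~1 as literally written is incorrect on $\Hilb_n(\CC^2)$.
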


On the other hand \(\det(\calB)\) is an ample line bundle on \(\Hilb_n(\CC^2)\) and hence the assumptions of the theorem~\ref{thm:loc1} and
corollary~\ref{cor:pos} imply that \(\CH^{\rst}_{\rloc}(\calC_\parallel\langle\vec{b})\rangle\)   is homotopy equivalent to the sheaf with no higher homology.
The differential in the complex \(\calC_\parallel\) has \(T_{sc}\) degree \(\mathrm{t}\), respectively by \(\mathrm{t}\)-twisting even component
of \(\calC_\parallel\) we obtain the curved complex \(\calC^{ev}_\parallel\) with \(T_{sc}\)-invariant differential.
From the discussion above we have:
\[H^*(\CH^{\rst}_{\rloc}(\calC_\parallel\langle\vec{a},0\rangle))=H^0(\CH^{\rst}_{\rloc}(\calC^{ev}_\parallel\langle\vec{a},0\rangle))
  =\chi(\CH^{\rst}_{\rloc}(\calC^{ev}_\parallel\langle\vec{a},0\rangle))=\chi(\mathbb{S}_1^{ev}\langle\vec{a},0\rangle),\]
where \(\mathbb{S}_1^{ev}\) 
is the version of \(\mathbb{S}_1\) with \(\mathrm{t}\)-twisted even component. There is well-defined image \(\mathrm{K}(\calC_{\parallel})\) of the
complex inside of the \(T_{sc}\)-equivariant \(K\)-theory. 
Thus the Euler characteristics of the LHS of last formula can be computed within \(K_{T_{sc}}(\FHilb^{free})\) and here we can use
the analog Negut's theorem for the push-forward along the fibers of the projection

\begin{prop} For any rational function \(r(\calL_n)\) with coefficients rational functions of \(\calL_i\), \(i<n\),
  the K-theory push-forward is given by
  \[\pi_*(r(\calL_n))=\int \frac{r(z)}{(1-z^{-1})}\prod_{i=1}\zeta'(\calL_i/z)\frac{dz}{z}\]
  where the contour of integration separates the set \(\mathrm{Poles}(r(z))\bigcup \{0,\infty\}\) from the poles of the rest of the integrant.
\end{prop}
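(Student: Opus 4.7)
The plan is to treat $\pi$ as a $\mathbb{P}^{n-1}$-bundle (by the proposition on the fibres of $\pi$ proved earlier in this section) and reduce the push-forward to a fibrewise computation in equivariant $K$-theory. Since $\mathcal{L}_n = \mathcal{O}_n(-1)$ restricts to $\mathcal{O}_{\mathbb{P}^{n-1}}(-1)$ on each fibre, the class $r(\mathcal{L}_n)$ is well-behaved fibrewise, and by base change we only need to handle a single relative projective bundle. The natural tool for this is Atiyah--Bott--Lefschetz localisation in $T_{sc}$-equivariant $K$-theory on $\mathbb{P}^{n-1}$.

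First I would identify the $T_{sc}$-fixed points of the fibre. The ambient vector space of the relative $\mathbb{P}^{n-1}$ is naturally filtered by the tautological sub-bundles coming from the nested structure, so its $T_{sc}$-weights are exactly the classes $\mathcal{L}_1, \dots, \mathcal{L}_{n-1}$ together with a remaining weight; the fixed points are the $n$ coordinate lines, which I label so that at the $j$-th fixed point one has $\mathcal{L}_n = \mathcal{L}_j$. The relative tangent bundle of $\pi$ at the $j$-th fixed point is the direct sum of one-dimensional representations with weights $\mathcal{L}_i/\mathcal{L}_j$, $i \neq j$, together with the factor coming from the $\CC$-component of the base. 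The $K$-theoretic Atiyah--Bott formula then yields
\begin{equation*}
  \pi_*\bigl(r(\mathcal{L}_n)\bigr) \;=\; \sum_{j} \frac{r(\mathcal{L}_j)}{1 - \mathcal{L}_j^{-1}} \prod_{i \neq j} \frac{1}{1 - \mathcal{L}_i/\mathcal{L}_j} \cdot (\text{correction at } j),
\end{equation*}
where the correction comes from the $T_{sc}$-weights of the normal directions; a careful accounting of the $\CC^*_q$- and $\CC^*_t$-weights of the tautological bundle entries $X_{ij}, Y_{ij}$ shows that this correction assembles to $\prod_{i \neq j} \zeta'(\mathcal{L}_i/\mathcal{L}_j)$ where $\zeta'$ is the $K$-theoretic Euler factor.

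Second, I would rewrite the sum over $j$ as a contour integral using the residue theorem. The meromorphic 1-form
\begin{equation*}
  \omega(z) \;=\; \frac{r(z)}{1 - z^{-1}} \prod_{i=1}^{n-1} \zeta'(\mathcal{L}_i/z)\, \frac{dz}{z}
\end{equation*}
has poles at $z = \mathcal{L}_j$ (coming from the $\zeta'$ factors), at $z = 0$ and $z = \infty$ (from the $z$-dependence in $\zeta'$ and the $(1-z^{-1})^{-1}$ factor), and at the poles of $r(z)$. The contour hypothesis selects exactly the $z = \mathcal{L}_j$ poles, and the residue at each such pole reproduces the $j$-th term of the localisation formula. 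The normalisation factor $(1 - z^{-1})^{-1}$ accounts for the extra $\CC$-factor in the base.

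The main obstacle I anticipate is pinning down the exact form of the Euler factor $\zeta'$ and showing that the contour-integral formula is valid beyond polynomial $r$. For polynomial $r(\mathcal{L}_n)$ the localisation sum is finite and the conversion to a residue integral is formal; for rational $r$ one must verify that the behaviour of $\omega(z)$ at $z = 0, \infty$, controlled by $\zeta'$, is compatible with the contour hypothesis, so that no extra contribution from these points appears. This is essentially a matching of the $T_{sc}$-weights of the relative tangent bundle with the explicit weights appearing in $\zeta'(x) = \frac{(1-x)(1-QTx)}{(1-Qx)(1-Tx)}$ (up to a normalisation that distinguishes $\zeta'$ from $\zeta$), and it reduces to checking a finite list of weight identities using the definitions of $\mathcal{L}_i$ and the $T_{sc}$-action on $\frb \times \frn \times V$.
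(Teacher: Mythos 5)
The paper does not actually prove this proposition: the surrounding text says only that ``here we can use the analog Negut's theorem for the push-forward along the fibers'' and, after the statement, that ``the final step of the proof is a delicate analysis of the iterated residue that was done in the work of Negut'' \cite{Negut15}. So there is no internal argument to compare against; you are in effect asked to reconstruct Negut's theorem, and while your outline is the right kind of argument, it leaves exactly the hard step open.

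Here is the gap. If \(\pi\colon\FHilb^{free}_n\to\cc\times\FHilb^{free}_{n-1}\) were literally a projective bundle \(\PP(E)\) with \(E\) of Chern roots \(\calL_1,\dots,\calL_{n-1},1\) and \(\calL_n=\calO(-1)\), the standard relative pushforward formula in equivariant \(K\)-theory would already give a residue expression, but with the simple factors \((1-\calL_i/z)^{-1}\):
\[\pi_*\bigl(r(\calL_n)\bigr)=\oint\frac{r(z)}{(1-z^{-1})\prod_{i<n}(1-\calL_i/z)}\,\frac{dz}{z}.\]
The statement instead has \(\zeta'(\calL_i/z)\), a ratio of four linear factors carrying \(Q\)- and \(T\)-weights. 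These extra factors are not a ``correction at \(j\)'' to be read off the tangent weights of \(\PP^{n-1}\); they record the \(K\)-theory class of the additional relative structure of \(\FHilb^{free}_n\subset\frb\times\frn\times V\) that the naive projective bundle does not see. Identifying that class --- and hence the exact form of \(\zeta'\) --- is precisely what your sentence ``a careful accounting \dots shows that this correction assembles to \(\prod_{i\ne j}\zeta'(\calL_i/\calL_j)\)'' defers, and it is the entire content of the proposition; without it the plan is plausible but is not a proof. Two smaller remarks: the right mechanism is the relative projective-bundle pushforward formula, not Atiyah--Bott localization on a fibre (over a non-fixed point of the base the fibre has no \(T_{sc}\)-fixed points, and the answer is a class in \(K_{T_{sc}}(\cc\times\FHilb^{free}_{n-1})\), not a number), and since \(\zeta'\) is never defined in the paper the contour/pole bookkeeping in your final step cannot actually be checked, as you correctly flag.
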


The  \(K\)-theory class of the complex \(\CH^{\rst}_{\rloc}(\calC_\parallel^{ev})\) is \(\prod_{1\le i< j\le n}(1-qt\calL_i/\calL_j)\).
Hence we can apply the
formula from the previous proposition iterative to obtain the iterated residue integral formula for the desired
link invariant:
\[\int\dots\int  \prod_i\frac{z_i^{b_i}(1+az_i^{-1})}{1-z^{-1}}\prod_{1\le i<j\le n}\zeta(\frac{z_i}{z_j})\frac{d z_1}{z_1}\dots\frac{d z_n}{z_n}.
\]
The final step of the proof is a delicate analysis of the iterated residue that was done in the work of Negut \cite{Negut15} in the
context of \(K\)-theory of the flag Hilbert scheme.


\begin{thebibliography}{GORS14}
\expandafter\ifx\csname url\endcsname\relax
  \def\url#1{\texttt{#1}}\fi
\expandafter\ifx\csname doi\endcsname\relax
  \def\doi#1{\burlalt{doi:#1}{http://dx.doi.org/#1}}\fi
\expandafter\ifx\csname urlprefix\endcsname\relax\def\urlprefix{URL }\fi
\expandafter\ifx\csname href\endcsname\relax
  \def\href#1#2{#2}\fi
\expandafter\ifx\csname burlalt\endcsname\relax
  \def\burlalt#1#2{\href{#2}{#1}}\fi

\bibitem[AK15a]{ArkhipovKanstrup15}
Sergey Arkhipov and Tina Kanstrup.
\newblock Braid group actions on matrix factorizations, 2015,
  \burlalt{1510.07588}{http://arxiv.org/abs/1510.07588}.

\bibitem[AK15b]{ArkhipovKanstrup15a}
Sergey Arkhipov and Tina Kanstrup.
\newblock {Equivariant Matrix Factorizations and Hamiltonian reduction}.
\newblock {\em Bull. of Korean Math. Soc.}, (5):1803--1825, 2015,
  \burlalt{1510.07472v1}{http://arxiv.org/abs/1510.07472v1}.

\bibitem[CG97]{ChrisGinzburg}
Neil Chris and Victor Ginzburg.
\newblock {\em Representation Theory and Complex Geometry}.
\newblock Birkhauser, 1997.

\bibitem[Dyc11]{Dyckerhoff11}
Tobias Dyckerhoff.
\newblock Compact generators in categories of matrix factorizations.
\newblock {\em Duke Math. J.}, (2):223--274, 2011.

\bibitem[EH16]{EliasHogancamp16}
Ben Elias and Matt Hogancamp.
\newblock On the computation of torus link homology, 2016,
  \burlalt{1603.00407}{http://arxiv.org/abs/1603.00407}.

\bibitem[EH17a]{EliasHogancamp17}
Ben Elias and Matthew Hogancamp.
\newblock {Categorical diagonalization}, 2017,
  \burlalt{1707.04349v1}{http://arxiv.org/abs/1707.04349v1}.

\bibitem[EH17b]{EliasHogancamp18}
Ben Elias and Matthew Hogancamp.
\newblock {Categorical diagonalization of full twists}, 2017,
  \burlalt{1801.00191v1}{http://arxiv.org/abs/1801.00191v1}.

\bibitem[Eis80]{Eisenbud80}
David Eisenbud.
\newblock Homological algebra on a complete intersection, with an application
  to group representations.
\newblock {\em Trans. Amer. Math. Soc.}, (1):35--64, 1980.

\bibitem[GH17]{GorskyHogancamp17}
Eugene Gorsky and Matthew Hogancamp.
\newblock {Hilbert schemes and $y$-ification of Khovanov-Rozansky homology}.
\newblock 2017, \burlalt{1712.03938v1}{http://arxiv.org/abs/1712.03938v1}.

\bibitem[GORS14]{GorskyOblomkovRasmussenShende14}
Eugene Gorsky, Alexei Oblomkov, Jacob Rasmussen, and Vivek Shende.
\newblock Torus knots and the rational {DAHA}.
\newblock {\em Duke Mathematical Journal}, 163:2709--2794, 2014.

\bibitem[GRN16]{GorskyNegutRasmussen16}
Eugene Gorsky, Jacob Rasmussen, and Andrei Negut.
\newblock Flag {Hilbert} schemes, colored projectors and {Khovanov-Rozansky}
  homology, 2016, \burlalt{1608.07308}{http://arxiv.org/abs/1608.07308}.

\bibitem[Hai02]{Haiman02}
Mark Haiman.
\newblock Vanishing theorems and character formulas for the hilbert scheme of
  points in the plane.
\newblock {\em Invent. Math.}, 149(2):371--407, 2002.

\bibitem[HL19]{HalpernLeistner18}
Daniel Halpern-Leistner.
\newblock {\em in preparation}, 2019.

\bibitem[Isi13]{Isik10}
Umut Isik.
\newblock {Equivalence of the Derived Category of a Variety with a Singularity
  Category}.
\newblock {\em IMRN}, (12):2728--2808, 2013,
  \burlalt{1011.1484v1}{http://arxiv.org/abs/1011.1484v1}.

\bibitem[Jon87]{Jones87}
V.~F.~R. Jones.
\newblock Hecke algebra representations of braid groups and link polynomials.
\newblock {\em The Annals of Mathematics}, 126(2):335, Sep 1987.
\newblock \doi{10.2307/1971403}.

\bibitem[Kho00]{Khovanov00}
Mikhail Khovanov.
\newblock A categorification of the {Jones} polynomial.
\newblock {\em Duke Mathematical Journal}, 101(3):359--426, Feb 2000.
\newblock \doi{10.1215/s0012-7094-00-10131-7}.

\bibitem[KR08]{KhovanovRozansky08b}
Mikhail Khovanov and Lev Rozansky.
\newblock Matrix factorizations and link homology {II}.
\newblock {\em Geometry and Topology}, 12:1387--1425, 2008.

\bibitem[KR10]{KapustinRozansky10}
Anton Kapustin and Lev Rozansky.
\newblock Three-dimensional topological field theory and symplectic algebraic
  geometry {II}.
\newblock {\em Communications in Number Theory and Physics}, 4(3):463--549,
  2010.
\newblock \doi{10.4310/cntp.2010.v4.n3.a1}.

\bibitem[KRS09]{KapustinSaulinaRozansky09}
Anton Kapustin, Lev Rozansky, and Natalia Saulina.
\newblock Three-dimensional topological field theory and symplectic algebraic
  geometry {I}.
\newblock {\em Nuclear Physics B}, 816(3):295--355, Aug 2009.
\newblock \doi{10.1016/j.nuclphysb.2009.01.027}.

\bibitem[Neg15]{Negut15}
A.~Negut.
\newblock Moduli of flags of sheaves and their {K}-theory.
\newblock {\em Algebraic Geometry}, 2(1):19--43, Mar 2015.
\newblock \doi{10.14231/ag-2015-002}.

\bibitem[OR17]{OblomkovRozansky17a}
Alexei Oblomkov and Lev Rozansky.
\newblock {HOMFLYPT homology of Coxeter links}, 2017,
  \burlalt{1706.00124v1}{http://arxiv.org/abs/1706.00124v1}.

\bibitem[OR18a]{OblomkovRozansky18b}
Alexei Oblomkov and Lev Rozansky.
\newblock {3D TQFT and HOMFLYPT homology}, 2018,
  \burlalt{1812.06340v1}{http://arxiv.org/abs/1812.06340v1}.

\bibitem[OR18b]{OblomkovRozansky18}
Alexei Oblomkov and Lev Rozansky.
\newblock {A categorification of a cyclotomic {Hecke} algebra}, 2018,
  \burlalt{1801.06201v1}{http://arxiv.org/abs/1801.06201v1}.

\bibitem[OR18c]{OblomkovRozansky17}
Alexei Oblomkov and Lev Rozansky.
\newblock Affine braid group, {JM} elements and knot homology.
\newblock {\em Transformation Groups}, Jan 2018.
\newblock \doi{10.1007/s00031-018-9478-5}.

\bibitem[OR18d]{OblomkovRozansky18a}
Alexei Oblomkov and Lev Rozansky.
\newblock {Categorical Chern character and braid groups}, 2018,
  \burlalt{1811.03257v1}{http://arxiv.org/abs/1811.03257v1}.

\bibitem[OR18e]{OblomkovRozansky16}
Alexei Oblomkov and Lev Rozansky.
\newblock Knot homology and sheaves on the {Hilbert} scheme of points on the
  plane.
\newblock {\em Selecta Mathematica}, 24(3):2351--2454, Jan 2018.
\newblock \doi{10.1007/s00029-017-0385-8}.

\bibitem[OR18f]{OblomkovRozansky18c}
Alexei Oblomkov and Lev Rozasky.
\newblock Categorical {Chern} character and {Hall} algebras.
\newblock {\em in preparation}, 2018.

\bibitem[Orl04]{Orlov04}
Dmitri Orlov.
\newblock Triangulated categories of singularities and {D}-branes in
  {Landau-Ginzburg} models.
\newblock {\em Proc. Steklov Inst. Math.}, 246(3):227--248, 2004.

\bibitem[Orl09]{Orlov09}
Dmitri Orlov.
\newblock Derived categories of coherent sheaves and triangulated categories of
  singularities.
\newblock {\em Algebra, Arithmetic, and Geometry}, pages 503--531, 2009.
\newblock \doi{10.1007/978-0-8176-4747-6_16}.

\bibitem[Orl11]{Orlov12}
Dmitri Orlov.
\newblock Matrix factorizations for nonaffine {LG-}models.
\newblock {\em Mathematische Annalen}, 353(1):95--108, May 2011.
\newblock \doi{10.1007/s00208-011-0676-x}.

\bibitem[ORS18]{OblomkovRasmussenShende12}
Alexei Oblomkov, Jacob Rasmussen, and Vivek Shende.
\newblock The {Hilbert} scheme of a plane curve singularity and the {HOMFLY}
  homology of its link.
\newblock {\em Geometry and Topology}, 22(2):645--691, Jan 2018.
\newblock \doi{10.2140/gt.2018.22.645}.

\bibitem[PV11]{PolishchukVaintrob11}
Alexander Polishchuk and Arkady Vaintrob.
\newblock Matrix factorizations and singularity categories for stacks.
\newblock {\em Annales de l’institut Fourier}, 61(7):2609--2642, 2011.
\newblock \doi{10.5802/aif.2788}.

\bibitem[Soe01]{Soergel00}
Wolfgang Soergel.
\newblock Langlands' philosophy and {Koszul} duality.
\newblock In {\em Algebra-Representation Theory (Constanta,2000)}, pages
  379--414, 2001.

\end{thebibliography}
\end{document}